\documentclass[a4paper,leqno]{article}

\usepackage[T1]{fontenc}
\usepackage[latin1]{inputenc}
\usepackage{mathtools,amssymb,amsthm,fullpage}
\mathtoolsset{mathic}
\usepackage{lmodern}
\usepackage{eucal}
\usepackage{hyperref}
\usepackage{microtype}
\usepackage{cleveref}
\usepackage{color}
\newlength{\alphabet}
\settowidth{\alphabet}{\normalfont abcdefghijklmnopqrstuvwxyz}
\usepackage[textwidth=3\alphabet]{geometry}

\theoremstyle{plain}
\newtheorem{thm}{Theorem}[subsection]
\newtheorem{prop}[thm]{Proposition}
\newtheorem{coro}[thm]{Corollary}
\newtheorem{lem}[thm]{Lemma}
\newtheorem{ex}[thm]{Example}

\newcommand{\R}{\mathbb{R}}
\newcommand{\Z}{\mathbb{Z}}

\renewcommand{\H}{\mathbb{H}}
\newcommand{\C}{\mathbb{C}}

\newcommand{\Q}{\mathbb{Q}}
\newcommand*{\rom}[1]{\romannumeral}

\newcommand{\rot}{\mathrm{rot}}

\newcommand{\bma}{\begin{pmatrix}}
\newcommand{\ema}{\end{pmatrix}}
\newcommand{\LM}{\textrm{\rm LM}}

\providecommand{\msc}[1]{{\noindent\small\textbf{Mathematics Subject Classification (2020)} --- #1}}
\providecommand{\keywords}[1]{{\noindent\small\textbf{Keywords} --- #1}}

\DeclareMathOperator{\PSL}{PSL}
\DeclareMathOperator{\GL}{GL}

\DeclareMathOperator{\PSO}{PSO}

\DeclareMathOperator{\RE}{Re}
\DeclareMathOperator{\Tr}{Tr}
\DeclareMathOperator{\tr}{tr}
\DeclareMathOperator{\End}{End}
\DeclareMathOperator{\Vol}{Vol}

\DeclareMathOperator{\Id}{Id}
\DeclareMathOperator{\Ad}{Ad}

\DeclareMathOperator{\spec}{spec}

\DeclareMathOperator{\sign}{sign}

\DeclareMathOperator{\mult}{mult}

\numberwithin{equation}{section}
\numberwithin{thm}{section}

\renewcommand{\thethm}{
\ifnum\value{subsection}=0
\thesection
\else
\thesubsection
	\fi
		.\arabic{thm}
}

\title{Determinants of twisted Laplacians and the twisted Selberg zeta function\footnote{\keywords
hyperbolic orbisurface, twisted Selberg zeta function, non-unitary representation, Selberg trace formula, twisted Laplacian, regularized determinant}
\footnote{\msc Primary: 11M36; Secondary: 11F72, 37C30, 57Q10}}
\author{Jay Jorgenson\footnote{The first named author acknowledges grant support from PSC-CUNY Awards
67415-00 55 and 68462-00 56, which are jointly funded
by the Professional Staff Congress and The City University of New York.}, Lejla Smajlovic, Polyxeni Spilioti\footnote{The third named author was supported by the Hellenic Foundation for Research and Innovation (H.F.R.I.) under the "3rd Call for H.F.R.I. Research Projects to support Faculty Members $\&$ Researchers"(Project Number: 25622).}}

\begin{document}
\maketitle

\begin{abstract}\noindent
Let $X$ be an orbisurface, meaning a compact hyperbolic Riemann surface possibly with a finite number of elliptic points, and let $X_1$ denote
its unit tangent bundle. We consider the twisted Selberg zeta function  $Z(s;\rho)$ associated to a representation
$\rho \colon \pi_1(X_1) \to \GL(V_\rho)$.  We prove a relation between the \textit{twisted Selberg zeta function}  $Z(s;\rho)$
and the regularized  \textit{determinant of the twisted Laplacian} associated to $\rho$.  These results can be viewed as a
generalization of a result due to Sarnak \cite{sarnak1987determinants} who considered
the trivial character. Yet our proof is different, as it is based on evaluation of the Laplace-Mellin type integral transformations. Going further, we explicitly compute the multiplicative constant, which we call the \textit{torsion factor}, and express its dependence on parameters which determine the representation. We study the asymptotic behavior of the constant for a sequence of non-unitary representations introduced by Yamaguchi in \cite{Ya17} and prove that the asymptotic behavior of this constant as the dimension of the representation tends to infinity is the same as the behavior of the higher-dimensional Reidemeister torsion on $X_1$  (up to an absolute constant).

\end{abstract}

\bigskip

\section{Introduction}

\subsection{Regularized determinants as modular forms}

Let $\Lambda:=\{\lambda_{j}\}$ be the set of eigenvalues of a Laplacian $\Delta$ associated to some
geometric setting, such as the action on the space of smooth sections of a vector bundle on
a smooth compact Riemannian manifold.  The associated Hurwitz-type zeta function $\zeta(w,z)$
is (formally) defined as
$$
\zeta(w,z) = \sum\limits_{j}\frac{1}{(\lambda_{j}+w)^{z}}.
$$
Under general circumstances, it can be shown that $\zeta(w,z)$ defines a function of complex
variables $z$ and $w$ which lie in domain contained in a right-half plane, and, furthermore,
$\zeta(w,z)$ admits a meromorphic continuation which is regular at $z=0$.  With that,
one defines the regularized determinant $\det(\Delta +w)$ as
$$
\det(\Delta +w) := \exp\left(-\frac{d}{dz}\zeta(w,z)\bigg|_{z=0}\right).
$$
In a sense, one can view the regularized determinant $\det(\Delta +w)$ as a special value
of a zeta function, which itself is a field of mathematical significance.  However, this is just
the beginning since, for example, the regularized determinant $\det(\Delta +w)$ depends on whatever
moduli parameters are associated to $\Delta$, which introduces a rather promising means by which
one could study modular forms using spectral data.

For example, let $E_{\tau}$ be an elliptic curve with modular parameter $\tau$, and let $\mathcal{L}_{\chi}$ be a flat line bundle associated to the unitary character $\chi$.
Let $\Lambda$ be the set of eigenvalues of the flat Laplacian acting on smooth sections of $\mathcal{L}_{\chi}$.
Then it is shown in \cite{RS73} that $\det(\Delta) := \det(\Delta +w)\vert_{w=0}$ amounts to the Riemann theta
function with parameters $\tau,\chi$; see Theorem 4.1.  Since this result, numerous settings have been studied
where regularized determinants, and more generally analytic torsion, yield holomorphic modular forms.
We refer the reader to \cite{MR09} for specific examples involving Siegel modular forms and the Borcherds's $\Phi$ function,
both cases stemming from work due to Yoshikawa in \cite{Yo99} and \cite{Yo04}. These theorems lie within the vast field
of higher dimenesional Arakelov theory with its deep arithmetic significance.

\subsection{Regularized determinants and the Selberg zeta function}
Another interesting example from \cite{RS73} related spectral determinants to special values of the Selberg zeta function.
Let $X$ be a finite volume, smooth, genus $g$ hyperbolic Riemann surface, and let $\mathcal{L}_{\chi}$ and $\mathcal{L}_{\chi'}$ be
two flat line bundles associated to the unitary characters $\chi$ and $\chi'$ of $\pi_{1}(X)$.  Let $\det(\Delta,\chi)$,
resp. $\det(\Delta,\chi')$ be the spectral determinants of the eigenvalues of the hyperbolic Laplacian acting on smooth
sections of $\mathcal{L}_{\chi}$, resp. $\mathcal{L}_{\chi'}$.  Finally, let $Z(s,\chi)$ be the Selberg
zeta function twisted by the unitary character $\chi$.  Then, in our normalization, Theorem 4.6 of \cite{RS73} states that
\begin{equation}\label{eq:torsion_Selbergzeta}
\det(\Delta,\chi)/\det(\Delta,\chi') = Z(1,\chi)/Z(1,\chi').
\end{equation}
In \cite{Fa81} the author used \eqref{eq:torsion_Selbergzeta} and proved the following fascinating result.  The
spectral determinant $\det(\Delta,\chi)$ admits an analytic continuation from $\chi \in (S^{1})^{2g}$ to $(\mathbf{C}^{*})^{2g}$
whose divisor determines the period matrix of $X$.  Hence, by the Torelli theorem, the extension of $\det(\Delta,\chi)$
to non-unitary characters $\chi$ determines the complex structure of $X$.

In analogy with the elliptic curve setting, it was asked on page 169 of \cite{RS73} if one can express
$\det(\Delta,\chi)$ in terms of Riemann's theta function.  A partial answer to this was given in \cite{Fa81} who related
the divisor of the extension of $\det(\Delta,\chi)$ to non-unitary $\chi$ in terms the divisor of a function involving
the Riemann theta function and certain Pyrm differentials, and the result lies at the center of the analysis
of loc. cit.;  see pages 114-115.  The above mentioned question from \cite{RS73} was answered in it entirety in \cite{Jo91},
where related results from other authors are discussed.

\subsection{Our main result}
In the present article we study regularized determinant $\mathrm{det}(\Delta_{\tau_m,\rho}^{\sharp} +s(s-1))$ associated
to the Bochner-Laplacian $\Delta_{\tau_m,\rho}^{\sharp}$.
Specifically, let $X$ be a compact hyperbolic Riemann surface possibly with a finite number of elliptic points,
and let $X_1$ denote its unit tangent bundle.  Let $Z(s;\rho)$ be the Selberg zeta function associated to a representation
$\rho \colon \pi_1(X_1) \to \GL(V_\rho)$.  \emph{It is not assumed that $\rho$ is unitary.} The image $\rho(u)$ of a loop $u$ in $X_1$ corresponding
to a complete clockwise rotation of unit vectors around the base point in $X$ equals $e^{-i\pi m} \mathrm{Id}_{V_\rho}$ for some $m\in(-1,1]$.
The corresponding
Selberg trace formula is developed in \cite{BFS23}, and in particular the authors state and prove an identity for the associated
heat kernel.  We study the Laplace-Mellin transform of the trace of the heat kernel and prove, in Theorem \ref{thm:main_theorem}
\begin{equation}\label{eq:main_formula}
\mathrm{det}(\Delta_{\tau_m,\rho}^{\sharp} +s(s-1))= Z(s;\rho)Z_I(s,\rho)Z_{\rm ell}(s;\rho)e^{\widetilde C},
\end{equation}
where the functions $Z_{I}(s;\rho)$ and $Z_{\rm ell}(s;\rho)$ stem from the identity and elliptic contributions
to the trace formula.  The constant $\widetilde C$, which we call the \emph{torsion factor}, is given by
\begin{equation}\label{eq. tilde C defn}
\widetilde{C}=-\mathrm{dim}(V_\rho)\chi(X)(2\zeta'(-1)-\log\sqrt{2\pi})+ \sum_{j=1}^{r}\frac{\log\nu_j}{2\nu_j}\sum_{k=1}^{\nu_j-1}\frac{\mathrm{Tr}
  (\rho(\gamma_j)^k) e^{i\pi k m/\nu_j}}{\sin^2\left(\frac{\pi k}{\nu_j}\right)}
\end{equation}
where the sum is taken over representatives $\gamma_j$ of the $r$ inconjugate
elliptic classes with orders $\nu_j$, $\chi(X)$ is the Euler characteristic of $X$, and $\zeta$ denotes the Riemann zeta function.
In the case $\rho$ is the trivial
character, we reprove by different means some of the main results of \cite{sarnak1987determinants}.

The functions $Z_{I}(s;\rho)$ and $Z_{\rm ell}(s;\rho)$ are given explicitly in terms of the classical Gamma function
and Barnes double Gamma function as well as the topological data of $X$ and the representation $\rho$;
see Theorem \ref{thm:main_theorem}.

As an example of our main theorem, we compute the constant $\widetilde{C}$ in setting considered by Yamaguchi in \cite{Ya22}; see Example \ref{ex:Yamaguchi}. In this case, we needed to
compute certain trigonometric sums whose values were not evident, but their evaluations came from the general results
in \cite{JKS24} which themselves are examples of heat kernel analysis on discrete tori.
Moreover, we prove that for a wider class of all $2N$-dimensional representations $\rho_{2N}$ than studied in \cite{Ya17} one has the limiting behavior
$$
\lim_{N\to\infty}\frac{\widetilde{C}}{2N}= -\chi(X)(2\zeta'(-1)-\log\sqrt{2\pi}).
$$
This is reminiscent of the main result of \cite{Ya17} which states that
$$
\lim_{N\to\infty}\frac{\log|\mathrm{Tor}(X_1,\rho_{2N})|}{2N}= -\chi(X)\log 2,
$$
where $\mathrm{Tor}(X_1,\rho_{2N})$ signifies the higher-dimensional Reidemeister torsion (see also \cite{Mu12} and \cite{M-FP14} for a similar result for hyperbolic 3-manifolds). For this reason, we believe that the name torsion factor
for the constant $\widetilde C$ is justified and that this term deserves further analysis, which will be the subject of a consequent paper.

In light of the results from the literature cited above, it is reasonable to expect interesting results
from the study of $\mathrm{det}(\Delta_{\tau_m,\rho}^{\sharp} +s(s-1))$ as a function of $\rho$.  For instance,
because $\mathrm{det}(\Delta_{\tau_m,\rho}^{\sharp} +s(s-1))$ is invariant when replacing $s$ by $1-s$, one
then gets from \eqref{eq:main_formula} the functional equation for the Selberg zeta function $Z(s;\rho)$.  Also,
Dietmar's notation of complete filtration from \cite{De19} seems amenable to the development of Artin-Venkov-Zograf
type factorization formulas in the case when $\rho$ is not irreducible.  The details of these assertions will
be investigated elsewhere.

Additionally, we note that many authors have studied regularized determinants and Selberg zeta functions in the
setting of non-compact quotients of hyperbolic spaces.  In this direction, additional considerations are needed in order
to define the regularized determinant because, for example, the corresponding heat kernel is not trace class.  We anticipate
that one can combine existing results in the literature with the approach presented in this paper and yield
analogous results.

\subsection{Outline of the paper}
The present paper is organized as follows.  In section \ref{sec:prelim} we establish notation and recall necessary background
material.  In section \ref{sec:STF} we recall the trace formula for the Bochner-Laplace operator, as proved in \cite{BFS23}.
In section 4 we apply the Laplace-Mellin transform to the theta function associated to the above mentioned heat kernel, and
we obtain a precise formula and asymptotic behavior of $\log \mathrm{det}(\Delta_{\tau_m,\rho}^{\sharp} +s(s-1))$;
see Lemma \ref{lemma cont of det}.  The Laplace-Mellin transform of the geometric side of the trace formula involves
several computations, most of which are given in section 5.  Our main theorem is proved in section 6.  In section 7
we further study ${\widetilde C}$, which we call the torsion factor, and, we give further evaluations for the
representations considered by Yamaguchi in \ref{ex:Yamaguchi}.  Finally, in section 8, we give specializations of our
main theorem, namely to the setting when there are no elliptic fixed points on $X$ as well as case when $s=1$.


\section{Preliminaries}\label{sec:prelim}

In this section, we introduce our geometrical setting, namely that of a
compact hyperbolic orbisurface $X$, its unit tangent bundles $X_1$ and its fundamental group $\pi_1(X_1)$.
We recall also some representation theory of the universal cover of $\PSL(2,\R)$.
At the end of this section, we recall the Barnes double gamma function, which will be used in
the proof of our main result.  The object is to provide a brief discussion in order to make the
article reasonably self-contained.  Specific references for further details are provided.

\subsection{Compact hyperbolic orbisurfaces}
\label{subsec:Orbi}
Consider the upper half plane
\[
\H^2:=\{z=x+iy:y>0\}.
\]
The group $G=\PSL(2,\R)$ acts on $\H^2$ by fractional linear transformations. This action is transitive and the stabilizer of $i\in\H^2$ is the maximal compact subgroup $K=\PSO(2)$, hence
\begin{equation*}
\H^2\cong G/K.
\end{equation*}
Let $\mathfrak{g}=~\mathfrak{sl}(2,\R)$ be the Lie algebra of $G$, and let $\mathfrak{g}=\mathfrak{k}\oplus\mathfrak{p}$ be the Cartan decomposition of $\mathfrak{g}$ with respect to the maximal compact subgroup $K$.
The restriction of the Killing form on $\mathfrak{g}$ to $\mathfrak{p}$ induces a $G$-invariant metric on $\H^2$ which is a multiple of the Poincar\'{e} metric,
which is
\begin{equation*}
ds^2=\frac{dx^2+dy^2}{y^2}.
\end{equation*}
Let $\Gamma\subseteq G$ be a cocompact Fuchsian group, meaning a discrete subgroup of $G$ such that the quotient $X=\Gamma\backslash \H^2=\Gamma\backslash G/K$ is compact. Then the Poincar\'{e} metric on~$\H^2$ induces a metric on $X$ that turns it into a compact hyperbolic orbisurface. Note that in this case, every non-trivial element in $\Gamma$ is either hyperbolic or elliptic.

Let $\widetilde{G}$ denote the universal cover of $G$ and write~$\widetilde{H}$ for the preimage of a subgroup $H\subseteq G$ under the universal covering map. Consider the following one-parameter families in $G$ (modulo $\pm I_2$):
$$
k_\theta=\begin{pmatrix}\cos\theta&\sin\theta\\-\sin\theta&\cos\theta\end{pmatrix},
\qquad a_t=\begin{pmatrix}e^{\frac{t}{2}}&0\\0&e^{-\frac{t}{2}}\end{pmatrix}, \qquad n_x=\begin{pmatrix}1&x\\0&1\end{pmatrix},
$$
where $\theta,t,x\in\R$. Denote by $\widetilde{k}_\theta$, $\widetilde{a}_t$ and $\widetilde{n}_x$ the unique lifts to the universal cover $\widetilde{G}$, so then
$$ \widetilde{K}=\{\widetilde{k}_\theta:\theta\in\R\}, \qquad \widetilde{A}=\{\widetilde{a}_t:t\in\R\} \qquad \mbox{and} \qquad \widetilde{N}=\{\widetilde{n}_x:x\in\R\} $$
are connected one-parameter subgroups of $\widetilde{G}$. Then the Iwasawa decomposition
$$ \widetilde{G} = \widetilde{N}\widetilde{A}\widetilde{K} $$
holds. Note that the center $\widetilde{Z}$ of $\widetilde{G}$ is given by
$$ \widetilde{Z} = \{\widetilde{k}_\theta:\theta\in\Z\pi\}. $$

Let us define the following two vector fields on $X_1 = \Gamma \backslash G$.
\begin{enumerate}
\item The (generator of the) geodesic flow $\mathfrak X$, which is induced by the infinitesimal action of the group $(a_t)_{t \in \R}$.
\item The rotation in the fiber vector field $\mathfrak X_\rot$, generated by the infinitesimal action of the group $(k_\theta)_{\theta \in \R}$.
\end{enumerate}

\subsection{The relation between $\pi_1(X_1)$ and $\pi_1(X)$}

In this section, we study the relation between $\pi_1(X_1)$ and $\pi_1(X)$.
For further details, we refer the reader to \cite[Section 2]{fried1986fuchsian} and
\cite[Section 1.2]{BFS23}.

The unit tangent bundle of $\H^2$ naturally identifies with $G=\PSL(2,\R)$. Indeed the map
$$
G \to \H^2
\,\,\,\,\,\textrm{\rm given by} \,\,\,\,\,
g \mapsto g\cdot \textrm i
$$
is a fibration whose fiber is a circle $S^1$ of directions of unit tangent vectors.
It follows  that the quotient $X_{1}:=\Gamma \backslash G$ is a compact 3-manifold which we will call the unit tangent bundle of $X$.
It follows that $X_{1}$ is a Seifert fibered 3-dimensional manifold, and this fibration induces the exact sequence
$$
1 \to \Z = \pi_1(\PSO(2)) \to \pi_1(X_1) \to \Gamma= \pi_1(X) \to 1
$$
where $\pi_1(X)$ denotes the orbifold fundamental group of $X$.

A compact orbisurface $X$ can have at most finitely many conical, or elliptic, points.  Let $r$ be the number
of such points, and if $r>0$ we denote these points  $x_1, \ldots, x_r$ if $r>0$.
Upon removing a small disc neighborhoods $D_1, \ldots, D_r$ about each point, one gets a compact surface $\overline X$ whose
boundary $\partial \overline X=\{\partial D_1, \ldots, \partial D_r\}$ is a union of disjoints circles and with fundamental group
$$
\pi_1(\overline X)= \langle a_1, b_1, \ldots, a_g, b_g, c_1, \ldots, c_r \mid \prod_i[a_i,b_i]\prod_j c_j = 1 \rangle.
$$
By capping off the boundary components $\partial D_i$ by orbidiscs, one obtains a presentation for the fundamental group of the orbisurface,
namely that
$$
\pi_1(X)= \langle a_1, b_1, \ldots, a_g, b_g, c_1, \ldots, c_r \mid \prod_i[a_i,b_i]\prod_j c_j = 1, \, c_j^{\nu_j}=1 \rangle,
$$
for some set of non-zero natural integers $\{\nu_j\}$ for $j= 1, \ldots, r$.
These integers are the order of the elliptic conjugacy classes in $\Gamma$ corresponding to the set of elliptic points $\{x_j\}$.

We denote by $\langle u\rangle$ the fundamental group of the fiber $\PSO(2)$ which identifies with the center of $\widetilde{G}$.
Specifically, the generator $u=\widetilde{k}_\pi$ identifies with a loop in $X_1$ corresponding to a complete clockwise rotation of unit vectors around the base point in $X$. It is proved in \cite[Section 2]{fried1986fuchsian} that one has the presentation that
\begin{multline}\label{presentation}
\pi_1(X_1) = \langle a_1, b_1, \ldots, a_g, b_g, c_1, \ldots, c_r, u \mid \\ [u,a_i] =1, [u,b_i]=1, [u,c_j]=1, \,  \prod_i[a_i,b_i]\prod_j c_j = u^{2g-2+r}, \, c_j^{\nu_j}=u \rangle.
\end{multline}

Let $\rho:\pi_1(X_1)\to\GL(V_\rho)$ be a finite-dimensional complex representation of $\pi_1(X_1)$.
As the notation suggests, it is not assumed that $\rho$ is unitary.
Since $u\in\pi_1(X_1)$ is central, $\rho(u)$ commutes with $\rho(\gamma)$ for all $\gamma\in\pi_1(X_1)$.
If $\rho$ is irreducible, $\rho(u)$ must be a scalar multiple of the identity by Schur's Lemma, say $\rho(u)=
\lambda\,I_n$.
Let $N = \operatorname{lcm}(1, \nu_1, \ldots, \nu_r)$
be the least common multiple of $1$ and the orders of the elliptic conjugacy classes.
By definition, the orbifold Euler
characteristic $\chi(X)$ of $X$ is given by
$$ \chi(X) =  2-2g+\sum_{j=1}^r\left(\frac 1 {\nu_j}-1\right) \in \Q. $$
By \cite[Lemma 1.2.1]{BFS23}, we have the following lemma.

\begin{lem}\label{lem:CenterActionIrredPi1X1}
Let $\rho\colon \pi_1(X_1) \to \GL(V)$ be an irreducible representation of dimension $n$. Then $\rho(u) =\lambda\, I_n$ with $\lambda^{Nn\chi(X)}=1$
\end{lem}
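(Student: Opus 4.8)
The plan is to first reduce to a scalar via Schur's Lemma and then extract a cyclotomic condition on that scalar by passing to determinants in the defining relations of \eqref{presentation}. Since $u$ is central in $\pi_1(X_1)$, the matrix $\rho(u)$ commutes with every $\rho(\gamma)$, hence intertwines the irreducible representation $\rho$ with itself; by Schur's Lemma $\rho(u)=\lambda\,I_n$ for some $\lambda\in\C^{*}$ (nonzero because $\rho(u)$ is invertible). This is the easy half, and it remains only to see that $\lambda$ is a root of unity whose order divides $Nn\chi(X)$.

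For the second half I would set $d_j:=\det\rho(c_j)\in\C^{*}$ for $j=1,\dots,r$ and apply $\det$ to the relations $c_j^{\nu_j}=u$ and $\prod_i[a_i,b_i]\prod_j c_j=u^{2g-2+r}$ of \eqref{presentation}. The first gives $d_j^{\nu_j}=\det\rho(u)=\lambda^{n}$. For the second, using that the determinant of any commutator $[A,B]=ABA^{-1}B^{-1}$ equals $1$, one gets $\prod_{j=1}^{r}d_j=\lambda^{n(2g-2+r)}$. These two families of identities are the entire input of the argument; everything else is bookkeeping with exponents, and the point of the least common multiple $N=\operatorname{lcm}(1,\nu_1,\dots,\nu_r)$ is precisely to keep that bookkeeping integral.

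Concretely, raising $\prod_j d_j=\lambda^{n(2g-2+r)}$ to the $N$-th power and substituting $d_j^{N}=(d_j^{\nu_j})^{N/\nu_j}=\lambda^{nN/\nu_j}$ (legitimate since each $N/\nu_j$ is a positive integer) yields
\[
\lambda^{\,nN\sum_{j=1}^{r}1/\nu_j}=\prod_{j=1}^{r}d_j^{\,N}=\lambda^{\,nN(2g-2+r)},
\]
so that $\lambda^{\,nN(\sum_j 1/\nu_j-(2g-2+r))}=1$. Finally, from $\chi(X)=(2-2g-r)+\sum_j 1/\nu_j$ one reads off that $\sum_j 1/\nu_j-(2g-2+r)=\chi(X)$, and since $\nu_j\mid N$ the quantity $N\chi(X)=N(2-2g-r)+\sum_j N/\nu_j$ lies in $\Z$; hence $Nn\chi(X)$ is an integer exponent and $\lambda^{Nn\chi(X)}=1$, as claimed.

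I do not anticipate a genuine obstacle: the proof is Schur's Lemma together with the fact that commutators are $\SL$-valued. The only spot demanding care is to raise to the $N$-th power \emph{before} substituting, so that one never needs to extract a $\nu_j$-th root of $\lambda$, together with the observation that $N\chi(X)\in\Z$ — which is exactly the reason the least common multiple $N$ enters the statement.
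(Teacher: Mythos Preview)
Your argument is correct: Schur's Lemma gives $\rho(u)=\lambda I_n$, and taking determinants in the two families of relations in \eqref{presentation}, then raising to the $N$-th power to clear the $\nu_j$-th roots, yields exactly $\lambda^{Nn\chi(X)}=1$; the check that $N\chi(X)\in\Z$ is also right. The paper does not supply its own proof of this lemma but simply cites \cite[Lemma~1.2.1]{BFS23}, so your write-up fills in the details that are only referenced here; the argument you give is the natural (and presumably the cited) one.
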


\noindent
In other words, $\lambda$ is a root of unity.

For an elliptic element $\gamma_j\in \pi_1(X_1)$ of order $\nu_j=M(\gamma_j)$, it is known that $\rho(\gamma_j^{\nu_j})=
\rho(\gamma_j)^{\nu_j}=\rho(u)=e^{-i\pi m} {\rm Id}_{V_\rho}$ for some rational number $m$;
see \cite{BFS23}, p. 1417 and Section 3.1 below.  Due to periodicity of the exponential function, we can and will assume
throughout this paper we that $m\in(-1,1]$.
Therefore, $\rho(\gamma_j)$ has $n=\dim(V_\rho)$ eigenvalues of the form $\exp\left(-\frac{2\pi i}{\nu_j}(\frac{m}{2} +\alpha_{jp})\right)$
where we may assume that $\alpha_{jp}$ is an integer lying in the set $\{0,1,\ldots,\nu_j-1\}$.

For a non-negative integer $\ell$, let us define quantities
$$
\alpha_j(\ell):=\sum_{p=1}^n \alpha_{jp}(\ell)
\,\,\,\,\,
\text{\rm and}
\,\,\,\,\, \tilde{\alpha}_j(\ell):=\sum_{p=1}^n \tilde{\alpha}_{jp}(\ell),
$$
where $\alpha_{jp}(\ell)$, respectively $\tilde{\alpha}_{jp}(\ell)$, is the residue of $\alpha_{jp}+\ell$, respectively
$-\alpha_{jp}+\ell$, modulo $\nu_j$ with $j=1,\ldots, r$.

Moreover, for $m\in(-1,1]$, with the notation as above let us define
\begin{equation}\label{eq. constant C_m def}
  C_m(j,\ell;\rho):=  -\frac{1}\nu_{j} \sum_{k=1}^{\nu_j-1}\mathrm{Tr}(\rho(\gamma_j)^k)\frac{ie^{i\pi k m/\nu_j}}{2\sin\left(\frac{\pi k}{\nu_j}\right)}e^{-i\pi k (2\ell+1)/\nu_j}
\end{equation}
and
\begin{equation}\label{eq. constant C_m tilde def}
  \tilde C_m(j,\ell;\rho):=  \frac{1}\nu_{j} \sum_{k=1}^{\nu_j-1}\mathrm{Tr}(\rho(\gamma_j)^k)\frac{ie^{i\pi k m/\nu_j}}{2\sin\left(\frac{\pi k}{\nu_j}\right)}e^{i\pi k (2\ell+1)/\nu_j}
\end{equation}

Let us observe that definition of quantities $\alpha(\ell)$ and $\tilde\alpha(\ell)$ is the same as in \cite{FiBook87},
pp. 66--67 with $d=\dim(V_\rho)$, $2k=m$ and the multiplier system $\chi$ of weight $2k$ instead of representation $\rho$. Therefore, one can express those quantities in terms of the action of the representation $\rho$ onto elliptic representatives $\gamma_j$, $j=1,\ldots, r$ as
\begin{equation}\label{eq. alpha l rep}
\alpha_j(\ell)=\frac{1}{2}\dim(V_\rho)(\nu_j-1) +\nu_jC_m(j,\ell;\rho)
\end{equation}
and
\begin{equation}\label{eq. alpha tilde l rep}
\tilde\alpha_j(\ell)=\frac{1}{2}\dim(V_\rho)(\nu_j-1) + \nu_j\tilde C_m(j,\ell;\rho).
\end{equation}
Let us observe here that $C_m(j,\ell;\rho), \tilde C_m(j,\ell;\rho)$ are rational numbers. Trivially, one also has that
\begin{equation}\label{eq. sum of Cm s}
\sum_{\ell=0}^{\nu_j-1}(C_m(j,\ell;\rho) \pm \tilde C_m(j,\ell;\rho) )=0,
\end{equation}
for all $j=1,\ldots, r$.

\subsection{Twisted dynamical zeta functions}

Every closed oriented geodesic $\gamma$ on $X$ lifts canonically to a homotopy class in $\widetilde{\Gamma}=\pi_1(X_1)$,
which we also denote by $\gamma$.  The conjugacy class $[\gamma]$ in $\widetilde{\Gamma}/\widetilde{Z}\simeq\Gamma=\pi_1(X)$
consists of hyperbolic elements.  Conversely, every such conjugacy class of hyperbolic elements contains exactly one
representative of a closed geodesic. A closed geodesic $\gamma$ on $X$ is called \emph{prime} if it cannot be written as
a non-trivial multiple of a shorter closed geodesic coming from $\pi_{1}(X)$.

As above, let $\rho:\widetilde{\Gamma}=\pi_1(X_1)\to\GL(V_\rho)$ be a finite-dimensional, complex representation. For $s\in\C$,
we define the twisted Selberg zeta function
$$
Z(s;\rho) = \prod_{\gamma\text{ prime}}\prod_{k=0}^\infty\det\left(\Id-\rho(\gamma)e^{-(s+k)\ell(\gamma)}\right).
$$
It is shown in \cite[Theorem 3.1]{fedosova2020meromorphic} that $Z(s;\rho)$ converges for $s$ in some right half plane of $\C$ and defines a holomorphic function on this half plane (see also \cite[Section 1.2]{Wo} for the torsion-free case). We note that the logarithmic derivative
$L(s;\rho)$ of $Z(s;\rho)$ is given by
$$ L(s;\rho):= \frac{d}{ds}\log Z(s;\rho) = \sum_\gamma\frac{\ell(\gamma)\tr(\rho(\gamma))}{2n_\Gamma(\gamma)\sinh(\frac{\ell(\gamma)}{2})}e^{-(s-\frac{1}{2})\ell(\gamma)}, $$
where the summation is over \emph{all} closed geodesics.
By \cite[Theorem 4.1.2]{BFS23}, we have the following result.

\begin{thm}[Meromorphic continuation of the Selberg zeta function]
\label{mero}
	The twisted Selberg zeta function $Z(s;\rho)$ has a meromorphic extension to the whole complex plane $\C$ with poles and zeros given by the formal product
	\begin{align*}
		\prod_{j=0}^\infty&\left(s-\frac{1}{2}-i\mu_j\right)\left(s-\frac{1}{2}+i\mu_j\right)\\&
		\times\prod_{n=0}^\infty\left(s+\frac{m}{2}+n\right)^{N_+(m,n)}\left(s-\frac{m}{2}+n\right)^{N_-(m,n)},
	\end{align*}
	where $(\frac{1}{4}+\mu_j^2)_{j\in\Z_{\geq0}}\subseteq\C$ are the eigenvalues of $\Delta_{\tau_m,\rho}^\sharp$, counted with
integral algebraic multiplicity, and
	\begin{align*}
		N_\pm(m,n) &= \frac{\Vol(X)\dim(V_\rho)}{4\pi}(\pm m+2n+1)\\&
		\pm i\sum_{[\gamma]\textup{ ell.}}\frac{\tr\rho(\gamma)}{2M(\gamma)\sin(\theta(\gamma))}e^{\pm2i\theta(\gamma)(n\pm\frac{m}{2}+\frac{1}{2})}.
	\end{align*}
\end{thm}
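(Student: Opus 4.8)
The plan is to run the Selberg trace formula for the Bochner--Laplace operator $\Delta_{\tau_m,\rho}^\sharp$, as established in \cite{BFS23}, with a resolvent-type test function, and to read off the divisor of $Z(s;\rho)$ from the resulting identity for its logarithmic derivative $L(s;\rho)$. The starting point is the formula for $L(s;\rho)$ recorded just above: for $\RE s$ sufficiently large it is a sum over closed geodesics, and that sum is precisely the hyperbolic term of the trace formula evaluated with the point-pair invariant whose spherical (Fourier) transform is $h_s(r)=(2s-1)/\big((s-\tfrac12)^2+r^2\big)$, equivalently $g_s(u)=e^{-(s-1/2)|u|}$. Since $h_s$ decays only like $|r|^{-2}$, I would not apply the trace formula to $h_s$ directly, but to the difference $h_s-h_a$ with a fixed auxiliary parameter $a$, $\RE a\gg 1$; this decays like $|r|^{-4}$, which more than suffices for absolute convergence of all three sides, and the quantities depending on $a$ are explicit and subtracted at the end.

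Next I would record the three contributions to the trace formula for $h_s-h_a$. The spectral side contributes $\sum_j\big(h_s(\mu_j)-h_a(\mu_j)\big)$, and one checks directly that $\sum_j h_s(\mu_j)=\frac{d}{ds}\log\prod_j(s-\tfrac12-i\mu_j)(s-\tfrac12+i\mu_j)$, a series which converges locally uniformly off the set $\{s=\tfrac12\pm i\mu_j\}$ by the Weyl-type bound on the $\mu_j$ from \cite{BFS23}, and which is meromorphic with a simple pole of residue equal to the algebraic multiplicity of the corresponding eigenvalue of $\Delta_{\tau_m,\rho}^\sharp$. The identity contribution is $\dim(V_\rho)\tfrac{\Vol(X)}{4\pi}\int_\R(h_s-h_a)(r)\,d\mu_m(r)$, where $d\mu_m$ is the Plancherel density shifted according to the central character $\rho(u)=e^{-i\pi m}\Id$; evaluating this integral by residues produces an explicit meromorphic function of $s$ whose only poles are simple, at $s=-\tfrac m2-n$ and $s=\tfrac m2-n$ for $n\ge0$, with residues $\tfrac{\Vol(X)\dim(V_\rho)}{4\pi}(\pm m+2n+1)$. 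The elliptic contribution is a finite sum over the inconjugate elliptic classes $[\gamma]$ of terms of the shape $\tfrac{\tr\rho(\gamma)}{2M(\gamma)\sin\theta(\gamma)}\int_\R(h_s-h_a)(r)\,\Phi_{\theta(\gamma),m}(r)\,dr$ for an explicit elementary weight $\Phi_{\theta,m}$; shifting the contour and collecting the residues at the points $r=\pm i(n\pm\tfrac m2+\tfrac12)$ yields, at $s=\mp\tfrac m2-n$, exactly the series $\pm i\sum_{[\gamma]\text{ ell.}}\tfrac{\tr\rho(\gamma)}{2M(\gamma)\sin\theta(\gamma)}e^{\pm2i\theta(\gamma)(n\pm m/2+1/2)}$.

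Adding the three pieces, the trace formula yields an identity expressing $L(s;\rho)-\tfrac{2s-1}{2a-1}L(a;\rho)$, hence $L(s;\rho)$ itself, as an explicit meromorphic function on all of $\C$ whose only singularities are simple poles: at $s=\tfrac12\pm i\mu_j$ with residue the algebraic multiplicity of $\tfrac14+\mu_j^2$, and at $s=-\tfrac m2-n$, $s=\tfrac m2-n$ with residues $N_+(m,n)$ and $N_-(m,n)$, the latter being the sum of the volume term and the elliptic term above. Since $Z(s;\rho)$ is holomorphic and non-vanishing for $\RE s$ large, integrating $L(s;\rho)$ and exponentiating now provides the meromorphic continuation of $Z(s;\rho)$ to $\C$; single-valuedness of the continuation reduces to the residues of $L(s;\rho)$ being integers, which for $N_\pm(m,n)$ one verifies using Gauss--Bonnet ($\Vol(X)=-2\pi\chi(X)$) together with Lemma \ref{lem:CenterActionIrredPi1X1} and the description of the elliptic eigenvalue data in \Cref{subsec:Orbi} (the poles of $Z(s;\rho)$, as opposed to zeros, occur exactly at those trivial points where $N_\pm(m,n)<0$). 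Finally, a zero, resp. pole, of $Z(s;\rho)$ of order $k$ at $s_0$ is equivalent to a simple pole of $L(s;\rho)$ at $s_0$ with residue $k$, resp. $-k$; reading off the residues just computed gives precisely the claimed formal product for the divisor of $Z(s;\rho)$.

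The step I expect to be the main obstacle is the exact evaluation of the elliptic integral together with the accompanying residue bookkeeping: one must keep careful track of the half-integer shift by $m/2$ coming from $\rho(u)=e^{-i\pi m}\Id$ and of the various sign and phase conventions in order to land precisely on the weights $e^{\pm2i\theta(\gamma)(n\pm m/2+1/2)}$ and on the multiplicities $N_\pm(m,n)$. A secondary point requiring care is that, $\rho$ being non-unitary, the $\mu_j$ are complex and the eigenvalues of $\Delta_{\tau_m,\rho}^\sharp$ carry algebraic rather than geometric multiplicities, so one must use the version of the trace formula in \cite{BFS23} which already accommodates this, and invoke the corresponding counting bound to justify the convergence and term-by-term manipulation of the spectral series.
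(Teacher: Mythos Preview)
The paper does not prove this theorem at all: it is quoted verbatim as \cite[Theorem 4.1.2]{BFS23}, with the single line ``By \cite[Theorem 4.1.2]{BFS23}, we have the following result.'' So there is no proof here to compare against, and your proposal is not so much a reconstruction of the paper's argument as a sketch of how one would prove the cited result from \cite{BFS23}.

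That said, your outline is the standard and correct strategy for such statements and is in the spirit of \cite{BFS23}: apply the trace formula of Theorem~\ref{thm:TraceFormula} to a resolvent-type pair $(h_s,g_s)$ (regularized by subtracting $h_a$), identify the hyperbolic side with $L(s;\rho)$, evaluate the identity and elliptic integrals by contour shifting to exhibit $L(s;\rho)$ as an explicit meromorphic function, and then integrate and exponentiate. Your identification of the residues with the $N_\pm(m,n)$ and of the spectral poles with $\tfrac12\pm i\mu_j$ is correct, as is your remark that integrality of the residues (needed for single-valuedness of $Z$) comes from Gauss--Bonnet together with the structure of the elliptic data. The only caveat is that, since the present paper takes this result as input rather than proving it, your write-up should make clear that you are summarizing the argument of \cite{BFS23} rather than something done in this paper.
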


\subsection{Representation theory of $\widetilde{\PSL(2,\R)}$}
\label{subsec:RepTheory}

We recall here the principal series and the (relative) discrete series of the universal covering group of $\PSL(2,\R)$,  following \cite[Section 1]{Hof94}.

Let $\widetilde{M}=\widetilde{Z}$ denote the center of $\widetilde{G}$.
Then $\widetilde{N}\widetilde{A}\widetilde{M}$ is a parabolic subgroup of $\widetilde{G}$. The unitary dual of $\widetilde{K}\simeq\R$ is comprised of the unitary characters $\tau_m$ for $m\in\R$ as defined by
\begin{equation}
	\tau_m(\widetilde{k}_\theta) = e^{im\theta} \qquad (\theta\in\R).\label{eq:DefTauM}
\end{equation}
The restriction $\sigma_\varepsilon$ of $\tau_m$ to $\widetilde{M}$ only depends on $\varepsilon=m+2\Z\in\R/2\Z$.
With this notation, the unitary dual of $\widetilde{M}$ is given by all $\sigma_\varepsilon$ with $\varepsilon\in\R/2\Z$.

For $\sigma=\sigma_\varepsilon$ with $\varepsilon\in\R/2\Z$ and $s\in\C$,
we form the principal series representation $\pi_{\sigma,s}$ of $\widetilde{\PSL(2,\R)}$ on
$$
I_{\sigma,s} = \{f\in C^\infty(\widetilde{G}):f(na_tmg)=\sigma(m)e^{st}f(g)\}
$$
by a right translation action. The $\widetilde{K}$-types in $I_{\sigma,s}$ are spanned by the functions $\phi_m$ with $m+2\Z=\varepsilon$, where
$$
\phi_m(na_tk) = e^{st}\tau_m(k) \qquad (n\in\widetilde{N},t\in\R,k\in\widetilde{K}).
$$
Note that the Casimir element $\Omega=\frac{1}{4}(H^2+2EF+2FE)$ with
$$ H=\begin{pmatrix}1&0\\0&-1\end{pmatrix}, \qquad E=\begin{pmatrix}0&1\\0&0\end{pmatrix}, \qquad F=\begin{pmatrix}0&0\\1&0\end{pmatrix} $$
acts in $\pi_{\sigma,s}$ by $s(s-1)\Id$.

\paragraph{Unitary principal series.} For $s=\frac{1}{2}+i\lambda$, $\lambda\in\R$, the representation $\pi_{\sigma,s}$ extends to a unitary representation on the Hilbert space
$$ \left\{f:\widetilde{G}\to\C:f(na_tmg)=\sigma(m)e^{(i\lambda+\frac{1}{2})t}f(g)
\,\,\,
\textrm{\rm where}
\,\,\,\int_{\widetilde{M}\backslash\widetilde{K}}|f(k)|^2\,dk<\infty\right\},
$$
thus yielding the \emph{unitary principal series}. We denote the distribution character of this representation by $\Theta_{\sigma,\lambda}$.

\paragraph{(Relative) discrete series.} For $m>0$, the $\widetilde{K}$-types $\phi_{m'}$ for $m'\in\pm\{m,m+2,m+4,\ldots\}$ span an $\sl(2,\C)$-invariant subspace of $I_{\sigma_\varepsilon,\frac{m}{2}}$ where $\varepsilon=\pm m+2\Z\in\R/2\Z$, which can be completed to a Hilbert space of a
unitary representation of $\widetilde{G}$ which is called the \emph{(relative) discrete series}. We write $\Theta_{\pm m}$ for the
distribution character of this representation.
For more details about the invariant inner product on the (relative) discrete series,
we refer the reader to \cite[Section 1]{Hof94}.

\subsection{Barnes double gamma function}

In this section, we recall the definition of the Barnes double gamma function and provide some formulas that we will use in our computations.

The Barnes double gamma function is an entire function of order two of a complex variable $s$ and is defined by
$$
G\left(s+1\right)=\left(2\pi\right)^{s/2}\exp\left[-\frac{1}{2}\left[\left(1+\gamma\right)s^{2}+s\right]\right]
\prod_{n=1}^{\infty}\left(1+\dfrac{s}{n}\right)^{n}\exp\left[-s+\frac{s^{2}}{2n}\right]
$$
where $\gamma$ is the Euler constant. The function $G(s+1)$ has a zero of multiplicity $n$ at each point $-n \in \{-1,-2,\dots \}.$

For $s \notin -\mathbb{N}$, we have that (see \cite[p. 114]{FiBook87})
\begin{equation} \label{log der barnes gamma}
\frac{G'(s+1)}{G(s+1)} = \frac{1}{2}\log(2\pi) + \frac{1}{2} - s + s\psi(s),
\end{equation}
where $\psi(s)=\frac{\Gamma'}{\Gamma}(s)$ denotes the digamma function.
For $\RE(s)>0$,  $\log G(s+1)$ admits an asymptotic expansion as $s \rightarrow \infty$, which we cite from
\cite{FL01} or \cite[Lemma 5.1]{AD14}. For every positive integer $n$, we have that
\begin{equation}\label{asmBarnes}
\log G(s+1) = \frac{s^2}{2}\left( \log{s} - \frac{3}{2}\right) - \frac{\log{s}}{12} - s \, \zeta^{\prime}(0) + \zeta^{\prime}(-1) \>- \\
\sum_{k=1}^{n} \frac{B_{2k+2}}{4\,k\,(k+1)\,s^{2k}} +  h_{n+1}(s),
\end{equation}
where $h_{n+1}(s)$ is a holomorphic function in the right half plane $\RE(s)>0$
which satisfies the asymptotic relation $h_{n+1}^{(j)}(s) = O(s^{-2n-2-j})$ as $\RE(s)\to \infty$ for every integer $j\geq 0$, and where the implied constant depends solely on $j$ and $n$.

\section{The trace formula}

\label{sec:STF}
In this section, we introduce the twisted Bochner--Laplacian on sections of certain vector bundles over $X$,
after which we state a trace formula for the corresponding heat operator.

\subsection{Orbifold vector bundles over $X$}
\label{subsec:vect}

Recall that $\widetilde{\Gamma}$ denotes the preimage of $\Gamma\subseteq G$ in $\widetilde{G}$ and $\widetilde{\Gamma}\simeq\pi_1(X_1)$, and consider a finite-dimensional, complex representation $\rho:\widetilde{\Gamma}\to\GL(V_\rho)$. Let
$$ E_\rho = V_\rho\times_{\widetilde{\Gamma}}\widetilde{G}\to\widetilde{\Gamma}\backslash\widetilde{G}=X_1 $$
denote the associated flat vector bundle over $X_1$, meaning
$$
E_\rho = (V_\rho\times\widetilde{G})/_{(\rho(\gamma)v,g)\sim(v,\gamma^{-1}g),\,v\in V_\rho,\gamma\in\widetilde{\Gamma},g\in\widetilde{G}}.
$$
We equip $E_\rho$ with a flat connection $\nabla^{E_\chi}$. In general, this vector bundle does not define a vector bundle over $X=X_1/\widetilde{K}$,
both because $\widetilde{\Gamma}$ and $\widetilde{K}$ contain the center $\widetilde{Z}$ of $\widetilde{G}$ and because
$\rho$ is not necessarily trivial on $\widetilde{Z}$. In order to obtain a vector bundle on $X$, we have to twist this construction by a
character of $\widetilde{K}$ which is compatible with $\rho$ on $\widetilde{Z}$. Therefore, we assume that $\rho(u)=e^{-im\pi} \cdot \Id$
for some $m\in\Q$. Note that this assumption is automatically satisfied if $\rho$ is irreducible by \cref{lem:CenterActionIrredPi1X1}.

Let $\tau=\tau_m:\widetilde{K}\to\operatorname{U}(V_\tau)=\operatorname{U}(1)$ denote the character of $\widetilde{K}$ defined in \cref{eq:DefTauM},
so then $\rho(u)=\tau(u)^{-1}\cdot\Id_{V_\rho}$. We consider the homogeneous vector bundle
$$
E_\tau=\widetilde{G}\times_{\widetilde{K}}V_\tau\to \H=\widetilde{G}/\widetilde{K},
$$
meaning $E_\tau$ is the quotient space defined as
$$
E_\tau = (\widetilde{G}\times V_\tau)/_{(gk,v)\sim(g,\tau(k)v),\,g\in\widetilde{G},k\in\widetilde{K},v\in V_\tau}.
$$
The invariant inner product on $V_\tau$ induces a Hermitian metric and a metric connection $\nabla^{E_\tau}$ on $E_\tau$. By the same reason as above, the bundle $E_\tau$ does not factor through $X=\widetilde{\Gamma}\backslash\widetilde{G}/\widetilde{K}$.

In order to obtain an orbifold vector bundle on $X$ associated with $\rho$, we consider the tensor product representation $V_\tau\otimes V_\rho$ of $\widetilde{K}\times\widetilde{\Gamma}$ and let
$$ E_{\tau,\rho} = \widetilde{G}\times_{\widetilde{K}\times\widetilde{\Gamma}}(V_\tau\otimes V_\rho) \to \widetilde{\Gamma}\backslash\widetilde{G}/\widetilde{K}, $$
where $\widetilde{K}\times\widetilde{\Gamma}$ acts on $\widetilde{G}$ by $g\cdot(k,\gamma)= \gamma^{-1}gk$.  In other words,
$$ E_{\tau,\rho} = (\widetilde{G}\times(V_\tau\otimes V_\rho))/_{(g\cdot(k,\gamma),w)\sim(g,(\tau(k)\otimes\rho(\gamma))w),\,g\in\widetilde{G},k\in\widetilde{K},\gamma\in\widetilde{\Gamma},w\in V_\tau\otimes V_\rho} $$
Note that this action is not faithful, but the elements in $\widetilde{K}\times\widetilde{\Gamma}$ are
acting trivially on $\widetilde{G}$ are of the form $(z,z)$, $z\in\widetilde{Z}$.  Hence, the action also is trivial
on $V_\tau\otimes V_\rho$ by construction. We further remark that this does not define a topological vector bundle in
the usual sense but rather an orbifold vector bundle, because the fiber over a singular point is not a vector space but
instead is the quotient of a vector space by a finite group action. However, the orbisurface $X$ has a finite cover which
is a manifold, and the orbifold vector bundle is induced by a topological vector bundle, in the usual sense, on the cover.
Therefore, we will employ a slight abuse of notation and consider connections, metrics and differential operators on the bundle
by which we mean the
corresponding objects on the finite cover acting on sections which are invariant under the finite group of deck transformations.
A more thorough discussion of orbifold vector bundles can be found in the work of Shen and Yu~\cite{SY22}.
In particular, smooth sections of the bundle $E_{\tau,\rho}$ can be identified with smooth functions $f:\widetilde{G}\to V_\tau\otimes V_\rho$ such that
$$
f(\gamma gk) = \left[\tau(k)^{-1}\otimes\rho(\gamma)\right]f(g) \qquad (g\in\widetilde{G},k\in\widetilde{K},\gamma\in\widetilde{\Gamma}).
$$

Note that if $\rho$ is trivial and $X$ has no singularities, then the bundle $E_{\tau_m,1}\to X$ for $m\in2\Z$ is the bundle of Fourier modes of degree $m$
whose sections are identified with smooth functions $f : X_1 \to \C$ such that
$$ f(x,v) = f (x,R_\theta v)e^{i\pi m\theta} \qquad \mbox{for all }x\in X,v\in T_xX,|v|=1, $$
where $R_\theta$ is the rotation of angle $\theta$ in $T_xX$.

We now define a connection $\nabla^{E_{\tau,\rho}}$ on $E_{\tau,\rho}$ in terms of its associated covariant derivative. For this we identify vector fields $\mathfrak X\in C^\infty(X,TX)$ on $X$ with smooth functions $\mathfrak X:\widetilde{G}\to\mathfrak{p}$ such that $\mathfrak X(\gamma gk)=\Ad(k)|_{\mathfrak{p}}^{-1}\mathfrak X(g)$. Then,
$$
\nabla^{E_{\tau,\rho}}_{\mathfrak X}f(g) = \left.\frac{d}{dt}\right|_{t=0}f(g\exp(t\mathfrak X(g)))
\,\,\,
\textrm{\rm for all} f\in C^\infty(X,E_{\tau,\rho}
\,\,\,
\textrm{\rm and}
\,\,\,
\mathfrak X\in C^\infty(X,TX)
$$
defines a covariant derivative, hence a connection, on $E_{\tau,\rho}$. Note that, in general, there is no
Hermitian metric on $E_{\tau,\rho}$ compatible with the connection since the representation $\rho$ is not necessarily unitary.

\subsection{The twisted Bochner--Laplacian}
\label{subsec:Laplacians}

For a complex vector bundle $E\to X$ with covariant derivative $\nabla^E$, the second covariant derivative $(\nabla^E)^2$ is defined by
$$
(\nabla^E)^2_{\mathfrak X,\mathfrak X'} = \nabla_{\mathfrak X}^E\nabla_{\mathfrak X'}^E-\nabla^E_{\nabla_{\mathfrak X}^{\operatorname{LC}}\mathfrak X'}
\,\,\,
\textrm{\rm for all}
\,\,\,
\mathfrak X,\mathfrak X'\in C^\infty(X,TX),
$$
where $\nabla^{\operatorname{LC}}$ is the Levi--Civita connection on $TX$.
The negative of the trace of the second covariant derivative is the corresponding Bochner--Laplacian, which we write as
$$
\Delta_E = -\tr\left((\nabla^E)^2\right).
$$
For $\rho$ and $\tau$ as above, the twisted Bochner--Laplacian $\Delta_{\tau,\rho}^\sharp$ is defined as the Bochner--Laplacian of
the bundle $E_{\tau,\rho}$, meaning
$$
\Delta_{\tau,\rho}^\sharp = \Delta_{E_{\tau,\rho}} = -\tr\left((\nabla^{E_{\tau,\rho}})^2\right).
$$
Note that the orbisurface $X$ has a finite cover which is a manifold. Therefore, the spectral theory of $\Delta_{\tau,\rho}^\sharp$ on
$X$ is the same as the spectral theory of its lift to the finite cover when acting on sections that are invariant under the action of the finite
group of deck transformations. Hence, by \cite[Theorem 4.3]{Mk}, we have the following properties (see also for the manifold case
the work of M\"{u}ller~\cite{M1}, and for the orbifold case the work of Fedosova~\cite{fedell} and Shen~\cite[Section 7]{Shen2020}).

\begin{enumerate}
\item If we choose a Hermitian metric on $E_{\tau,\rho}$, then $\Delta_{\tau,\rho}^\sharp$ acts in $L^2(X,E_{\tau,\rho})$ with domain $C^\infty(X,E_{\tau,\rho})$. However, it is not a formally a self-adjoint operator in general. Note that while the inner product on $L^2(X,E_{\tau,\rho})$ depends on the chosen inner product on $V_\rho$, the space $L^2(X,E_{\tau,\rho})$ as a topological vector space does not, due
    to the compactness of $X$.
\item The operator $\Delta_{\tau,\rho}^\sharp$ is an elliptic second order differential operator with purely discrete spectrum $\spec(\Delta_{\tau,\rho}^\sharp)\subseteq\C$ consisting of generalized eigenvalues. The spectrum is contained in a
    translate of a positive cone in $\C$. (Here, a positive cone is a cone whose closure is contained in $\{z\in\C:\RE z>0\}\cup\{0\}$.)
    The generalized eigenspaces
$$ \{f\in L^2(X,E_{\tau,\rho}):(\Delta_{\tau,\rho}^\sharp-\mu\Id)^Nf=0\mbox{ for some }N\} $$
are finite-dimensional, contained in $C^\infty(X,E_{\tau,\rho})$, and their direct sum, as $\mu$ runs through
$\spec(\Delta_{\tau,\rho}^\sharp)$, is dense in $L^2(X,E_{\tau,\rho})$.
\end{enumerate}

\subsection{The trace formula}
\label{subsec:pretrace}

The heat operator $e^{-t\Delta_{\tau_m,\rho}^\sharp}$ corresponding to the twisted Laplacian $\Delta_{\tau_m,\rho}^\sharp$ is of trace class,
which follows from the Weyl Law; see \cite[Lemma 2.2]{M1}).  By \cite[equation (5.6)]{M1} the integral kernel of the heat operator is the smooth function
and can be realized as
$$
H_t^{\tau_m,\rho}(g_1,g_2) = \sum_{[\gamma]\subseteq\widetilde{\Gamma}/\widetilde{Z}} \rho(\gamma)\otimes H_t^{\tau_m}(g_2^{-1}\gamma g_1)
\,\,\,
\textrm{\rm for all}
\,\,\,
g_1,g_2\in\widetilde{G},
$$
where the summation is over the conjugacy classes of $\widetilde{\Gamma}/\widetilde{Z}\simeq\Gamma$.  The heat kernel of the Bochner--Laplace operator on $E_{\tau_m}\to\H$ is written as $(g_1,g_2)\mapsto H_t^{\tau_m}(g_2^{-1}g_1)$ with
$$ H_t^{\tau_m}\in(C^\infty(\widetilde{G})\otimes\End(V_{\tau_m}))^{\widetilde{K}\times\widetilde{K}}. $$
Here, $\widetilde{K}\times\widetilde{K}$ acts on $C^\infty(\widetilde{G})$ by left and right translation, on $\End(V_{\tau_m})$ by left and
right composition with $\tau_m$, and on their tensor product by the tensor product of these actions. Note that the center $\widetilde Z$ of
$\widetilde G$ lies in $\widetilde K$, hence each summand is independent on the choice of $\gamma$ relatively to $\widetilde Z$. The trace of $e^{-t\Delta_{\tau_m,\rho}^\sharp}$
can be computed in two different ways, by summing over the generalized eigenvalues of $\Delta_{\tau_m,\rho}^\sharp$ (the spectral side)
and by integrating the heat kernel along the diagonal in $\widetilde{\Gamma}\backslash\widetilde{G}$ (the geometric side). For the spectral side
we denote for an eigenvalue $\mu\in\spec(\Delta_{\tau_m,\rho}^\sharp)$ its algebraic multiplicity by
$$
\mult(\mu;\Delta_{\tau_m,\rho}^\sharp) = \dim\{f\in L^2(X,E_{\tau_m,\rho}):(\Delta_{\tau_m,\rho}^\sharp-\mu\Id)^Nf=0\mbox{ for some }N\}.
$$
Note that $\mult(\mu;\Delta_{\tau_m,\rho}^\sharp)$ is independent of the chosen metric on $E_\rho$. For the geometric side, we use the standard arguments grouping conjugacy classes in $\widetilde{\Gamma}/\widetilde{Z}$; see for example \cite[Section 2]{Wa}. By equating the two expressions, one
obtains the pre-trace formula, which was made rigorous by M\"{u}ller~\cite[Proposition 5.1]{M1} for manifolds and generalized to orbifolds by
Shen~\cite[Theorem 7.1]{Shen2020}.  Specifically,
\begin{align}\label{eq:PreTrace}
	\sum_{\mu\in\spec(\Delta_{\tau_m,\rho}^\sharp)}&\mult(\mu;\Delta_{\tau_m,\rho}^\sharp)e^{-t\mu}\\&\notag
	= \sum_{[\gamma]\subseteq\widetilde{\Gamma}/\widetilde{Z}}\Vol(\widetilde{\Gamma}_\gamma\backslash\widetilde{G}_\gamma)\tr{\rho(\gamma)}\int_{\widetilde{G}_\gamma\backslash\widetilde{G}}\tr H_t^{\tau_m}(g^{-1}\gamma g)\,d\dot{g},
\end{align}
where $\widetilde{G}_\gamma$ resp. $\widetilde{\Gamma}_\gamma$ denotes the centralizer of $\gamma\in\Gamma$ in $\widetilde{G}$ resp. $\widetilde{\Gamma}$, and the summation on the right hand side is over all conjugacy classes $[\gamma]$ of elements in $\widetilde{\Gamma}/\widetilde{Z}\simeq\Gamma$.

Since $\widetilde{\Gamma}/\widetilde{Z}\simeq\Gamma$ consists of the identity element $e$, hyperbolic and elliptic elements, we can
compute the three types of contributions to the right-hand side in \eqref{eq:PreTrace} separately. These computations have been carried
out in \cite[Sections 3.4, 3.5 and 3.6]{BFS23}.
The combination of these computations is given in \cite[Theorem 3.7.1]{BFS23} which is the following trace formula.

\begin{thm}[Selberg trace formula with non-unitary twist]\label{thm:TraceFormula}
	Let $\rho:\widetilde{\Gamma}\to\GL(V_\rho)$ be a finite-dimensional complex representation with $\rho(u)=e^{-im\pi}$, $m\in\R$.
Set
\begin{equation}\label{Aop}
 A_{\tau_m,\rho}^\sharp=\Delta_{\tau_m,\rho}^\sharp-\frac{1}{4}.
\end{equation}
Then for every $t>0$ we have that
	\begin{align}\label{tracefinal}
		& \Tr(e^{-tA_{\tau_m,\rho}^\sharp}) = \sum_{\mu\in\spec(\Delta_{\tau_m,\rho}^\sharp)}\mult(\mu;\Delta_{\tau_m,\rho}^\sharp)e^{-t(\mu-1/4)}
\\\notag
		& \quad=\frac{\Vol(X)\dim(V_\rho)}{4\pi}\Bigg[\int_\R e^{-t\lambda^2}\frac{\lambda\sinh(2\pi\lambda)}{\cosh(2\pi\lambda)+\cos(\pi m)}\,d\lambda\\\notag
		& \hspace{4cm}+ \sum_{\substack{1\leq\ell<|m|\\\ell\textup{ odd}}}(|m|-\ell)e^{(\frac{|m|-\ell}{2})^2t}\Bigg].\\\notag
		& \qquad+\frac{1}{2\sqrt{4\pi t}}\sum_{[\gamma]\textup{ hyp.}}\frac{\ell(\gamma)\tr\rho(\gamma)}{n_\Gamma(\gamma)\sinh(\frac{\ell(\gamma)}{2})}e^{-\frac{\ell(\gamma)^2}{4t}}\\\notag
		& \qquad+\sum_{[\gamma]\textup{ ell.}} \frac{\tr\rho(\gamma)}{4M(\gamma)\sin(\theta(\gamma))}\Bigg[\int_\R e^{-t\lambda^2}\frac{\cosh(2(\pi-\theta(\gamma))\lambda)+e^{i\pi m}\cosh(2\theta(\gamma)\lambda)}{\cosh(2\pi\lambda)+\cos(\pi m)}\,d\lambda\\\notag
		& \hspace{4cm}+2i\sign(m)\sum_{\substack{1\leq\ell<|m|\\\ell\textup{ odd}}}e^{i\sign(m)(|m|-\ell)\theta(\gamma)}e^{(\frac{|m|-\ell}{2})^2t}\Bigg].
	\end{align}
The summation above is over the conjugacy classes $[\gamma]$ of hyperbolic, resp. elliptic, elements in
$\widetilde{\Gamma}/\widetilde{Z}\simeq\Gamma$, and the representative $\gamma\in\widetilde{\Gamma}$ is
chosen such that it is conjugate in $\widetilde{G}$ to $\widetilde{a}_{\ell(\gamma)}$ for $\ell(\gamma)>0$,
resp. to $\widetilde{k}_{\theta(\gamma)}$ for $\theta(\gamma)\in(0,\pi)$.
\end{thm}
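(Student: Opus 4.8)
The plan is to obtain \eqref{tracefinal} from the pre-trace formula \eqref{eq:PreTrace} by evaluating separately the identity, hyperbolic and elliptic contributions to its right-hand side, and then multiplying through by $e^{t/4}$ so as to pass from $\Delta_{\tau_m,\rho}^\sharp$ to $A_{\tau_m,\rho}^\sharp=\Delta_{\tau_m,\rho}^\sharp-\tfrac14$ as in \eqref{Aop}. Since $\widetilde\Gamma/\widetilde Z\simeq\Gamma$ is the disjoint union of $\{e\}$, the hyperbolic classes and the elliptic classes, the sum on the right of \eqref{eq:PreTrace} splits into three pieces; and since $\dim V_{\tau_m}=1$ we may treat $H_t^{\tau_m}$ as the scalar heat kernel of the Bochner--Laplacian on $E_{\tau_m}\to\H$, left and right $\widetilde K$-equivariant for the character $\tau_m$. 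The only analytic input beyond \eqref{eq:PreTrace} is the value of $H_t^{\tau_m}$ at group elements, read off from the inversion formula for the $\tau_m$-spherical (Harish-Chandra) transform on $\widetilde G$: after the shift, the heat kernel of $A_{\tau_m,\rho}^\sharp$ is the inverse transform of $\lambda\mapsto e^{-t\lambda^2}$, with a contribution from the unitary principal series $\pi_{\sigma_\varepsilon,\frac12+i\lambda}$ ($\varepsilon=m+2\Z$) weighted by the $\tau_m$-Plancherel density and a contribution from exactly those (relative) discrete series $\Theta_{\pm m'}$ whose set of $\widetilde K$-types contains $\tau_m$, i.e. $m'=|m|-\ell$ with $\ell$ odd and $1\le\ell<|m|$.

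For the identity contribution one puts $\gamma=e$, so $\tr\rho(e)=\dim V_\rho$, $\Vol(\widetilde\Gamma_e\backslash\widetilde G_e)=\Vol(\widetilde\Gamma\backslash\widetilde G)$, and the orbital integral collapses to the on-diagonal value $H_t^{\tau_m}(e)$. Inserting the inversion formula and using that the Casimir acts on $\pi_{\sigma_\varepsilon,\frac12+i\lambda}$ by $-\tfrac14-\lambda^2$ (so that, in the normalization of \eqref{Aop}, $A_{\tau_m,\rho}^\sharp$ acts by $\lambda^2$), the principal-series part of $H_t^{\tau_m}(e)$ produces $\frac1{4\pi}\int_\R e^{-t\lambda^2}\frac{\lambda\sinh(2\pi\lambda)}{\cosh(2\pi\lambda)+\cos(\pi m)}\,d\lambda$ --- the weight being exactly the $\tau_m$-Plancherel density, which for $m=0$ degenerates to $\lambda\tanh(\pi\lambda)$ --- while the discrete-series part produces the finite sum $\frac1{4\pi}\sum_{1\le\ell<|m|,\ \ell\text{ odd}}(|m|-\ell)e^{(\frac{|m|-\ell}{2})^2t}$, the exponents being the negatives of the $A_{\tau_m,\rho}^\sharp$-eigenvalues of the discrete series involved. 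Multiplying by $\Vol(X)\dim V_\rho$ gives the first bracket of \eqref{tracefinal}. For a hyperbolic class, take the representative $\gamma$ conjugate in $\widetilde G$ to $\widetilde a_{\ell(\gamma)}$, whose centralizer is $\widetilde A\widetilde Z$; then $\Vol(\widetilde\Gamma_\gamma\backslash\widetilde G_\gamma)$ supplies the factor $\ell(\gamma)/n_\Gamma(\gamma)$, and parametrizing $\widetilde A\backslash\widetilde G$ by Iwasawa $\widetilde N\widetilde K$-coordinates the $\widetilde K$-part conjugates away against the $\tau_m$-equivariance (the phases cancel in $k^{-1}(\cdot)k$ because $\dim V_{\tau_m}=1$) while the $\widetilde A$-part commutes with $\gamma$, so the orbital integral reduces to a $\widetilde N$-integral. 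This is the usual Abel/horocyclic-transform computation, unaffected by the twist, and after the $e^{t/4}$-shift it yields $\frac1{2\sqrt{4\pi t}}\cdot\frac1{\sinh(\ell(\gamma)/2)}e^{-\ell(\gamma)^2/4t}$; weighting by $\tr\rho(\gamma)$ and summing over the hyperbolic classes gives the hyperbolic line of \eqref{tracefinal}.

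The elliptic contribution is the crux and the main obstacle. For an elliptic class take $\gamma$ conjugate to $\widetilde k_{\theta(\gamma)}$ with $\theta(\gamma)\in(0,\pi)$; its centralizer is a $\widetilde G$-conjugate of $\widetilde K\widetilde Z$, and $\Vol(\widetilde\Gamma_\gamma\backslash\widetilde G_\gamma)$ contributes $1/M(\gamma)$. Identifying $\widetilde G_\gamma\backslash\widetilde G\cong\H$ by geodesic polar coordinates centred at the fixed point of $\gamma$, the angular variable is absorbed into the volume normalization and --- since $\widetilde K$ is abelian, so the conjugated element depends only on the base point --- the orbital integral becomes a single radial integral, essentially $\int_0^\infty H_t^{\tau_m}(\widetilde a_r\widetilde k_{\theta(\gamma)}\widetilde a_{-r})\sinh r\,dr$. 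Substituting the inversion-formula expression for $H_t^{\tau_m}$ --- which now requires the explicit values at elliptic elements of the $\tau_m$-spherical functions on $\widetilde G$, i.e. conical (Legendre-type) functions carrying the parameter $\varepsilon=m+2\Z$ --- turns this into a one-dimensional integral in the spectral parameter, which one evaluates by shifting the contour and collecting residues. After the $e^{t/4}$-shift this produces the factor $\frac1{\sin\theta(\gamma)}$ times the principal-series integral $\int_\R e^{-t\lambda^2}\frac{\cosh(2(\pi-\theta(\gamma))\lambda)+e^{i\pi m}\cosh(2\theta(\gamma)\lambda)}{\cosh(2\pi\lambda)+\cos(\pi m)}\,d\lambda$, together with, when $|m|>1$, the finite sum $2i\sign(m)\sum_{1\le\ell<|m|,\ \ell\text{ odd}}e^{i\sign(m)(|m|-\ell)\theta(\gamma)}e^{(\frac{|m|-\ell}{2})^2t}$ coming from the discrete-series characters $\Theta_{\pm m'}$ at $\widetilde k_{\theta(\gamma)}$ (which are of Weyl-character type, of the shape $e^{\pm im'\theta}/(2i\sin\theta)$). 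Weighting by $\tr\rho(\gamma)/(4M(\gamma)\sin\theta(\gamma))$ and summing over the elliptic classes gives the final two lines of \eqref{tracefinal}. The delicate points are all concentrated here: tracking exactly which poles, equivalently which discrete series, are crossed as $|m|$ passes through successive integers; getting the weight $e^{i\pi m}$ on the $\cosh(2\theta\lambda)$-term correct; and controlling the convergence of the $\lambda$-integral against the exponentially growing elliptic spherical function. These three orbital-integral computations are carried out in \cite[Sections 3.4, 3.5 and 3.6]{BFS23}, and I would take \eqref{tracefinal} directly from their assembly in \cite[Theorem 3.7.1]{BFS23}.
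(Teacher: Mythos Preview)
Your proposal is correct and aligns with the paper's own treatment: the paper does not prove this theorem but simply cites \cite[Theorem 3.7.1]{BFS23}, noting that the identity, hyperbolic, and elliptic orbital integrals are computed in \cite[Sections 3.4, 3.5 and 3.6]{BFS23}. Your sketch of how those three contributions arise is accurate and more detailed than what the paper records, but ultimately you defer to the same source.
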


\section{The determinant of twisted Laplacian}

In this section we follow Part I of \cite{JL} and study the Hurwitz zeta function associated to the theta series
$$
\Theta^{\sharp}_{\tau_m,\rho}(t):= \Tr(e^{-t\Delta^{\sharp}_{\tau_m,\rho}})= \sum_{\mu\in\spec(\Delta_{\tau_m,\rho}^\sharp)}\mult(\mu;\Delta_{\tau_m,\rho}^\sharp)e^{-t\mu}.
$$

As stated above, the operator $\Delta^{\sharp}_{\tau_m,\rho}$ is an elliptic, second order operator with purely discrete spectrum,
hence so is $A^{\sharp}_{\tau_m,\rho}$. Therefore, by \cite{Gi95}, in the case of dimension $n=2$,
there exist constants $\alpha_k$ for integers $k\geq 0$ such that the short time asymptotic of the trace is given by
\begin{equation}\label{eq. short time asymp}
\Tr(e^{-tA^{\sharp}_{\tau_m,\rho}})\sim t^{-1}\sum_{k\geq 0}\alpha_kt^k
\,\,\,\,
\textrm{\rm as $t\to 0^+$.}
\end{equation}
The Hurwitz zeta function associated to the theta series $\Theta^{\sharp}_{\tau_m,\rho}(t)$ is the function
\begin{equation}\label{eq:Hurwitz}
\zeta^{\sharp}_{\tau_m,\rho}(z;s)= \sum_{\mu\in\spec(\Delta_{\tau_m,\rho}^\sharp)}\frac{\mult(\mu;\Delta_{\tau_m,\rho}^\sharp)}{(\mu + s(s-1))^z}
\end{equation}
for $s,z\in \C$ with $\RE((s-1/2)^2)$ and $\RE(z)\gg 0$.  It is elementary to show that
\begin{align}\label{Lmtr}
\zeta^{\sharp}_{\tau_m,\rho}(z;s)&=
\textrm{\rm LM}\big[\Tr(e^{-tA^{\sharp}_{\tau_m,\rho}})\big]((s-1/2)^2;z)\\&=\frac{1}{\Gamma(z)}\int_{0}^{\infty} e^{-t(s-1/2)^2}\Tr(e^{-tA^{\sharp}_{\tau_m,\rho}}) t^{z-1}dt\notag
\end{align}
where $\textrm{\rm LM}(f)(s,z)$ is the Laplace-Mellin transform of a function $f$.  In other words,
the Hurwitz zeta function \eqref{eq:Hurwitz}
is the Laplace-Mellin transform of $\Tr(e^{-tA^{\sharp}_{\tau_m,\rho}})$
in the "Laplace" variable $(s-1/2)^2$ and "Mellin" variable $z$, and further normalized with the factor $1/\Gamma(z)$.
\footnote{We note that
$\textrm{\rm LM}\big[\Tr(e^{-tA^{\sharp}_{\tau_m,\rho}})\big]((s-1/2)^2;z)=
\textrm{\rm LM}\big[\Tr(e^{-t\Delta^{\sharp}_{\tau_m,\rho}})\big](s(s-1);z).
$}
The asymptotic expansion \eqref{eq. short time asymp} implies that the integral \eqref{Lmtr} is well defined for $\RE(z)>1$.

If, for a fixed complex $s$, the function $\zeta^{\sharp}_{\tau_m,\rho}(z;s)$ possesses holomorphic continuation to $z=0$,
then for $\RE(s(s-1))\gg 0$, one can define the determinant $\mathrm{det}(\Delta^{\sharp}_{\tau_m,\rho} + s(s-1))$ of $\Delta^{\sharp}_{\tau_m,\rho}$ by
$$
-\log\mathrm{det}(\Delta^{\sharp}_{\tau_m,\rho} + s(s-1)):= \left.\frac{d}{dz} \zeta^{\sharp}_{\tau_m,\rho}(z;s)\right|_{z=0} =\left.\frac{d}{dz}\left( \textrm{\rm LM}\big[\Tr(e^{-t\Delta^{\sharp}_{\tau_m,\rho}})\big](s(s-1);z)\right)\right|_{z=0}
$$
which formally is equivalent to the statement that
$$
\mathrm{det}(\Delta^{\sharp}_{\tau_m,\rho} + s(s-1))=\prod_{\mu\in\spec(\Delta_{\tau_m,\rho}^\sharp)}(\mu+s(s-1))^{\mult(\mu;\Delta_{\tau_m,\rho}^\sharp)}.
$$

If $\mu=0$ is not an eigenvalue of $\Delta^{\sharp}_{\tau_m,\rho}$, and assuming that \eqref{Lmtr} possesses
an analytic continuation to $s=1$, then the determinant of the operator $\Delta^{\sharp}_{\tau_m,\rho}$ is defined as
$$
-\log\mathrm{det}(\Delta^{\sharp}_{\tau_m,\rho}):= \left.\frac{d}{dz} \zeta^{\sharp}_{\tau_m,\rho}(z;1)\right|_{z=0}
$$
If $\mu=0$ is an eigenvalue of $\Delta^{\sharp}_{\tau_m,\rho}$ with multiplicity $M_\rho=\mult(0;\Delta_{\tau_m,\rho}^\sharp)$,
then, formally one would have that
\begin{align*}
\mathrm{det}(\Delta^{\sharp}_{\tau_m,\rho} + s(s-1))&=[s(s-1)]^{M_\rho}\prod_{\mu\in\spec(\Delta_{\tau_m,\rho}^\sharp), \mu\neq 0}
(\mu+s(s-1))^{\mult(\mu;\Delta_{\tau_m,\rho}^\sharp)}\\&:= [s(s-1)]^{M_\rho} \mathrm{det}(\Delta^{\sharp\ast}_{\tau_m,\rho} + s(s-1))
\end{align*}
where $\ast$ means that the product is taken over non-zero eigenvalues. In this case, again formally one would have that
$$
 \left.\frac{d^{M_\rho}}{ds^{M_\rho}} \mathrm{det}(\Delta^{\sharp}_{\tau_m,\rho} + s(s-1))\right|_{s=1}=M_\rho! \mathrm{det}(\Delta^{\sharp\ast}_{\tau_m,\rho}).
$$
Hence, we define
\begin{equation}\label{eq. def ast det}
\mathrm{det}(\Delta^{\sharp\ast}_{\tau_m,\rho}):= \frac{1}{M_\rho!}\left.\frac{d^{M_\rho}}{ds^{M_\rho}} \left(\mathrm{det}(\Delta^{\sharp}_{\tau_m,\rho} + s(s-1))\right)\right|_{s=1},
\end{equation}
assuming that the right-hand side of the above equation is well-defined.

Let us now justify the above formal definitions. We start by proving the following lemma where we show that
for $\RE(s-1/2)^2\gg 0$, the function $\big[\Tr(e^{-t\Delta^{\sharp}_{\tau_m,\rho}})\big](s(s-1);z)$ possesses holomorphic continuation to
$z=0$. Moreover, we also deduce the asymptotic expansion of its derivative at $z=0$ as $s\to\infty$.

\begin{lem}\label{lemma cont of det}
For  $\RE(s-1)^2\gg 0$, function $\textrm{\rm LM}\big[\Tr(e^{-t\Delta^{\sharp}_{\tau_m,\rho}})\big](s(s-1);z)$
possesses an analytic continuation to $z=0$. Moreover, with the notation as in \eqref{eq. short time asymp} we have that
\begin{multline*}
\left. -\log\mathrm{det}(\Delta^{\sharp}_{\tau_m,\rho} + s(s-1))\right. = \left.\frac{d}{dz}\left( \LM\big[\Tr(e^{-t\Delta^{\sharp}_{\tau_m,\rho}})\big](s(s-1);z)\right)\right|_{z=0} \\=2\alpha_0(s-1/2)^2\log(s-1/2)-\alpha_0(s-1/2)^2 -2\alpha_1\log(s-1/2) + o(1),
\end{multline*}
as $s\to\infty$.
\end{lem}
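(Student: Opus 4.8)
The plan is to obtain the result by a direct evaluation of the Laplace--Mellin integral, built on the short-time expansion \eqref{eq. short time asymp}. Write $w=(s-1/2)^2$, so that $s(s-1)=w-\tfrac14$ and, since $A^\sharp_{\tau_m,\rho}=\Delta^\sharp_{\tau_m,\rho}-\tfrac14$, one has $e^{-ts(s-1)}\Tr(e^{-t\Delta^\sharp_{\tau_m,\rho}})=e^{-tw}\Tr(e^{-tA^\sharp_{\tau_m,\rho}})$. Hence, by the footnote to \eqref{Lmtr},
\[
\LM\big[\Tr(e^{-t\Delta^\sharp_{\tau_m,\rho}})\big](s(s-1);z)=\zeta^\sharp_{\tau_m,\rho}(z;s)=\frac{1}{\Gamma(z)}\int_0^\infty e^{-tw}\,\Tr\big(e^{-tA^\sharp_{\tau_m,\rho}}\big)\,t^{z-1}\,dt
\]
for $\RE(z)>1$ and $\RE(w)$ large. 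The lower bound on $\RE(w)$ — the content of the hypothesis $\RE((s-1)^2)\gg0$ — is what makes the integrand decay at $t\to\infty$; it is forced by the at most exponential growth of $\Tr(e^{-tA^\sharp_{\tau_m,\rho}})$ as $t\to\infty$, itself a consequence of trace-class-ness of the heat operator and the location of $\spec(\Delta^\sharp_{\tau_m,\rho})$ in a translate of a cone.

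First I would split the integral as $\int_0^1+\int_1^\infty$. On $[1,\infty)$ the integrand decays exponentially once $\RE(w)$ is above the threshold, so $z\mapsto\int_1^\infty e^{-tw}\Tr(e^{-tA^\sharp_{\tau_m,\rho}})t^{z-1}\,dt$ is entire; multiplying by $1/\Gamma(z)$ produces a function vanishing at $z=0$ whose derivative there is $\int_1^\infty e^{-tw}\Tr(e^{-tA^\sharp_{\tau_m,\rho}})t^{-1}\,dt=O(e^{-c\RE(w)})=o(1)$ as $s\to\infty$. On $(0,1]$ I would insert the first truncation of \eqref{eq. short time asymp}, namely $\Tr(e^{-tA^\sharp_{\tau_m,\rho}})=\alpha_0t^{-1}+\alpha_1+g(t)$ with $g(t)=O(t)$ as $t\to0^+$. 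The corresponding remainder term $\frac{1}{\Gamma(z)}\int_0^1 e^{-tw}g(t)t^{z-1}\,dt$ is holomorphic for $\RE(z)>-1$, vanishes at $z=0$, and has derivative there equal to $\int_0^1 e^{-tw}g(t)t^{-1}\,dt=O(1/\RE(w))=o(1)$.

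It remains to treat the two leading terms, which I would reduce to complete Gamma integrals by writing, for $\RE(z)$ large and then by continuation,
\[
\int_0^1 e^{-tw}t^{z-2}\,dt=\Gamma(z-1)w^{1-z}-\int_1^\infty e^{-tw}t^{z-2}\,dt,\qquad \int_0^1 e^{-tw}t^{z-1}\,dt=\Gamma(z)w^{-z}-\int_1^\infty e^{-tw}t^{z-1}\,dt.
\]
Multiplying the first by $\alpha_0/\Gamma(z)$ and the second by $\alpha_1/\Gamma(z)$ and using $\Gamma(z-1)/\Gamma(z)=1/(z-1)$, one gets, near $z=0$,
\[
\zeta^\sharp_{\tau_m,\rho}(z;s)=\frac{\alpha_0\,w^{1-z}}{z-1}+\alpha_1\,w^{-z}+\frac{1}{\Gamma(z)}R(z,w),
\]
where $R(z,w)$ collects the four tail and remainder integrals above; it is holomorphic in a neighbourhood of $z=0$ with $R(0,w)=o(1)$ as $s\to\infty$. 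In particular $\zeta^\sharp_{\tau_m,\rho}(\,\cdot\,;s)$ extends holomorphically across $z=0$. Differentiating at $z=0$, using $\left.\frac{d}{dz}\frac{1}{\Gamma(z)}\right|_{z=0}=1$, gives $\left.\frac{d}{dz}\zeta^\sharp_{\tau_m,\rho}(z;s)\right|_{z=0}=\alpha_0(w\log w-w)-\alpha_1\log w+o(1)$, and substituting $w=(s-1/2)^2$ and $\log w=2\log(s-1/2)$ yields exactly the claimed expression, since the left-hand side is $-\log\det(\Delta^\sharp_{\tau_m,\rho}+s(s-1))$ by definition.

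All of this is essentially routine bookkeeping once the splitting is set up; the two points that need genuine care are: (i) correctly identifying which of the five resulting pieces contribute to the $z$-derivative at $z=0$ — precisely the two that carry a compensating pole of $\Gamma$ (the $k=0$ and $k=1$ terms of the expansion), while the others are killed to their value at $0$ by the simple zero of $1/\Gamma(z)$; and (ii) checking that every error term is $o(1)$ \emph{uniformly} as $s\to\infty$, which rests on the elementary estimates $\int_0^1 e^{-tw}\,dt=O(1/\RE(w))$ and $\int_1^\infty e^{-tw}t^{z-1}\,dt=O(e^{-c\RE(w)})$ together with the large-$t$ bound on the heat trace. I expect point (ii) — pinning down the decay of $\Tr(e^{-tA^\sharp_{\tau_m,\rho}})$ for $t\ge1$ in terms of the possibly complex spectrum so as to fix the admissible range of $s$ — to be the only substantive obstacle.
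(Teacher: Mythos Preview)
Your proposal is correct and follows essentially the same route as the paper. The only cosmetic difference is that the paper subtracts the first two asymptotic terms $\alpha_0/t+\alpha_1$ from the heat trace over the \emph{whole} interval $[0,\infty)$ (relying on the factor $e^{-t(s-1/2)^2}$ to ensure convergence at infinity), thereby arriving at the same two explicit pieces $\alpha_0(s-1/2)^{2-2z}/(z-1)+\alpha_1(s-1/2)^{-2z}$ plus a single remainder integral $I_1(s,z)$ that is holomorphic at $z=0$ and $o(1)$ as $s\to\infty$; you instead split at $t=1$ and then complete the Gamma integrals, which produces a few more tail terms in your $R(z,w)$ but the same conclusion. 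Both arguments isolate identical leading terms and differentiate them identically at $z=0$.
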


\begin{proof}
As above, define the theta function
$$
\Theta^{\sharp}_{\tau_m,\rho}(t):= \Tr(e^{-t\Delta^{\sharp}_{\tau_m,\rho}})
$$
  Using \eqref{eq. short time asymp} for $\RE(s-1/2)^2\gg 0$ and $\RE(z)>1$ we can write that
\begin{multline}\label{eq. starting LM}
  \big[\Tr(e^{-t\Delta^{\sharp}_{\tau_m,\rho}})\big](s(s-1);z)= \\ \frac{1}{\Gamma(z)} \int_{0}^{\infty} e^{-t(s-1/2)^2}\left(\Theta^{\sharp}_{\tau_m,\rho}(t) - \frac{\alpha_0}{t} - \alpha_1\right) t^{z-1}dt\\
  +\frac{1}{\Gamma(z)}  \int_{0}^{\infty} e^{-t(s-1/2)^2}\left(\frac{\alpha_0}{t} + \alpha_1\right) t^{z-1}dt.
  \end{multline}
Therefore, we have that
\begin{equation}\label{eq. sum for cont}
 \LM\big[\Tr(e^{-t\Delta^{\sharp}_{\tau_m,\rho}})\big](s(s-1);z)=I_1(s,z)+\frac{\alpha_0 (s-1/2)^{2-2z}}{(z-1)}+\alpha_1(s-1/2)^{-2z},
\end{equation}
where $I_1(s,z)$ denotes the first integral on the right-hand side of equation \eqref{eq. starting LM}. In view of the small time asymptotic expansion \eqref{eq. short time asymp}, and having in mind that absolute values of eigenvalues of $\Delta^{\sharp}_{\tau_m,\rho}$ tend to infinity, it is immediate that $I_1(s,z)$ is holomorphic at $z=0$ and so are the other two terms on the right-hand side of \eqref{eq. sum for cont}. By differentiating with respect to $z$ and taking $z=0$ we get
\begin{multline*}
 -\log\mathrm{det}(\Delta^{\sharp}_{\tau_m,\rho} + s(s-1))=\int_{0}^{\infty} e^{-t(s-1/2)^2}\left(\Theta^{\sharp}_{\tau_m,\rho}(t) - \frac{\alpha_0}{t} - \alpha_1\right)\frac{dt}{t}\\ +2\alpha_0(s-1/2)^2\log(s-1/2)-\alpha_0(s-1/2)^2 -2\alpha_1\log(s-1/2).
\end{multline*}
Trivially,
$$
\int_{0}^{\infty} e^{-t(s-1/2)^2}\left(\Theta^{\sharp}_{\tau_m,\rho}(t) - \frac{\alpha_0}{t} - \alpha_1\right)\frac{dt}{t}=o(1),
$$
as $s\to\infty$ which proves the second claim.
\end{proof}

\section{Preliminary computations}

In this section, we conduct preliminary computations necessary to deduce a relation between the regularized determinant of the twisted Laplacian and the twisted Selberg zeta function in Section \ref{sec: with main thm}. The key point will be trace formula \eqref{tracefinal} combined with Lemma \ref{lemma cont of det}.


We substitute the (R.H.S) of \eqref{tracefinal} in the integral \eqref{Lmtr} and get
\begin{align}\label{Lmtrcon}\notag
\LM\big[\Tr(e^{-tA^{\sharp}_{\tau_m,\rho}})\big]&((s-1/2)^2;z) \\&=  \frac{1}{\Gamma(z)}\int_{0}^{\infty} e^{-t(s-1/2)^2} t^{z-1}\big(I_{\tau_m,\rho}(t)+H_{\tau_m,\rho}(t)+E_{\tau_m,\rho}(t)\big)dt.
\end{align}
The term $I_{\tau_m,\rho}(t)$ is the \textit{identity contribution} in the geometrical part of the trace formula \eqref{tracefinal} given by
\begin{align}\nonumber
		& I_{\tau_m,\rho}(t)=\frac{\Vol(X)\dim(V_\rho)}{4\pi}\Bigg[\int_\R e^{-t\lambda^2}\frac{\lambda\sinh(2\pi\lambda)}{\cosh(2\pi\lambda)+\cos(\pi m)}\,d\lambda\\
		& \hspace{7cm}+ \sum_{\substack{1\leq\ell<|m|\\
\ell\textup{ odd}}}(|m|-\ell)e^{(\frac{|m|-\ell}{2})^2t}\Bigg].\label{eq. Identity cont}
\end{align}
The term $H_{\tau_m,\rho}(t)$ is the \textit{hyperbolic contribution} given by
\begin{equation}\label{hypc}
H_{\tau_m,\rho}(t)=\frac{1}{2\sqrt{4\pi t}}\sum_{[\gamma]\neq e} \frac{l(\gamma)\Tr(\rho(\gamma))}{n_{\Gamma}(\gamma)\sinh(l(\gamma)/2)}e^{-\frac{l(\gamma)^{2}}{4t}}.
\end{equation}
The term $E_{\tau_m,\rho}(t)$ is the \textit{elliptic contribution} given by
\begin{align} \nonumber
&E_{\tau_m,\rho}(t)=\sum_{[\gamma]\textup{ ell.}} \frac{\Tr\rho(\gamma)}{4M(\gamma)\sin(\theta(\gamma))}\Bigg[\int_\R e^{-t\lambda^2}\frac{\cosh(2(\pi-\theta(\gamma))\lambda)+e^{i\pi m}\cosh(2\theta(\gamma)\lambda)}{\cosh(2\pi\lambda)+\cos(\pi m)}\,d\lambda \\
		& \hspace{4cm}+2i\sign(m)\sum_{\substack{1\leq\ell<|m|\\ \ell\textup{ odd}}}e^{i\sign(m)(|m|-\ell)\theta(\gamma)}e^{(\frac{|m|-\ell}{2})^2t}\Bigg].\label{eq. elliptic cont}
\end{align}

Let us define
\begin{equation}\label{intofhyp}
H_{\tau_m,\rho}(z;s):=\frac{1}{\Gamma(z)}\int_{0}^{\infty} e^{-t(s-1/2)^2} t^{z-1}H_{\tau_m,\rho}(t)dt,
\end{equation}
\begin{equation}\label{intofid}
 I_{\tau_m,\rho}(z;s):= \frac{1}{\Gamma(z)}\int_{0}^{\infty} e^{-t(s-1/2)^2} t^{z-1}I_{\tau_m,\rho}(t)dt,
\end{equation}
and
\begin{equation}\label{intofell}
E_{\tau_m,\rho}(z;s):=\frac{1}{\Gamma(z)}\int_{0}^{\infty} e^{-t(s-1/2)^2} t^{z-1}E_{\tau_m,\rho}(t)dt
\end{equation}
\vskip.10in

We deal first with the function $H_{\tau_m,\rho}(z;s)$.
We want to compute the Laplace-Mellin transform of \eqref{hypc} which is the following integral
\begin{equation}\label{dethyp}
H_{\tau_m,\rho}(z;s)=
\frac{1}{\Gamma(z)}\int_{0}^{\infty} \frac{1}{2\sqrt{4\pi}}\sum_{[\gamma]\neq e} \frac{l(\gamma)\Tr(\rho(\gamma))}{n_{\Gamma}(\gamma)\sinh(l(\gamma)/2)}
e^{-t(s-1/2)^2} t^{z-\frac{3}{2}} e^{-\frac{l(\gamma)^{2}}{4t}}dt.
\end{equation}

\begin{lem}
The sum and the integral in \eqref{dethyp} can be interchanged for all $s$ and $z$ such that $\RE((s-1/2)^2)\gg 0$.
\end{lem}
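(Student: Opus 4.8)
The plan is a Tonelli/Fubini argument in which the only real issue is that $\rho$ is not assumed unitary, so $\Tr(\rho(\gamma))$ may grow exponentially in $\ell(\gamma)$. First I would note that for each \emph{fixed} hyperbolic class $[\gamma]\neq e$ the integral $\int_0^\infty e^{-t(s-1/2)^2}t^{z-3/2}e^{-\ell(\gamma)^2/(4t)}\,dt$ converges absolutely for \emph{every} $z\in\C$ whenever $\RE((s-1/2)^2)>0$: the factor $e^{-\ell(\gamma)^2/(4t)}$ produces super-polynomial decay as $t\to0^+$ and so absorbs the $t^{z-3/2}$ singularity, while $\RE((s-1/2)^2)>0$ controls the integrand as $t\to\infty$. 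Consequently, by Tonelli's theorem the interchange in \eqref{dethyp} is valid as soon as
\[
\sum_{[\gamma]\neq e}\frac{\ell(\gamma)\,\abs{\Tr(\rho(\gamma))}}{n_\Gamma(\gamma)\sinh(\ell(\gamma)/2)}\int_0^\infty e^{-at}\,t^{\,\RE(z)-3/2}\,e^{-\ell(\gamma)^2/(4t)}\,dt<\infty,\qquad a:=\RE\bigl((s-1/2)^2\bigr),
\]
and establishing this is the content of the proof.

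The second step is to bound the inner integral uniformly in $[\gamma]$. By the arithmetic--geometric mean inequality $at+\ell(\gamma)^2/(4t)\geq\sqrt a\,\ell(\gamma)$ for all $t>0$, so splitting off half of the exponent gives $e^{-at-\ell(\gamma)^2/(4t)}\leq e^{-\frac{\sqrt a}{2}\ell(\gamma)}e^{-\frac12 at-\ell(\gamma)^2/(8t)}$; bounding $e^{-\ell(\gamma)^2/(8t)}$ by $1$ on $t\geq1$, bounding $e^{-at/2}$ by $1$ on $0<t\leq1$, and changing variables $t\mapsto\ell(\gamma)^2/(8t)$ on the lower range, I expect an estimate of the shape
\[
\int_0^\infty e^{-at}\,t^{\,\RE(z)-3/2}\,e^{-\ell(\gamma)^2/(4t)}\,dt\;\leq\;e^{-\frac{\sqrt a}{2}\ell(\gamma)}\,P\bigl(\ell(\gamma)\bigr),
\]
with $P$ a polynomial whose degree depends only on $\RE(z)$ and whose coefficients depend only on $a$ and $\RE(z)$, not on $\gamma$. (Alternatively one recognises the integral, up to the obvious substitution, as $2\bigl(\ell(\gamma)^2/(4a)\bigr)^{(\RE(z)-1/2)/2}K_{\RE(z)-1/2}(\sqrt a\,\ell(\gamma))$ and simply quotes the large-argument asymptotics of the Macdonald function $K_\nu$.) The decisive gain is the factor $e^{-\frac{\sqrt a}{2}\ell(\gamma)}$, in which $\sqrt a$ is as large as we wish.

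In the final step I would feed this back into the series. Using compactness of $X$ --- so that the systole $\ell_0:=\inf_{[\gamma]\neq e}\ell(\gamma)$ is positive and hence $\sinh(\ell(\gamma)/2)^{-1}\leq C\,e^{-\ell(\gamma)/2}$ --- and absorbing both $P(\ell(\gamma))$ and one extra factor $\ell(\gamma)$ into an arbitrarily small exponential $e^{-\delta\ell(\gamma)}$ (legitimate since $\ell(\gamma)\geq\ell_0>0$), the series in question is dominated by a constant times $\sum_{[\gamma]\neq e}\tfrac{\ell(\gamma)\abs{\Tr(\rho(\gamma))}}{2n_\Gamma(\gamma)\sinh(\ell(\gamma)/2)}e^{-(\frac{\sqrt a}{2}-\delta)\ell(\gamma)}$, which is precisely the series of absolute values of the terms of the logarithmic derivative $L(s;\rho)$ evaluated at $s-\tfrac12=\tfrac{\sqrt a}{2}-\delta$. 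Since, by \cite[Theorem 3.1]{fedosova2020meromorphic}, the product defining $Z(s;\rho)$ converges in some right half plane $\RE(s)>\sigma_0$, this last series converges whenever $\tfrac{\sqrt a}{2}-\delta>\sigma_0-\tfrac12$, i.e.\ whenever $\RE((s-1/2)^2)$ exceeds a constant depending only on $\Gamma$ and $\rho$ --- and, crucially, not on $z$; this is the precise meaning of ``$\RE((s-1/2)^2)\gg0$'' in the statement. The main (and only) point needing care is exactly this interplay with non-unitarity: $\abs{\Tr(\rho(\gamma))}$ is tamed only \emph{after} the decay $e^{-\frac{\sqrt a}{2}\ell(\gamma)}$ has been extracted, which is why the half plane of validity must be of the form $\RE((s-1/2)^2)\gg0$ rather than merely $\RE((s-1/2)^2)>0$.
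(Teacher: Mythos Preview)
Your argument is correct and follows essentially the same route as the paper: reduce to absolute convergence via Tonelli, extract exponential decay $e^{-c\sqrt{a}\,\ell(\gamma)}$ from the inner integral (the paper does this by identifying the integral as $K_{\RE(z)-1/2}$ and citing its large-argument asymptotics, which you also list as an alternative), and then use that this decay beats the exponential growth forced by non-unitarity once $\RE((s-1/2)^2)$ is large. The only cosmetic difference is in the last step: the paper invokes the explicit bound $|\Tr(\rho(\gamma))|\le c'e^{c\ell(\gamma)}$ from \cite{frahm2023twisted} together with standard length-spectrum growth, whereas you package the same information by comparing directly with the absolutely convergent series for $L(s;\rho)$ --- both are equivalent and rely on the same underlying estimates.
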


\begin{proof}
It suffices to prove that
\begin{equation}\label{eq:hyp-series}
\sum_{[\gamma]\neq e} \left|\frac{l(\gamma)\Tr(\rho(\gamma))}{n_{\Gamma}(\gamma)\sinh(l(\gamma)/2)}
\right|\int_{0}^{\infty} e^{-t\RE((s-1/2)^2)} t^{\RE(z)-\frac{3}{2}} e^{-\frac{l(\gamma)^{2}}{4t}}dt<\infty.
\end{equation}
In order to compute the integral in\eqref{eq:hyp-series}, we recall \cite{GR07}, formula 3.478.4 with $p=1$, which
states that
\begin{equation}\label{eq. basic gr identity}
\int_{0}^{\infty} t^{\nu-1} e^{-\beta t-\frac{\gamma}{t}}dt=2\left(\frac{\gamma}{\beta}\right)^{\nu/2}K_\nu(2\sqrt{\beta\gamma}),
\end{equation}
for $\RE(\beta)>0$ and $\RE(\gamma)>0$.
In this formula, the square-root is principal and $K_\nu$ is the $K$-Bessel function.
In our setting, we then have that
\begin{align*}
\int_{0}^{\infty} e^{-t\RE((s-1/2)^2)}& t^{\RE(z)-\frac{3}{2}} e^{-\frac{l(\gamma)^{2}}{4t}}dt\\&= 2\left(\frac{l(\gamma)}{2\sqrt{\RE(s-1/2)^2}}\right)^{\RE(z)-1/2}K_{\RE(z)-1/2}
\left(l(\gamma)\sqrt{\RE(s-1/2)^2}\right).
\end{align*}
The $K$-Bessel function decays exponentially.  More specifically, from \cite{GR07}, 8.451.6 we have that
$$
K_{\RE(z)-1/2} \left(l(\gamma)\sqrt{\RE(s-1/2)^2}\right)
\ll (l(\gamma))^{-1/2} e^{-l(\gamma)\sqrt{\RE(s-1/2)^2}}
$$
for any fixed complex $z$.  Therefore, for $\RE((s-1/2)^2)\gg 0$ we get that
\begin{align*}
\sum_{[\gamma]\neq e} &\left|\frac{l(\gamma)\Tr(\rho(\gamma))}{n_{\Gamma}(\gamma)\sinh(l(\gamma)/2)}
\right|\int_{0}^{\infty} e^{-t\RE((s-1/2)^2)} t^{\RE(z)-\frac{3}{2}} e^{-\frac{l(\gamma)^{2}}{4t}}dt\\& \ll
\sum_{[\gamma]\neq e} \frac{|\Tr(\rho(\gamma))|}{n_{\Gamma}(\gamma)\sinh(l(\gamma)/2)}
(l(\gamma))^{\RE(z)} e^{-l(\gamma)\sqrt{\RE(s-1/2)^2}}
\end{align*}
Equation (4.4) from \cite{frahm2023twisted} asserts the existence of constants $c',c$ independent of $\gamma$,
such that
\begin{equation}\label{eq. trace bound}
|\Tr(\rho(\gamma))|\leq c' e^{cl(\gamma)}.
\end{equation}
With all this, one now can apply standard bounds for the growth of the length spectrum $\{l(\gamma)\}$ to
conclude that \eqref{eq:hyp-series} converges for
$\RE(s-1/2)^2\gg 0$ and any $z\in\mathbb{C}$, so then it is allowed to interchange the sum and the integral in \eqref{dethyp}.
\end{proof}

\begin{lem}
There exists a constant $r>0$ such that for all $s\in\C$ with $\RE(s-1/2)\geq r$, the function $H_{\tau_m,\rho}(z;s)$ is holomorphic in
$z$-variable on the whole complex plane.
\end{lem}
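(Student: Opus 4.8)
The plan is to reduce the claim to the previous lemma, which already established that the sum over hyperbolic conjugacy classes and the integral in \eqref{dethyp} may be interchanged for $\RE((s-1/2)^2)\gg0$. So it suffices to show that, after interchanging, each summand depends holomorphically on $z$ on all of $\C$ and that the resulting series of holomorphic functions converges locally uniformly in $z$ (for $s$ with $\RE(s-1/2)$ large enough, say $\geq r$), whence the limit is holomorphic in $z$ by the Weierstrass theorem.

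\textbf{Key steps.}
First I would evaluate the $t$-integral in each summand of \eqref{dethyp} in closed form using \eqref{eq. basic gr identity} with $\nu=z-\tfrac12$, $\beta=(s-1/2)^2$ and $\gamma=l(\gamma)^2/4$; this is legitimate once $\RE((s-1/2)^2)>0$, and it expresses the $[\gamma]$-term as a constant times
$$
\bigl(l(\gamma)/(2(s-1/2))\bigr)^{z-1/2}\,K_{z-1/2}\!\bigl(l(\gamma)(s-1/2)\bigr),
$$
up to the geometric weight $\tfrac{l(\gamma)\Tr\rho(\gamma)}{n_\Gamma(\gamma)\sinh(l(\gamma)/2)}$. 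For each fixed geodesic, $z\mapsto K_{z-1/2}(w)$ is entire (the $K$-Bessel function is entire in its order) and the prefactor $(\cdot)^{z-1/2}$ is entire once we fix the principal branch, so every summand is an entire function of $z$. Second, I would establish locally uniform convergence in $z$: fix a compact set $\mathcal K\subseteq\C$ and let $\sigma=\RE(s-1/2)$. Using the exponential decay bound for $K_\nu$ from \cite{GR07}, 8.451.6 — with the order ranging over $\mathcal K-\tfrac12$, which is compact, so the implied constant is uniform in $z\in\mathcal K$ — one gets a bound of the shape $|\,[\gamma]\text{-term}\,|\ll |\Tr\rho(\gamma)|\,n_\Gamma(\gamma)^{-1}\sinh(l(\gamma)/2)^{-1}\,l(\gamma)^{C}\,e^{-\sigma\, l(\gamma)}$ for a constant $C$ depending only on $\mathcal K$. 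Inserting the polynomial-exponential bound \eqref{eq. trace bound} for $|\Tr\rho(\gamma)|$ and the standard exponential growth of the length spectrum $\#\{[\gamma]:l(\gamma)\leq L\}$, I would choose $r$ large enough (depending on $\mathcal K$, on the constants $c,c'$ in \eqref{eq. trace bound}, and on the growth rate of the length spectrum) that $\sigma\geq r$ forces the majorant series to converge. Since $\mathcal K$ was arbitrary and one can take $r$ from a single large compact exhaustion, this gives holomorphy of $H_{\tau_m,\rho}(z;s)$ in $z$ on all of $\C$ for $\RE(s-1/2)\geq r$.

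\textbf{Main obstacle.}
The delicate point is uniformity of the Bessel decay estimate in the order $z-\tfrac12$: the bound in \cite{GR07}, 8.451.6 is asymptotic as the argument tends to infinity, and the implied constant a priori depends on the order. Because here the order ranges over a compact set while the argument $l(\gamma)(s-1/2)$ is genuinely large uniformly over the length spectrum (as $l(\gamma)$ is bounded below by the systole and $\RE(s-1/2)\geq r$), one does get a constant uniform on compacta in $z$; but making this precise — tracking that the error terms in the uniform asymptotic expansion of $K_\nu$ are controlled uniformly for $\nu$ in a compact set — is the step requiring care. Everything else (the closed-form integral, entirety of each summand, the length-spectrum counting) is routine given the results already cited.
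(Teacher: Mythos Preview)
Your proposal is correct and follows essentially the same route as the paper, though organized slightly differently. The paper is more economical: rather than first rewriting each summand via the Bessel identity \eqref{eq. basic gr identity} (which it postpones to the \emph{next} lemma) and then invoking the asymptotic 8.451.6, it simply reuses the majorant already obtained in the proof of the preceding lemma, namely
\[
\sum_{[\gamma]\neq e} \frac{|\Tr(\rho(\gamma))|}{n_{\Gamma}(\gamma)\sinh(l(\gamma)/2)}\,(l(\gamma))^{\RE(z)}\, e^{-C\, l(\gamma)},
\]
and observes that this converges uniformly on compacts in $z$. This sidesteps your ``main obstacle'' entirely: the uniformity-in-order issue for $K_{z-1/2}$ never needs to be addressed because the Bessel bound was applied in the previous lemma for \emph{real} order $\RE(z)-\tfrac12$ only. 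One small point in your write-up: you should not let $r$ depend on $\mathcal K$. Since the majorant carries the factor $e^{-\sigma\, l(\gamma)}$ against only a polynomial $l(\gamma)^{C(\mathcal K)}$, any single $r>c+1$ (with $c$ from \eqref{eq. trace bound}, plus the length-spectrum growth rate) works for every compact $\mathcal K$; the paper makes exactly this choice.
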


\begin{proof}
 Let $C>c+1$, where $c$ is the constant from \eqref{eq. trace bound}. Then, the series
  $$
  \sum_{[\gamma]\neq e} \frac{|\Tr(\rho(\gamma))|}{n_{\Gamma}(\gamma)\sinh(l(\gamma)/2)}
(l(\gamma))^{\RE(z)} e^{-C l(\gamma)}
  $$
converges uniformly and absolutely on every compact subset of the complex $z$-plane hence so does the integral in the
right-hand-side of \eqref{dethyp}. Given that $1/\Gamma(z)$ is a holomorphic function of $z$, the assertion is proved that for all
$z\in \C$ and $s\in\C$ with $\RE(s-1/2)^2\geq C'$ for some positive constant $C'$.
\end{proof}

\begin{lem}
 For all $s\in\C$ with $\RE(s-1/2)^2\geq C$ and all $z\in\C$, we have that
\begin{equation}\label{H in terms of K Bessel}
H_{\tau_m,\rho}(z;s)=
\frac{1}{\Gamma(z)}\frac{1}{\sqrt{4\pi}}\sum_{[\gamma]\neq e} \frac{l(\gamma)\Tr(\rho(\gamma))}{n_{\Gamma}(\gamma)\sinh(l(\gamma)/2)}
\left(\frac{l(\gamma)}{2s-1}\right)^{z-1/2}K_{z-1/2}((s-1/2)l(\gamma)).
\end{equation}
\end{lem}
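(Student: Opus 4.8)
The plan is to reduce the claim to a single classical integral evaluation applied termwise. By the two preceding lemmas, for $s$ in the region under consideration the sum and the integral in \eqref{dethyp} may be interchanged, and the resulting series converges locally uniformly in $z$ on all of $\C$. Hence it suffices to compute, for each fixed class $[\gamma]\neq e$, the integral
$$
\frac{1}{\Gamma(z)}\int_{0}^{\infty} e^{-t(s-1/2)^2}\, t^{z-\frac{3}{2}}\, e^{-\frac{l(\gamma)^{2}}{4t}}\,dt ,
$$
and then reinstate the constant prefactors and sum over $[\gamma]\neq e$.

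To evaluate the integral I would apply \eqref{eq. basic gr identity} (i.e.\ \cite{GR07}, formula 3.478.4), which holds for all complex orders, with $\nu=z-\tfrac{1}{2}$, $\beta=(s-1/2)^2$, and the remaining parameter equal to $l(\gamma)^2/4$. Its hypotheses hold for $s$ sufficiently far to the right: $\RE\bigl(l(\gamma)^2/4\bigr)>0$ because $l(\gamma)>0$ is real, and $\RE\bigl((s-1/2)^2\bigr)>0$ there (which also guarantees convergence of the integral). This yields
$$
\int_{0}^{\infty} e^{-t(s-1/2)^2}\, t^{z-\frac{3}{2}}\, e^{-\frac{l(\gamma)^{2}}{4t}}\,dt
= 2\left(\frac{l(\gamma)^{2}/4}{(s-1/2)^{2}}\right)^{\!\frac{z-1/2}{2}} K_{z-1/2}\!\left(2\sqrt{\tfrac{l(\gamma)^{2}}{4}\,(s-1/2)^{2}}\,\right).
$$
Since also $\RE(s-1/2)>0$, the principal branches give $\sqrt{(s-1/2)^{2}}=s-1/2$ and, more generally, $\bigl((s-1/2)^{2}\bigr)^{w}=(s-1/2)^{2w}$ for every $w\in\C$; together with $l(\gamma)>0$ this collapses the argument of the $K$-Bessel function to $(s-1/2)\,l(\gamma)$ and the power prefactor to $\bigl(l(\gamma)/(2s-1)\bigr)^{z-1/2}$. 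Multiplying by $\tfrac{1}{2\sqrt{4\pi}}\cdot\tfrac{l(\gamma)\Tr(\rho(\gamma))}{n_{\Gamma}(\gamma)\sinh(l(\gamma)/2)}$ — whose factor $\tfrac{1}{2}$ cancels the $2$ produced by \eqref{eq. basic gr identity} — and by $1/\Gamma(z)$, and then summing over $[\gamma]\neq e$, gives exactly \eqref{H in terms of K Bessel}.

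Every step above is routine, so the only real care is the bookkeeping of the multivalued functions, and this is where I expect the (modest) main difficulty to sit. Concretely, one must check that the principal square root of $(s-1/2)^{2}\cdot l(\gamma)^{2}/4$ equals $(s-1/2)\,l(\gamma)/2$ rather than its negative, and that $\bigl((s-1/2)^{2}\bigr)^{(z-1/2)/2}$ reduces to $(s-1/2)^{z-1/2}$; both follow from the elementary fact that, for $\RE w>0$, one has $\arg(w^{2})=2\arg(w)$ and hence $\log(w^{2})=2\log(w)$ for the principal logarithm. This is precisely why the identity is stated only for $s$ with $\RE(s-1/2)$ large. Finally, a consistency check confirms the termwise computation is legitimate: both sides of \eqref{H in terms of K Bessel} are holomorphic in $z$ on all of $\C$, the left-hand side by the previous lemma and the right-hand side because $K_{z-1/2}(w)$ is entire in the order $z$, the factors $\bigl(l(\gamma)/(2s-1)\bigr)^{z-1/2}$ and $1/\Gamma(z)$ are entire, and the series converges locally uniformly by the estimate already used to establish holomorphy of $H_{\tau_m,\rho}(z;s)$.
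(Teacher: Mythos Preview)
Your proof is correct and follows exactly the same approach as the paper: interchange the sum and integral (justified by the two preceding lemmas) and apply \eqref{eq. basic gr identity} termwise with $\nu=z-\tfrac12$, $\beta=(s-1/2)^2$, and the remaining parameter $l(\gamma)^2/4$. The paper's own proof is a one-line reference to this same substitution, so your additional care with the principal branches and the holomorphy check in $z$ is simply a more detailed version of the intended argument.
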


\begin{proof}
The stated formula follows by interchanging the sum and the integral in \eqref{dethyp} and by applying  \eqref{eq. basic gr identity} with $\nu=z-1/2$, $\beta=(s-1/2)^2$ and $\gamma=l(\gamma)^2/4$.
\end{proof}

\begin{prop}\label{hypfinal}
For all $s\in\C$,
\begin{equation}\label{eq. deriv at z=0}
\frac{d}{dz}\bigg\vert_{z=0}H_{\tau_m,\rho}(z;s)=-\log Z(s;\rho)
\end{equation}
\end{prop}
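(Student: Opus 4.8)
The plan is to pass to the explicit $K$-Bessel representation of $H_{\tau_m,\rho}(z;s)$ just obtained and to exploit that $1/\Gamma(z)$ has a simple zero at $z=0$. Since $1/\Gamma(z)=z+\gamma z^{2}+\cdots$ near the origin, one has $1/\Gamma(0)=0$ and $\bigl(1/\Gamma\bigr)'(0)=1$; and, as established above, the series for $H_{\tau_m,\rho}(z;s)$ converges uniformly on compact subsets of the $z$-plane whenever $\RE(s-1/2)^{2}\gg0$. Hence I may differentiate that series term by term at $z=0$, and in each summand only the factor $1/\Gamma(z)$ contributes, so that the $z$-derivative at $0$ of that summand equals the value of the remaining factors at $z=0$.

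Next I evaluate $K_{z-1/2}$ at $z=0$ using $K_{-1/2}(x)=K_{1/2}(x)=\sqrt{\pi/(2x)}\,e^{-x}$ with $x=(s-1/2)\ell(\gamma)$. A short simplification shows that the product of $1/\sqrt{4\pi}$, of $\bigl(\ell(\gamma)/(2s-1)\bigr)^{z-1/2}\big|_{z=0}$, and of $K_{-1/2}\bigl((s-1/2)\ell(\gamma)\bigr)$ collapses to $1/(2\ell(\gamma))$, whence
$$
\frac{d}{dz}\bigg|_{z=0}H_{\tau_m,\rho}(z;s)=\sum_{[\gamma]\neq e}\frac{\Tr\rho(\gamma)}{2n_{\Gamma}(\gamma)\sinh(\ell(\gamma)/2)}\,e^{-(s-1/2)\ell(\gamma)}=:F(s),
$$
valid for $\RE(s-1/2)^{2}\gg0$, the sum running over the hyperbolic conjugacy classes of $\widetilde{\Gamma}/\widetilde{Z}\simeq\Gamma$.

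It remains to identify $F(s)$ with $-\log Z(s;\rho)$. Differentiating the last display term by term in $s$ (legitimate on the same half-plane, by the exponential decay underlying the convergence estimates above) and comparing with the formula for the logarithmic derivative $L(s;\rho)$ recorded earlier gives $F'(s)=-L(s;\rho)=-\tfrac{d}{ds}\log Z(s;\rho)$, so $F(s)=-\log Z(s;\rho)+C$ for some constant $C$. Letting $\RE(s)\to+\infty$ forces $C=0$: the geodesic sum $F(s)$ tends to $0$ termwise with uniform domination, while each Euler factor $\det\bigl(\Id-\rho(\gamma)e^{-(s+k)\ell(\gamma)}\bigr)$ of $Z(s;\rho)$ tends to $1$ and the product converges uniformly, so $Z(s;\rho)\to1$ and $\log Z(s;\rho)\to0$. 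Since $Z(s;\rho)$ is meromorphic on $\C$ by Theorem \ref{mero}, the identity $\tfrac{d}{dz}|_{z=0}H_{\tau_m,\rho}(z;s)=-\log Z(s;\rho)$, once known on a right half-plane, extends to all $s\in\C$ by analytic continuation. Alternatively, one may bypass $L(s;\rho)$ and obtain $F(s)=-\log Z(s;\rho)$ directly by expanding $\log\det\bigl(\Id-\rho(\gamma)e^{-(s+k)\ell(\gamma)}\bigr)=-\sum_{j\ge1}j^{-1}\Tr(\rho(\gamma)^{j})e^{-j(s+k)\ell(\gamma)}$, summing the geometric series over $k\ge0$, and reindexing hyperbolic classes as powers of primitive ones.

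I do not expect a genuine obstacle here; the points needing care are the term-by-term differentiation in $z$ at the boundary value $z=0$ (covered by the uniform convergence established above), the $\RE(s)\to\infty$ limit used to annihilate the integration constant $C$ (which rests on the Euler product for $Z(s;\rho)$ converging to $1$), and, in the alternative direct route, the bookkeeping with $n_{\Gamma}(\gamma)$ when regrouping iterates of primitive geodesics.
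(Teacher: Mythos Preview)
Your proof is correct and follows essentially the same route as the paper: differentiate the $K$-Bessel representation using $(1/\Gamma)'(0)=1$, evaluate $K_{-1/2}$ explicitly, and identify the resulting geodesic sum with $-\log Z(s;\rho)$ before invoking analytic continuation. The only cosmetic difference is that the paper cites \cite{BFS23} directly for the identity $\sum_{[\gamma]\neq e}\frac{\Tr\rho(\gamma)}{2n_\Gamma(\gamma)\sinh(\ell(\gamma)/2)}e^{-(s-1/2)\ell(\gamma)}=-\log Z(s;\rho)$, whereas you recover it by differentiating to match $L(s;\rho)$ and killing the constant as $\RE(s)\to\infty$; both are fine.
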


\begin{proof}
  Assume first that $s\in\C$ with $\RE(s-1/2)^2\geq C$. Upon
  differentiating \eqref{H in terms of K Bessel} with respect to $z$ and taking $z=0$ we get that
  $$
  \frac{d}{dz}\bigg\vert_{z=0}H_{\tau_m,\rho}(z;s)=\frac{1}{\sqrt{4\pi}}\sum_{[\gamma]\neq e} \frac{l(\gamma)\Tr(\rho(\gamma))}{n_{\Gamma}(\gamma)\sinh(l(\gamma)/2)}
\left(\frac{l(\gamma)}{2s-1}\right)^{-1/2}K_{-1/2}((s-1/2)l(\gamma)).
  $$
Using the evaluation for $K_{-1/2}(x)$  from \cite{GR07}, formula 8.649.3, we get
$$
 \frac{d}{dz}\bigg\vert_{z=0}H_{\tau_m,\rho}(z;s)=\sum_{[\gamma]\neq e} \frac{\Tr(\rho(\gamma))}{2n_{\Gamma}(\gamma)\sinh(l(\gamma)/2)}
e^{-(s-1/2)l(\gamma)}=-\log Z(s;\rho),
$$
where the last equality follows from \cite{BFS23}, section 4. The claim now follows by the principle of analytic continuation.
\end{proof}

\vskip.10in

Let us now consider the functions $I_{\tau_m,\rho}(z;s)$ and $E_{\tau_m,\rho}(z;s)$ as defined in \eqref{intofid} and \eqref{intofell}, respectively.

\begin{prop}\label{idfinal}
For $\RE(s)\gg0$, the functions  $I_{\tau_m,\rho}(z;s)$ and $E_{\tau_m,\rho}(z;s)$
can be meromorphically continued to entire $z$-plane, and the continuations are holomorphic at $z=0$.
\end{prop}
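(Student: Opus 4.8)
The plan is to turn both functions into explicit integrals over the spectral parameter $\lambda$ and then read off their dependence on $z$. First I would observe that, since $m\in(-1,1]$, there is no odd integer $\ell$ with $1\le\ell<|m|$, so the finite sums over $\ell$ in \eqref{eq. Identity cont} and \eqref{eq. elliptic cont} are empty; together with the finiteness of the set of elliptic conjugacy classes this means that $I_{\tau_m,\rho}(t)$ and $E_{\tau_m,\rho}(t)$ are finite linear combinations of functions of the form $t\mapsto\int_\R e^{-t\lambda^2}F(\lambda)\,d\lambda$. In the elliptic case $F=F_\gamma$ with
\[
F_\gamma(\lambda)=\frac{\cosh\!\big(2(\pi-\theta(\gamma))\lambda\big)+e^{i\pi m}\cosh(2\theta(\gamma)\lambda)}{\cosh(2\pi\lambda)+\cos(\pi m)},
\]
which is even, bounded and --- because $\theta(\gamma)\in(0,\pi)$ --- decays exponentially; in the identity case $F=h_m$ with $h_m(\lambda)=\frac{\lambda\sinh(2\pi\lambda)}{\cosh(2\pi\lambda)+\cos(\pi m)}$, which is even and grows linearly. (Even without the normalization $m\in(-1,1]$ nothing is lost: the dropped $\ell$-sums contribute terms $e^{(\frac{|m|-\ell}{2})^2t}$ whose Laplace--Mellin transforms are $\big((s-\tfrac12)^2-(\tfrac{|m|-\ell}{2})^2\big)^{-z}$, which are entire in $z$.)

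Next, fix $s$ with $\RE(s)\gg 0$, so that $\beta:=(s-\tfrac12)^2$ has $\RE\beta>0$ and $\beta+\lambda^2\notin(-\infty,0]$ for every $\lambda\in\R$, and apply Fubini in \eqref{intofid}, \eqref{intofell}. Absolute integrability holds for $\RE(z)>1$ in the identity case (using $\int_\R e^{-t\lambda^2}h_m(\lambda)\,d\lambda\ll t^{-1}+1$) and for $\RE(z)>0$ in the elliptic case (where the inner $\lambda$-integral is uniformly bounded in $t$ by $\|F_\gamma\|_{L^1}$). Performing the inner $t$-integral via $\int_0^\infty e^{-t(\beta+\lambda^2)}t^{z-1}\,dt=\Gamma(z)(\beta+\lambda^2)^{-z}$ cancels the normalizing factor $1/\Gamma(z)$ and gives, on the respective half-planes,
\begin{align*}
I_{\tau_m,\rho}(z;s)&=\frac{\Vol(X)\dim(V_\rho)}{4\pi}\int_\R\frac{h_m(\lambda)}{(\beta+\lambda^2)^z}\,d\lambda,\\
E_{\tau_m,\rho}(z;s)&=\sum_{[\gamma]\textup{ ell.}}\frac{\Tr\rho(\gamma)}{4M(\gamma)\sin\theta(\gamma)}\int_\R\frac{F_\gamma(\lambda)}{(\beta+\lambda^2)^z}\,d\lambda.
\end{align*}

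For the elliptic function the claim follows at once: each $F_\gamma$ decays exponentially, so $\int_\R F_\gamma(\lambda)(\beta+\lambda^2)^{-z}\,d\lambda$ converges locally uniformly in $z$ on all of $\C$ and, by differentiation under the integral sign (justified by that exponential decay), is entire; hence $E_{\tau_m,\rho}(z;s)$ is entire in $z$, in particular holomorphic at $z=0$. For the identity function I would isolate the linear growth by writing $h_m(\lambda)=|\lambda|+\varepsilon_m(\lambda)$ with $\varepsilon_m(\lambda):=h_m(\lambda)-|\lambda|$; rewriting $\sinh,\cosh$ in exponential form shows $\varepsilon_m$ is bounded and decays exponentially (it has a corner at $\lambda=0$, but this is irrelevant to analyticity in $z$). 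Thus $\int_\R\varepsilon_m(\lambda)(\beta+\lambda^2)^{-z}\,d\lambda$ is entire in $z$ as before, while the elementary evaluation (for $\RE z>1$) $\int_\R|\lambda|(\beta+\lambda^2)^{-z}\,d\lambda=\int_0^\infty(\beta+v)^{-z}\,dv=\dfrac{\beta^{1-z}}{z-1}$ continues meromorphically to $\C$ with a single simple pole, at $z=1$. Consequently $I_{\tau_m,\rho}(z;s)$ extends meromorphically to $\C$ with at worst a simple pole at $z=1$, and is holomorphic at $z=0$, where $I_{\tau_m,\rho}(0;s)=\frac{\Vol(X)\dim(V_\rho)}{4\pi}\big(-\beta+\int_\R\varepsilon_m(\lambda)\,d\lambda\big)$.

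The Fubini verification and the bound $\int_\R e^{-t\lambda^2}h_m(\lambda)\,d\lambda\ll t^{-1}+1$ are routine. The one point deserving care is the splitting $h_m=|\lambda|+\varepsilon_m$ with $\varepsilon_m$ exponentially small: this is precisely the statement that the short-time divergence of $I_{\tau_m,\rho}(t)$ equals $\tfrac{\Vol(X)\dim(V_\rho)}{4\pi}\,t^{-1}$ and that everything else in $I_{\tau_m,\rho}(t)$ (and all of $E_{\tau_m,\rho}(t)$) has a short-time expansion in nonnegative integer powers of $t$ --- which is exactly what makes the continuations holomorphic, and not merely meromorphic, at $z=0$, consistently with the factor $1/\Gamma(z)$. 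Should one prefer to argue entirely in the spirit of Lemma \ref{lemma cont of det}, one can instead feed this short-time expansion together with the polynomial decay of $I_{\tau_m,\rho}(t)$ and $E_{\tau_m,\rho}(t)$ as $t\to\infty$ into the standard Mellin-transform continuation, the factor $1/\Gamma(z)$ then killing the poles at $z=0,-1,-2,\dots$.
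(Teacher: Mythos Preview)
Your proof is correct and follows essentially the same route as the paper: isolate the linear growth of the Plancherel density by writing $h_m(\lambda)=|\lambda|+\varepsilon_m(\lambda)$ with $\varepsilon_m$ exponentially small (the paper phrases this as adding and subtracting $t^{-1}=2\int_0^\infty\lambda e^{-t\lambda^2}\,d\lambda$, which produces exactly your $\varepsilon_m$ up to the change from $\int_\R$ to $2\int_0^\infty$), then interchange the $t$- and $\lambda$-integrals by Fubini and read off the $z$-dependence. The only cosmetic difference is that you exploit $m\in(-1,1]$ to drop the finite $\ell$-sums at the outset, whereas the paper carries them through and observes (as you also do parenthetically) that they contribute the entire functions $\big((s-\tfrac12)^2-(\tfrac{|m|-\ell}{2})^2\big)^{-z}$.
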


\begin{proof} Let
\begin{equation}\label{eq. c rho defn}
C_\rho(X):=\frac{\mathrm{dim}(V_\rho)\mathrm{vol}(X)}{4\pi}.
\end{equation}
  We start by observing that
  $$
 2 \int_0^\infty \lambda e^{-\lambda^2t}d\lambda=\frac{1}{t}.
  $$
By subtracting and adding the term $1/t$, we get that
$$
 I_{\tau_m,\rho}(t)=C_\rho(X)\left(\int_0^\infty  \frac{-4\lambda(1+e^{2\pi\lambda}\cos(\pi m))}{e^{4\pi\lambda} + 2e^{2\pi\lambda}\cos(\pi m) +1}  e^{-\lambda^2t}d\lambda +\frac{1}{t}+ \sum_{\substack{1\leq\ell<|m|\\
\ell\textup{ odd}}}(|m|-\ell)e^{(\frac{|m|-\ell}{2})^2t}\right).
 $$
Let
$$
f(m,\lambda):= \frac{4(1+e^{2\pi\lambda}\cos(\pi m))}{e^{4\pi\lambda} + 2e^{2\pi\lambda}\cos(\pi m) +1}= \frac{2}{1+e^{2\pi(\lambda+im/2)}}+\frac{2}{1+e^{2\pi(\lambda-im/2)}}.
$$
By the rapid decay of $f(m,\lambda)$ in $\lambda$,  we can apply Fubini-Tonelli theorem for $\RE(z)>1$  to deduce that
\begin{multline*}
I_{\tau_m,\rho}(z;s)=\frac{C_\rho(X)}{\Gamma(z)} \left( -\int_{0}^{\infty} \bigg(\int_{0}^{\infty} e^{-t((s-1/2)^2+\lambda^2)} t^{z-1} dt\bigg)\lambda f(m,\lambda) d\lambda +\right.\\
+ \left.\int_{0}^{\infty} e^{-t(s-1/2)^2} t^{z-2} dt + \sum_{\substack{1\leq\ell<|m|\\
\ell\textup{ odd}}}(|m|-\ell)  \int_{0}^{\infty} e^{-t((s-1/2)^2-(|m|-\ell)^2/4)} t^{z-1} dt  \right).
\end{multline*}
With the assumption that, $\RE(z)>1$ we can evaluate the above integrals in variable $t$. Specifically, we have that
$$
\int_{0}^{\infty} e^{-t((s-1/2)^2+\lambda^2)} t^{z-1} dt=\frac{\Gamma(z)}{((s-1/2)^2+\lambda^2)^z}
$$
as well as
$$
\int_{0}^{\infty} e^{-t(s-1/2)^2} t^{z-2} dt=\frac{\Gamma(z-1)}{(s-1/2)^{2(z-1)}}
$$
and
$$
\int_{0}^{\infty} e^{-t((s-1/2)^2-(|m|-\ell)^2/4)} t^{z-1} dt=\frac{\Gamma(z)}{((s-1/2)^2- (|m|-\ell)^2/4)^z}.
$$
Therefore,
\begin{align}\label{idfinalz}\notag
I_{\tau_m,\rho}(z;s)=C_\rho(X)\bigg(-\int_{0}^{\infty}&\frac{\lambda f(m,\lambda)d\lambda}{((s-1/2)^2+\lambda^2)^z} + \frac{(s-1/2)^{2-2z}}{(z-1)}
\\ & +\sum_{\substack{1\leq\ell<|m|\\
\ell\textup{ odd}}}\frac{(|m|-\ell) }{((s-1/2)^2- (|m|-\ell)^2/4)^z} \bigg).
\end{align}
The integral over $\lambda$ in the above display converges uniformly on every compact subset of the complex $z$-plane; hence,
it is analytic function of $z$ at $z=0$. Trivially, the second term and the third term in the above display are also holomorphic
at $z=0$. This proves the claim for $I_{\tau_m,\rho}(z;s)$.

The proof for $E_{\tau_m,\rho}(z;s)$ is analogous. We start by observing that $0<\theta(\gamma)<\pi$; hence, for $\RE(z)>1$, due to exponential decay of the integrand in the first term defining the elliptic contribution, it is possible to apply Fubini-Tonelli theorem and deduce that
\begin{align} \nonumber
E_{\tau_m,\rho}(z;s)=\sum_{[\gamma]\textup{ ell.}} \frac{\Tr\rho(\gamma)}{4M(\gamma)\sin(\theta(\gamma))}&\Bigg[\int_\R \frac{\cosh(2(\pi-\theta(\gamma))\lambda)+e^{i\pi m}\cosh(2\theta(\gamma)\lambda)}{(\cosh(2\pi\lambda)+\cos(\pi m))((s-1/2)^2 + \lambda^2)^z}\,d\lambda \\
		& +2i\sign(m)\sum_{\substack{1\leq\ell<|m|\\ \ell\textup{ odd}}}\frac{e^{i\sign(m)(|m|-\ell)\theta(\gamma)}}{((s-1/2)^2- (|m|-\ell)^2/4)^z} \Bigg].\label{eq. elliptic cont final}
\end{align}
Since $0<\theta(\gamma)<\pi$, the integral in the above display converges uniformly in any compact subset of the complex $z$-plane,
thus defines a holomorphic function at $z=0$. This completes the proof.
\end{proof}

\section{Expressing the determinant in terms of the twisted Selberg zeta function}\label{sec: with main thm}

We can now proceed to the proof of the main theorem of this article.

\begin{thm}\label{thm:main_theorem}
With the notation as above, the twisted determinant of the Laplacian $\mathrm{det}(\Delta_{\tau_m,\rho}^{\sharp} +s(s-1))$
and the twisted Selberg zeta function $Z(s;\rho)$ satisfy the relation
\begin{equation}\label{eq. det final}
\mathrm{det}(\Delta_{\tau_m,\rho}^{\sharp} +s(s-1))= Z(s;\rho)Z_I(s,\rho)Z_{\rm ell}(s;\rho)e^{\widetilde C},
\end{equation}
where the functions $Z_{I}(s;\rho)$ and $Z_{\rm ell}(s;\rho)$ are given by
\begin{align}\label{e:ZI_def}\notag
Z_{I}(s;\rho) = &\exp\bigg\{2C_\rho(X)\bigg(s\log(2\pi) + s(1-s) + \left(\frac{1+m}{2}\right) \log\Gamma\left(s+m/2\right)
\\ &+ \left(\frac{1-m}{2}\right) \log \Gamma\left(s-m/2\right) - \log G\left(s+m/2+1\right)-\log G\left(s-m/2+1\right)\bigg)\bigg\},
\end{align}
and
\begin{equation}\label{eq. Z ell final}
Z_{\rm ell}(s;\rho) = \prod_{j=1}^r \prod_{\ell=0}^{\nu_j-1}\bigg(\Gamma\left(\frac{s-\tfrac{m}{2}+\ell}{\nu_j}\right)^{C_m(j,\ell;\rho)} \Gamma\left(\frac{s+\tfrac{m}{2}+\ell}{\nu_j}\right)^{\tilde C_m(j,\ell;\rho) }\bigg)
\end{equation}
where the constant $C_\rho(X)$ is defined by \eqref{eq. c rho defn}, $C_m(j,\ell;\rho)$ and $\tilde C_m(j,\ell;\rho)$ are rational numbers defined by \eqref{eq. constant C_m def} and \eqref{eq. constant C_m tilde def}, and with the constant $\widetilde{C}$ given by \eqref{eq. tilde C defn}.
\end{thm}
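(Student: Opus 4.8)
The plan is to substitute the trace-formula decomposition \eqref{Lmtrcon} into the definition of the regularized determinant and read off the three pieces. For $\RE((s-1/2)^2)\gg 0$ and $\RE(z)>1$ we have $\LM[\Tr(e^{-tA^{\sharp}_{\tau_m,\rho}})]((s-1/2)^2;z)=I_{\tau_m,\rho}(z;s)+H_{\tau_m,\rho}(z;s)+E_{\tau_m,\rho}(z;s)$, and by Propositions \ref{hypfinal} and \ref{idfinal} each summand extends holomorphically across $z=0$; hence Lemma \ref{lemma cont of det} gives
\[
-\log\mathrm{det}(\Delta^{\sharp}_{\tau_m,\rho}+s(s-1))=\frac{d}{dz}\Big|_{z=0}\big(I_{\tau_m,\rho}(z;s)+H_{\tau_m,\rho}(z;s)+E_{\tau_m,\rho}(z;s)\big),
\]
with the middle term equal to $-\log Z(s;\rho)$ by Proposition \ref{hypfinal}. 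It therefore remains to show that $\frac{d}{dz}|_{z=0}I_{\tau_m,\rho}(z;s)=-\log Z_I(s;\rho)-\dim(V_\rho)\chi(X)(2\zeta'(-1)-\log\sqrt{2\pi})$ and $\frac{d}{dz}|_{z=0}E_{\tau_m,\rho}(z;s)=-\log Z_{\rm ell}(s;\rho)-\sum_{j=1}^r\frac{\log\nu_j}{2\nu_j}\sum_{k=1}^{\nu_j-1}\frac{\Tr(\rho(\gamma_j)^k)e^{i\pi km/\nu_j}}{\sin^2(\pi k/\nu_j)}$, after which one exponentiates and extends the resulting identity from $\RE(s)\gg 0$ to all $s$ by analytic continuation; a finite number of $\Gamma$-factors will need to be moved between the identity and elliptic contributions via Gauss's multiplication formula, so this attribution is to be understood up to that rearrangement.

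For the identity contribution I use that the discrete-series sum in \eqref{idfinalz} is empty, since $m\in(-1,1]$ leaves no odd $\ell$ with $1\le\ell<|m|$. Writing $w=s-\tfrac12$ and differentiating \eqref{idfinalz} at $z=0$ gives
\[
\frac{d}{dz}\Big|_{z=0}I_{\tau_m,\rho}(z;s)=C_\rho(X)\Big(2w^2\log w-w^2+\int_0^\infty \lambda f(m,\lambda)\log(w^2+\lambda^2)\,d\lambda\Big),
\]
whose first two terms already reproduce the leading asymptotics recorded in Lemma \ref{lemma cont of det}. The remaining integral is then evaluated in closed form: splitting $f(m,\lambda)=\frac{2}{1+e^{2\pi(\lambda+im/2)}}+\frac{2}{1+e^{2\pi(\lambda-im/2)}}$ and applying the Binet-type integral representations of $\log\Gamma$ and of the Barnes function $\log G$ in the normalization of \eqref{log der barnes gamma}--\eqref{asmBarnes} --- the analytic input behind \cite[pp.~66--67, p.~114]{FiBook87} and, for the trivial representation, \cite{sarnak1987determinants} --- expresses the integral through $\log\Gamma(s\pm m/2)$, $\log G(s\pm m/2+1)$, $\zeta'(-1)$ and $\log\sqrt{2\pi}$. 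Multiplying by $C_\rho(X)$ and invoking the Gauss--Bonnet identity $\Vol(X)=-2\pi\chi(X)$ (equivalently $2C_\rho(X)=-\dim(V_\rho)\chi(X)$) then yields $-\log Z_I(s;\rho)$ together with the first summand $\dim(V_\rho)\chi(X)(2\zeta'(-1)-\log\sqrt{2\pi})$ of $\widetilde C$.

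For the elliptic contribution the discrete-series sum in \eqref{eq. elliptic cont final} is again empty, so differentiating at $z=0$ produces $-\sum_{[\gamma]\text{ ell.}}\frac{\Tr\rho(\gamma)}{4M(\gamma)\sin\theta(\gamma)}\int_\R\frac{\cosh(2(\pi-\theta(\gamma))\lambda)+e^{i\pi m}\cosh(2\theta(\gamma)\lambda)}{\cosh(2\pi\lambda)+\cos(\pi m)}\log(w^2+\lambda^2)\,d\lambda$. Grouping the elliptic classes by the $r$ cone points, so that the $j$-th point contributes the powers $\gamma_j^k$, $k=1,\dots,\nu_j-1$, with $\theta(\gamma_j)=\pi/\nu_j$, I evaluate each $\lambda$-integral by shifting the contour into the upper half-plane: the poles of $(\cosh(2\pi\lambda)+\cos\pi m)^{-1}$ at $\lambda=\tfrac{i}{2}(2\ell+1\pm m)$, $\ell\ge0$, together with the branch point of $\log(w^2+\lambda^2)$ at $\lambda=iw$, turn the integral into Hurwitz-zeta type sums $\tfrac{1}{\nu_j}\,\nu_j^{-z}\sum_{\ell=0}^{\nu_j-1}\alpha_j(\ell)\zeta_H(z,\tfrac{s-m/2+\ell}{\nu_j})$ and $\tfrac{1}{\nu_j}\,\nu_j^{-z}\sum_{\ell=0}^{\nu_j-1}\tilde\alpha_j(\ell)\zeta_H(z,\tfrac{s+m/2+\ell}{\nu_j})$ (with $\zeta_H(z,a)=\sum_{n\ge0}(n+a)^{-z}$), weighted by the eigenvalue counts $\alpha_j(\ell),\tilde\alpha_j(\ell)$ of $\rho(\gamma_j)$. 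Differentiating at $z=0$ via Lerch's formula $\partial_z\zeta_H(0,a)=\log(\Gamma(a)/\sqrt{2\pi})$ and $\zeta_H(0,a)=\tfrac12-a$, and then substituting \eqref{eq. alpha l rep}--\eqref{eq. alpha tilde l rep}, separates the outcome into: a fluctuating part carrying the weights $C_m(j,\ell;\rho),\tilde C_m(j,\ell;\rho)$, which --- using \eqref{eq. sum of Cm s} to discard the $\log\sqrt{2\pi}$ constants --- assembles into the finite product $Z_{\rm ell}(s;\rho)$ of \eqref{eq. Z ell final}; a uniform part proportional to $\dim(V_\rho)(\nu_j-1)$, which by Gauss's multiplication formula $\prod_{\ell=0}^{\nu_j-1}\Gamma(\tfrac{x+\ell}{\nu_j})=(2\pi)^{(\nu_j-1)/2}\nu_j^{1/2-x}\Gamma(x)$ contributes $\Gamma(s\pm m/2)$-factors that merge into the identity contribution; and a residual constant, in which the $s$-dependent $\log\nu_j$ terms cancel between the two subcomputations and the remainder, after the evaluation $\sum_{\ell=0}^{\nu_j-1}\ell\,e^{\pm i\pi k(2\ell+1)/\nu_j}=\pm\frac{\nu_j}{2i\sin(\pi k/\nu_j)}$, equals $\sum_{j=1}^r\frac{\log\nu_j}{2\nu_j}\sum_{k=1}^{\nu_j-1}\frac{\Tr(\rho(\gamma_j)^k)e^{i\pi km/\nu_j}}{\sin^2(\pi k/\nu_j)}$, i.e.\ the second summand of $\widetilde C$. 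Summing the three contributions and exponentiating gives \eqref{eq. det final}.

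The main obstacle is the two exact integral evaluations. For the identity term one must pin down the Barnes $G$-function and the constants $\zeta'(-1)$, $\log\sqrt{2\pi}$ precisely --- the same analytic input as in Sarnak's untwisted computation, now carried out with the shifted arguments $s\pm m/2$. For the elliptic term the delicate point is the contour and residue analysis of $\int_\R\frac{(\cdots)}{\cosh(2\pi\lambda)+\cos\pi m}\log(w^2+\lambda^2)\,d\lambda$ and the bookkeeping that converts the resulting pole sums into the Hurwitz-zeta form with the correct weights $\alpha_j(\ell),\tilde\alpha_j(\ell)$, together with the trigonometric summations needed to recognize the constant term and the correct distribution of $\Gamma$-factors between $Z_I$ and $Z_{\rm ell}$.
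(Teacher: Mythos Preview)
Your overall architecture is correct and matches the paper's: decompose the Laplace--Mellin transform into $I+H+E$, use Proposition~\ref{hypfinal} for the hyperbolic piece, and identify the other two with $-\log Z_I$ and $-\log Z_{\rm ell}$ plus the constants making up $\widetilde C$. Where you differ is in \emph{how} the identity and elliptic integrals are evaluated and how the constant $\widetilde C$ is obtained.

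The paper does not attempt to evaluate the integrals $\int_0^\infty \lambda f(m,\lambda)\log(w^2+\lambda^2)\,d\lambda$ or the analogous elliptic log-integrals directly. Instead it takes one further $s$-derivative, which removes the logarithm and leaves rational integrands $\frac{\lambda f(m,\lambda)}{w^2+\lambda^2}$ and $\frac{(\cdots)}{(\cosh 2\pi\lambda+\cos\pi m)(w^2+\lambda^2)}$. The identity integral is then computed by a contour argument yielding $\psi(s+m/2)+\psi(s-m/2)$, and \eqref{log der barnes gamma} matches this to $-\tfrac{d}{ds}\log Z_I(s;\rho)$; the elliptic integral is evaluated by quoting Hejhal's Lemma~2.7 and Fischer's Corollary~2.3.5, giving $-\tfrac{d}{ds}\log Z_{\rm ell,0}(s;\rho)$, and Gauss multiplication strips off the uniform $\Gamma(s\pm m/2)$-factors to reach $Z_{\rm ell}$. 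This determines everything only up to an additive constant, which the paper then pins down by comparing the large-$s$ asymptotics of both sides using Lemma~\ref{lemma cont of det} and the expansion \eqref{asmBarnes}.

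Your route---evaluating the log-integrals in closed form via Binet-type identities for the identity part, and via a Hurwitz-zeta/Lerch computation for the elliptic part---is a legitimate alternative that in principle produces $\widetilde C$ directly rather than by asymptotic matching. It is more self-contained (no appeal to Hejhal or Fischer) but the integral evaluations are correspondingly more delicate; the paper's $s$-derivative trick trades that delicacy for a separate, routine asymptotic step. One expositional point: in your elliptic paragraph you first say you differentiate at $z=0$ (producing $\log(w^2+\lambda^2)$) and then write the result as Hurwitz sums $\nu_j^{-z}\sum_\ell \alpha_j(\ell)\zeta_H(z,\cdot)$ still carrying $z$. What you presumably intend is to rewrite $E_{\tau_m,\rho}(z;s)$ itself in Hurwitz form \emph{before} differentiating and then apply Lerch's formula; you should state it in that order so the argument is internally consistent.
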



\begin{proof}

Assume $\RE(s)\gg 0$. By combining \eqref{Lmtrcon}, \eqref{intofhyp}, \eqref{intofid}, \eqref{intofell} Propositions \ref{hypfinal}
and \ref{idfinal}, we have
\begin{align}\notag
\frac{d}{dz}\LM\big[\Tr(e^{-tA^{\sharp}_{\tau_m,\rho}})\big]((s-1/2)^2;z)\bigg\vert_{z=0}&=-\log \mathrm{det}(\Delta_{\tau_m,\rho}^{\sharp} +s(s-1))\\&= -\log Z(s;\rho) + \frac{d}{dz}\left(I_{\tau_m,\rho}(z;s)+ E_{\tau_m,\rho}(z;s)\right)\bigg\vert_{z=0} .\label{eq. det pre-final}
\end{align}
Let us use \eqref{idfinalz} and compute the derivative of $I_{\tau_m,\rho}(z;s)$ at $z=0$.
Specifically, we have that
\begin{align*}
\frac{d}{dz}I_{\tau_m,\rho}(z;s)\bigg\vert_{z=0}&= C_\rho(X)\bigg[ (s-1/2)^2(2\log(s-1/2) -1) \\&+ \int_0^\infty \lambda f(m,\lambda) \log((s-1/2)^2+\lambda^2)d\lambda\\&-  \sum_{\substack{1\leq\ell<|m|\\ \ell\textup{ odd}}}(|m|-\ell)  \log((s-1/2)^2- (|m|-\ell)^2/4)\bigg].
\end{align*}
The above functions are holomorphic in $s$ for $\RE(s)>1$.  As such, we can differentiate the above identity with respect to $s$ to deduce that
\begin{multline*}
\frac{1}{2s-1}\frac{d}{ds}\left(\frac{d}{dz}I_{\tau_m,\rho}(z;s)\bigg\vert_{z=0}\right) \\= C_\rho(X)\left(\log(s-1/2)^2 + \int_0^{\infty}\frac{\lambda f(m,\lambda) d\lambda}{(\lambda^2 + (s-1/2)^2)} -  \sum_{\substack{1\leq\ell<|m|\\ \ell\textup{ odd}}} \frac{(|m|-\ell) }{((s-1/2)^2- (|m|-\ell)^2/4)}\right).
\end{multline*}
We claim that
\begin{align}\notag
\log(s-1/2)^2 + \int_0^{\infty}&\frac{\lambda f(m,\lambda) d\lambda}{(\lambda^2 + (s-1/2)^2)} -  \sum_{\substack{1\leq\ell<|m|\\ \ell\textup{ odd}}} \frac{(|m|-\ell) }{((s-1/2)^2- (|m|-\ell)^2/4)} \\&
= \psi\left(s+\frac{m}{2}\right) + \psi\left(s-\frac{m}{2}\right),\label{eq:digamma}
\end{align}
where $\psi$ denotes the digamma function.  Recall that $\psi(s)= \log s +O(s^{-1})$ as $s\to\infty$.
With this, it suffices to prove the claim by showing that the derivatives on the two sides of \eqref{eq:digamma} are equal.
In other words, the claim reduces to the assertion that
\begin{align*}
\frac{1}{(s-1/2)^2} &+\sum_{\substack{1\leq\ell<|m|\\ \ell\textup{ odd}}} \frac{(|m|-\ell) }{((s-1/2)^2- (|m|-\ell)^2/4)^2} - \int_0^{\infty}\frac{\lambda f(m,\lambda) d\lambda}{(\lambda^2 + (s-1/2)^2)^2}\\& =\frac{1}{2s-1}\left[\psi'\left(s+\frac{m}{2}\right) + \psi'\left(s-\frac{m}{2}\right)\right].
\end{align*}

From the identity
$$
\frac{1}{(s-1/2)^2}=\int_0^{\infty}\frac{2\lambda d\lambda}{(\lambda^2 + (s-1/2)^2)^2}
$$
we have that
\begin{align*}
\frac{1}{(s-1/2)^2} &- \int_0^{\infty}\frac{\lambda f(m,\lambda) d\lambda}{(\lambda^2 + (s-1/2)^2)^2}\\&=
\frac{-1}{2i(2s-1)} \int_0^{\infty}\left(\frac{1}{(\lambda+i(s-1/2))^2} -  \frac{1}{(\lambda-i(s-1/2))^2}\right)(2-f(m,\lambda))d\lambda\\&=
\frac{-1}{2i(2s-1)} (I^+(s,m) - I^-(s,m)),
\end{align*}
where, just to be clear,
$$
I^{\pm}(s,m) =  \int_0^{\infty}F^{\pm}(\lambda)d\lambda
\,\,\,\,\,
\textrm{\rm with}
\,\,\,\,\,
F^{\pm}(z)= \frac{2-f(m,z)}{(z\pm i(s-1/2))^2}
$$
In order to compute $I^+(s,m)$, consider the integral of
$F^+(z)$
along the contour $C^+(R)$ which consists of the real line from $0$ to $R$, then along
the  circle $|z|=R$ with $\arg z$ ranging from $0$ to $\pi/2$, and then along the imaginary line segment $\arg z=\pi/2$ from $iR$ to $0$.
Actually, this is a regularized integral since the function
\begin{equation}\label{eq. 2-f}
2-f(m,z)=2\left[1-\frac{1}{1+e^{2\pi(\lambda+im/2)}}-\frac{1}{1+e^{2\pi(\lambda-im/2)}}\right]
\end{equation}
possesses simple poles at $\lambda=i(1/2+k\pm m/2)$ for $k\in\mathbb{Z}$ each with residue equal to $1/\pi$.
The regularization consists of the standard principal value (PV) method, namely that the
the integration path along the imaginary axes is deformed by replacing
the part of the imaginary axes at a very small distance $\varepsilon>0$ from the pole by half-circles of radius $\varepsilon>0$
centered at the pole and belonging to the first quadrant.
After letting $\varepsilon\to 0$ the integral along the half circle tends to the residue at this simple
pole multiplied by $-\pi i$, where the minus sign reflects the orientation of the path of integration. If $m=1$, the pole at $\lambda=0$ will be multiplied by $-i\pi/2$. However, in this case $\lambda=0$ will appear as pole in two terms appearing on the right-hand side of \eqref{eq. 2-f}. By following this approach, we get
for any $R>0$ such that $R\neq 1/2+k\pm m/2$ for some $k\in\mathbb{Z}$ that
\begin{align*}
&\int\limits_{C^+(R)}F^+(z)dz= \int\limits_0^R F^+(z)dz + \int\limits_{|z|=R; \, \arg z\in(0,\pi/2)} F^+(z)dz + \frac{i\delta_{m=1}}{(s-1/2)^2}\\ - &\pi i \left(\sum_{k\in\mathbb{Z}:(1/2+k-m/2)\in(0, R)} \frac{-1}{\pi(k+s-m/2)^2}+ \sum_{k\in\mathbb{Z} :(1/2+k+m/2)\in(0, R)} \frac{-1}{\pi(k+s+m/2)^2} \right),
\end{align*}
where $\delta_{m=1}$ equals $1$ if $m=1$ and equals zero, otherwise.

After letting $R\to \infty$ we conclude that
\begin{align*}
I^{+}(s,m) = \int_0^{\infty}F^{+}(z)dz=\pi i &\left(\sum_{k\in\mathbb{Z}:(1/2+k-m/2)\geq 0}
\frac{-1}{\pi(k+s-m/2)^2} \right.\\& \left.+  \sum_{k\in\mathbb{Z} :(1/2+k+m/2)> 0} \frac{-1}{\pi(k+s+m/2)^2} \right)
\end{align*}
Analogously, we consider the integral of the function
$$
F^-(z)= \frac{2-f(m,z)}{(z-i(s-1/2))^2}
$$
in negative direction along the contour $C^-$ which is defined as the reflection of the contour $C^+$ with respect to the $x$-axes,
also in the regularized sense as explained above.  Upon doing so, we conclude that
\begin{align*}
I^{-}(s,m)=\int_0^{\infty}F^{-}(z)dz=- \pi i &\left(\sum_{k\in\mathbb{Z}:(-1/2-k-m/2)< 0} \frac{-1}{\pi(k+s+m/2)^2} +\right. \\& \left.+  \sum_{k\in\mathbb{Z} :(-1/2-k+m/2)\leq 0} \frac{-1}{\pi(k+s-m/2)^2} \right).
\end{align*}
Therefore
\begin{align*}
&\frac{-1}{2i(2s-1)}(I^+(s,m) - I^-(s,m))\\&=
\frac{1}{(2s-1)}\left(\sum_{k\in\mathbb{Z}:(1/2+k-m/2)\geq 0} \frac{1}{(k+s-m/2)^2} +  \sum_{k\in\mathbb{Z} :(1/2+k+m/2)> 0} \frac{1}{(k+s+m/2)^2} \right),
\end{align*}
hence
\begin{align*}
\frac{-1}{2i(2s-1)} (I^+(s,m) - I^-(s,m)) & +  \sum_{\substack{1\leq\ell<|m|\\ \ell\textup{ odd}}} \frac{(|m|-\ell) }{((s-1/2)^2- (|m|-\ell)^2/4)^2}
\\&=  \frac{1}{(2s-1)}\left(\sum_{k= 0}^{\infty} \frac{1}{(k+s-m/2)^2} +  \sum_{k= 0}^{\infty} \frac{1}{(k+s+m/2)^2} \right)\\
&=\frac{1}{2s-1}\left[\psi'\left(s+\frac{m}{2}\right) + \psi'\left(s-\frac{m}{2}\right)\right].
\end{align*}
In summary, we have proved that
\begin{equation*}
\frac{d}{ds}\left(\frac{d}{dz}I_{\tau_m,\rho}(z;s)\bigg\vert_{z=0}\right)= C_\rho(X)(2s-1)\left(\psi\left(s+\frac{m}{2}\right) + \psi\left(s-\frac{m}{2}\right)\right).
\end{equation*}
On the other hand, from \cite{FiBook87}, Remark 3.1.3 on p. 114 with $k=m/2$, we see that\footnote{The identity \eqref{eq. identity term pre-comp} follows from \eqref{log der barnes gamma}}
\begin{equation} \label{eq. identity term pre-comp}
\frac{d}{ds}\left(\frac{d}{dz}I_{\tau_m,\rho}(z;s)\bigg\vert_{z=0}\right)=-\frac{d}{ds}
\log Z_{I}(s;\rho)
\end{equation}
which is the expression needed to evaluate the identity contribution in \eqref{eq. det pre-final}.

Let us now deal with the elliptic contribution in \eqref{eq. det pre-final}. To begin, note that
 \begin{align}\label{eq. ell cont beginning} \notag
-\frac{1}{2s-1}\frac{d}{ds}& \left(\frac{d}{dz}E_{\tau_m,\rho}(z;s)\bigg\vert_{z=0}\right) \\&= \notag \sum_{[\gamma]\textup{ ell.}}\frac{\Tr\rho(\gamma)}{4M(\gamma)\sin(\theta(\gamma))}\times \Bigg[\int_\R \frac{\cosh(2(\pi-\theta(\gamma))\lambda)+e^{i\pi m}\cosh(2\theta(\gamma)\lambda)}{(\cosh(2\pi\lambda)+\cos(\pi m))((s-1/2)^2 + \lambda^2)}\,d\lambda \\& \hspace{4mm}
		+2i\sign(m)\sum_{\substack{1\leq\ell<|m|\\ \ell\textup{ odd}}}\frac{e^{i\sign(m)(|m|-\ell)\theta(\gamma)}}{((s-1/2)^2- (|m|-\ell)^2/4)} \Bigg].
\end{align}
By applying Lemma 2.7 from p. 432 of \cite{He} for $\RE(s)\gg 0$ we conclude that
 \begin{multline*}
-\frac{1}{2s-1}\frac{d}{ds}\left(\frac{d}{dz}E_{\tau_m,\rho}(z;s)\bigg\vert_{z=0}\right)= \sum_{[\gamma]\textup{ ell.}}\frac{\Tr\rho(\gamma)}{4M(\gamma)\sin(\theta(\gamma))} \frac{i}{s-1/2}\times \\ \sum_{j=0}^{\infty}\left(\frac{e^{-2i\theta(\gamma)(j-m/2+1/2)}}{j-m/2+s} - \frac{e^{2i\theta(\gamma)(j+m/2+1/2)}}{j+m/2+s}\right).
\end{multline*}
Let us now further simplify the above expression. To do so, enumerate the inconjugate elliptic elements by $\gamma_1,\ldots, \gamma_r$.
Then $\theta(\gamma_k)=\pi l/\nu_j$ for some $l\in\{1,\ldots,\nu_j-1\}$. From p. 58--59 of  \cite{FiBook87}  we have that
\eqref{eq. ell cont beginning} is equal to
 \begin{align*}
 \sum_{j=1}^r \frac{\Tr\rho(\gamma_j)}{2\nu_j\sin(\theta(\gamma_j))} \frac{ie^{i\theta(\gamma_j)m}}{2s-1}\frac{1}{\nu_j} \sum_{\ell=0}^{\nu_j-1}\bigg(&e^{i\theta(\gamma_j)(2\ell+1)}\psi\left(\frac{s+m/2+1}{\nu_j}\right)
\\&-
e^{-i\theta(\gamma_j)(2\ell+1)}\psi\left(\frac{s-m/2+1}{\nu_j}\bigg)
\right).
\end{align*}
The expression on the right-hand side of the above equation coincides with the right-hand side of the last display on
p. 61 of \cite{FiBook87} with $2k=m$, and the elliptic conjugacy clases denoted by $R$ instead of $\gamma$. Therefore, $\frac{-1}{2s-1}\frac{d}{ds}\left(\frac{d}{dz}E_{\tau_m,\rho}(z;s)\bigg\vert_{z=0}\right)$ equals the integral over the fundamental domain of the elliptic contribution to the Green's function as studied in \cite{FiBook87}.  In particular, from Corollary 2.3.5 on page 68 of \cite{FiBook87} we have that
\begin{equation}\label{eq. ell cont final}
-\frac{1}{2s-1}\frac{d}{ds}\left(\frac{d}{dz}E_{\tau_m,\rho}(z;s)\bigg\vert_{z=0}\right)= -\frac{1}{2s-1}\frac{Z'_{\rm ell,0}(s;\rho)}{Z_{\rm ell,0}(s;\rho)}(s),
\end{equation}
where
\begin{align}\label{e:Zell_def}\notag
Z_{\rm ell,0}(s;\rho) = \prod_{j=1}^r &\bigg\{\nu_j^{\dim(V_\rho)(1-\frac{1}{\nu_j})s} \left(\Gamma\left(s-\tfrac{m}{2}\right)\Gamma\left(s+\tfrac{m}{2}\right)\right)^{-\frac{1}{2}\dim(V_\rho) (1-\frac{1}{\nu_j})}
\\ &\times \prod_{\ell=0}^{\nu_j-1} \bigg(\Gamma\left(\frac{s-\tfrac{m}{2}+\ell}{\nu_j}\right)^{\frac{\alpha_j(\ell)}{\nu_j} } \Gamma\left(\frac{s+\tfrac{m}{2}+\ell}{\nu_j}\right)^{\frac{\tilde{\alpha}_j(\ell)}{\nu_j} }\bigg)\bigg\}.
\end{align}

Next, we will transform \eqref{e:Zell_def} by using \eqref{eq. alpha l rep} nd \eqref{eq. alpha tilde l rep}. First, by applying the product formula for the Gamma function (see, e.g. \cite{GR07}, formula 8.335) one has that
\begin{align*}
\nu_j^{\frac{1}{2}\dim(V_\rho)(1-\frac{1}{\nu_j})s}& \Gamma\left(s\pm\tfrac{m}{2}\right)^{-\frac{1}{2}\dim(V_\rho) (1-\frac{1}{\nu_j})}
\prod_{\ell=0}^{\nu_j-1} \Gamma\left(\frac{s\pm\tfrac{m}{2}+\ell}{\nu_j}\right)^{\frac{1}{2}\dim(V_\rho) (1-\frac{1}{\nu_j}) } \\ &= \left((2\pi)^{\frac{\nu_j-1}{2}}\nu_j^{1/2\mp m/2}\right)^{\frac{\dim(V_\rho)(\nu_j-1)}{2\nu_j}}.
\end{align*}
From \eqref{e:Zell_def}, in view of definitions \eqref{eq. alpha l rep} and \eqref{eq. alpha tilde l rep} now we deduce that
$$
Z_{\rm ell,0}(s;\rho) = Z_{\rm ell}(s;\rho) \prod_{j=1}^r\left((2\pi)^{\nu_j-1}\nu_j\right)^{\frac{\dim(V_\rho)(\nu_j-1)}{2\nu_j}}.
$$

With all this, upon combining with \eqref{eq. det pre-final}, \eqref{eq. identity term pre-comp} and  \eqref{eq. ell cont final}
we arrive at the formula that
\begin{equation}\label{eq:main_formula_with_constant}
\log\mathrm{det}(\Delta_{\tau_m,\rho}^{\sharp} +s(s-1))=\log Z(s;\rho)+\log Z_I(s;\rho) + \log Z_{\rm ell}(s;\rho) + \widetilde{C}_1,
\end{equation}
for some constant $\widetilde{C}_1$. The constant $\widetilde{C}_1$ can be evaluated by determining the asymptotic behavior
of \eqref{eq:main_formula_with_constant} for real $s$ tending to infinity.  The asymptotic behavior of the determinant out to $o(1)$
comes from Lemma \ref{lemma cont of det}. Therefore, the constant $\widetilde C_1$ is to be determined so that the constant term in the asymptotic expansion as $s\to\infty$ of the right-hand side of \eqref{eq:main_formula_with_constant} equals zero.

The formulas for $Z_{I}$ and $Z_{\rm ell}$, namely \eqref{e:ZI_def}
and \eqref{eq. Z ell final}, are readily available to determine their asymptotic expansions out to $o(1)$.  To do so for the identity contribution, one needs \eqref{asmBarnes} and the expansion from \cite[8.344]{GR07} of the function $\log\Gamma(s)$. The constant term in the asymptotic expansion of the term \eqref{eq. Z ell final} is easily computed by combining the expansion from \cite[8.344]{GR07} of the function $\log\Gamma(s)$ with the identity \eqref{eq. sum of Cm s} and the identity
$$
\sum_{\ell=0}^{\nu_j-1}\ell\sin\left(\frac{\pi k}{\nu_j}(2\ell+1)\right)=-\frac{\nu_j}{2\sin\left(\frac{\pi k}{\nu_j}\right)}.
$$
It is immediate that $\log Z(s;\rho)$ is $o(1)$ for large real $s$.
When combining all of these expansions, one gets that $\widetilde{C}_1$ is given as stated
in \eqref{eq. tilde C defn}, as well as the values for $\alpha_{0}$
and $\alpha_{1}$ in Lemma \ref{lemma cont of det}.

With all this, the proof is complete.
\end{proof}

\section{Analysis of the torsion factor for some representations}

We now turn our attention to the torsion factor \eqref{eq. tilde C defn} .
The torsion factor is well defined for both cyclic\footnote{Recall that $\rho$ is cyclic if $\rho(u)=I_n$ meaning that $m=0$.} and acyclic representations $\rho$, hence we expect it describes a torsion different from the Reidemeister torsion.

\begin{ex} \label{ex:Yamaguchi} \rm
Let us compute the torsion factor associated to the even-dimensional representation $\rho_{2N}:
\pi_1(X_1) \to\mathrm{SL}_{2N}(\mathbb{C})$ that was studied by Yamaguchi in \cite{Ya22}. For such a
representation $\rho_{2N}$, we have $m=1$ and
\begin{equation}\label{eq:yamaguchi_example}
\mathrm{Tr}(\rho(\gamma_j)^k)e^{i\pi k/\nu_j}=\sum_{p=-(N-1)}^{N}e^{2\pi i p k/\nu_j}
=\sum_{p=1}^{N}e^{2\pi i p k/\nu_j} + \sum_{p=0}^{N-1}e^{-2\pi i p k/\nu_j}
\end{equation}
for $k=1,\ldots, \nu_j-1$.  If $\nu_j\mid N$ then \eqref{eq:yamaguchi_example} is zero, so let us now
assume $\nu_j$ is not a factor of $N$.  Let $r_j(N):= 2N-\nu_j\lfloor \frac{2N}{\nu_j}\rfloor$ where $\lfloor x \rfloor$ denotes the
integer part of a real number $x$. Moreover, set
\begin{multline*}
A_j(N)= \bigg\{r_j\in \{0,1,\ldots\nu_j-1\} : \\
r_j\equiv \left(N-\nu_j\lfloor \frac{2N}{\nu_j}\rfloor-q \right)({\rm mod} \, \nu_j)\,\,\, \text{for some}\,\,\, q=0,\ldots,r_j(N)-1 \bigg\}.
\end{multline*}
Then, a simple argument based on counting residue classes modulo $\nu_j$ among the elements of the
set $\{-(N-1), -(N-2),\ldots, 0, \ldots, (N-1), N)\}$ yields that
\begin{align*}
\mathrm{Tr}(\rho(\gamma_j)^k)e^{i\pi k/\nu_j} &= \left(\lfloor \frac{2N}{\nu_j}\rfloor+1\right) \sum_{r_j\in A_j(N) } e^{2\pi i r_j k/\nu_j} + \lfloor \frac{2N}{\nu_j}\rfloor \sum_{r_j\in A_j(N)^c} e^{2\pi i r_j k/\nu_j}\\ &= \sum_{r_j\in A_j(N) } e^{2\pi i r_j k/\nu_j},
\end{align*}
where $A_j(N)\cup A_j(N)^c=\{0,1,\ldots,\nu_j-1\}$.

For a fixed $r_j \in \{0,1,\ldots,\nu_j-1\}$, we use the recurrent formula from Corollary 2 of \cite{JKS24}.
Specifically, in the notation of \cite{JKS24}, we set $r_{j}=r$, $n=1$ and $m=\nu_{j}$.  Then when comparing equations (4)
and (9) of \cite{JKS24}, we have that
$$
\sum_{k=1}^{\nu_j-1}\frac{ e^{2\pi i r_j k/\nu_j}}{\sin^2\left(\frac{\pi k}{\nu_j}\right)}=2\nu_j c_{\nu_{j},r_j}(0).
$$
Now, we use equations (10), (50) and (51) of \cite{JKS24}  to get that
$$
2\nu_j c_{m,r_j}(0)=\frac{\nu_j^2-6\nu_jr_j+6r_j^2-1}{3}.
$$
Therefore, the torsion factor for the representation $\rho_{2N}$ equals
\begin{equation}\label{eq:Yam_torsion}
  \widetilde C =-2N\chi(X)\left(2\zeta'(-1) - \log\sqrt{2\pi} \right) + \sum_{j=1}^{r}\frac{\log\nu_j}{2\nu_j} \sum_{r_j\in A_j(N)} \frac{\nu_j^2-6\nu_jr_j+6r_j^2-1}{3}.
\end{equation}
It is interesting to note that the behavior of the torsion factor in
\eqref{eq:Yam_torsion}
as $N\to\infty$ is similar to that obtained
in Section 3.4 of \cite{Ya22}.  Specifically, it is trivial to observe that
\begin{equation} \label{eq. lim of tilde C}
\lim_{N\to\infty}\frac{ \widetilde C}{2N}=-\chi(X)\left(2\zeta'(-1) - \log\sqrt{2\pi} \right),
\end{equation}
which is a constant depending only upon the topological data related to $X$ namely the Euler characteristic.
\end{ex}

Indeed, \eqref{eq. lim of tilde C} holds in further generality, which we now state.

\begin{coro}
  With the notation as above consider a sequence of representations $\{\rho_{N}\}_{N\geq 1}$, such that $\mathrm{Tr}
  (\rho_{N}(\gamma_j))$ is uniformly bounded by some constant $C(X)$ which is independent of $j$ and $N$.Assume also that the sequence of dimensions $\{\dim (V_{\rho_N})\}_{N\geq 1}$ of representations $\rho_N$ is such that $\lim_{N\to\infty} \dim (V_{\rho_N})=\infty$. We
  then have that
  \begin{equation*}
\lim_{N\to\infty}\frac{ \widetilde C}{\dim (V_{\rho_N})}=-\chi(X)\left(2\zeta'(-1) - \log\sqrt{2\pi} \right).
\end{equation*}
  \end{coro}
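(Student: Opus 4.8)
The plan is to read the corollary straight off the closed formula \eqref{eq. tilde C defn} for the torsion factor. Dividing that identity by $\dim(V_{\rho_N})$ gives
\[
\frac{\widetilde C}{\dim(V_{\rho_N})} = \chi(X)\bigl(2\zeta'(-1)-\log\sqrt{2\pi}\bigr) + \frac{1}{\dim(V_{\rho_N})}\sum_{j=1}^{r}\frac{\log\nu_j}{2\nu_j}\sum_{k=1}^{\nu_j-1}\frac{\mathrm{Tr}(\rho_N(\gamma_j)^k)\,e^{i\pi km/\nu_j}}{\sin^2(\pi k/\nu_j)},
\]
so the first term on the right is already the asserted limit, and the whole statement reduces to showing that the second term is $o(1)$ as $N\to\infty$. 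First I would record that the data $r$, the orders $\nu_j$, and hence the numbers $\log\nu_j$ and the values $\sin^2(\pi k/\nu_j)$ for $1\le k\le\nu_j-1$ depend only on the orbisurface $X$ and not on $N$; in particular they form a finite collection of strictly positive constants.

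Next I would estimate the inner double sum. Using $|e^{i\pi km/\nu_j}|=1$ together with the uniform bound $|\mathrm{Tr}(\rho_N(\gamma_j)^k)|\le C(X)$ (which I take as the content of the hypothesis; see the remark below), one obtains
\[
\left|\,\sum_{j=1}^{r}\frac{\log\nu_j}{2\nu_j}\sum_{k=1}^{\nu_j-1}\frac{\mathrm{Tr}(\rho_N(\gamma_j)^k)\,e^{i\pi km/\nu_j}}{\sin^2(\pi k/\nu_j)}\,\right| \;\le\; C(X)\sum_{j=1}^{r}\frac{\log\nu_j}{2\nu_j}\sum_{k=1}^{\nu_j-1}\frac{1}{\sin^2(\pi k/\nu_j)} \;=:\; B(X),
\]
a finite constant depending only on $X$. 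Dividing through by $\dim(V_{\rho_N})$ and invoking $\dim(V_{\rho_N})\to\infty$, the second term in the first display tends to $0$, which completes the proof.

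The computation itself is immediate; the one point that deserves care is that the uniform bound is needed for the traces $\mathrm{Tr}(\rho_N(\gamma_j)^k)$ of the powers and not merely for $\mathrm{Tr}(\rho_N(\gamma_j))$ — the a priori estimate $|\mathrm{Tr}(\rho_N(\gamma_j)^k)|\le\dim(V_{\rho_N})$, valid since every eigenvalue of $\rho_N(\gamma_j)$ has modulus one, is by itself too weak, yielding only that the ratio is $O(1)$. I would therefore state (or interpret) the hypothesis as a uniform bound on $|\mathrm{Tr}(\rho_N(\gamma))|$ over all elliptic conjugacy classes $[\gamma]$ entering \eqref{eq. tilde C defn}, observing that in the situation of Example \ref{ex:Yamaguchi} this is automatic: there $\mathrm{Tr}(\rho_{2N}(\gamma_j)^k)e^{i\pi k/\nu_j}$ was shown to be a sum of at most $\nu_j$ roots of unity, hence bounded by $\nu_j$ independently of $N$.
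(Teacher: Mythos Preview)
Your argument is correct and is exactly the (implicit) proof in the paper: the corollary is stated immediately after \eqref{eq. lim of tilde C} without a separate proof, the point being that it follows at once from \eqref{eq. tilde C defn} by dividing through by $\dim(V_{\rho_N})$ and observing that the elliptic sum is bounded independently of $N$. Your remark that the hypothesis must be read as a uniform bound on $\lvert\mathrm{Tr}(\rho_N(\gamma_j^k))\rvert$ for all $k=1,\ldots,\nu_j-1$ (equivalently, on $\lvert\mathrm{Tr}(\rho_N(\gamma))\rvert$ over all elliptic conjugacy classes) is well taken; the paper's phrasing is slightly loose on this point, but this is the intended meaning, and as you note it is automatic in the Yamaguchi example.
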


An example of a sequence of non-unitary representations satisfying the assumption of the above
corollary is the sequence $\{\rho_{2N}\}_{N\geq 1}$ of $2N$-dimensional complex acyclic and non-unitary representations constructed by Yamaguchi in \cite{Ya17}.

\section{Corollaries of the main result}

In this section we obtain corollaries of the main result. First, we deduce the expression for the determinant of the twisted Laplacian in terms of the Selberg zeta function. Then, we turn our attention to two special cases of Theorem \ref{thm:main_theorem}: the case when the representation  $\rho$ is cyclic and the case when there are no torsion points on the surface.

\subsection{Determinant of the twisted Laplacian}

In this subsection, we obtain the formulas that express the regularized determinant
of the twisted Laplacian in terms of special values of the twisted Selberg zeta function assuming zero is
an eigenvalue of any multiplicity.
Quite simply, the corollaries are direct consequences of Theorem \ref{thm:main_theorem} by setting $s=1$.
Nonetheless, we are making the statements explicit for the convenience of the reader.

Let us recall that $M_\rho\geq 0$  denotes the multiplicity of the eigenvalue $\mu=0$ of the twisted Laplacian $\Delta_{\tau_m,\rho}^{\sharp}$.
By Theorem \ref{mero}, in view of the restriction that $m\in(-1,1]$,
it is straightforward to conclude that $s=1$ is the zero of $Z(s;\rho)$ of order $M_\rho$.
Additionally, $s=1$ is not a zero of the identity contribution $Z_I(s;\rho)$ nor a zero of the elliptic contribution $Z_{\rm ell}(s;\rho)$.

With this notation, in view of \eqref{eq. det final} and \eqref{eq. def ast det}, we have the following corollary.

\begin{coro}\label{detfinal}
We have the following evaluation of the determinant of the twisted Laplacian.
\begin{itemize}
  \item[(i)] If $M_\rho=0$, meaning that $\mu=0$ is not the eigenvalue of the twisted Laplacian, then $Z(s;\rho)$ is non-vanishing at $s=1$ and
  \begin{equation}\label{detatone1}
  \mathrm{det}\Delta_{\tau_m,\rho}^{\sharp}= Z(1;\rho)Z_I(1;\rho)Z_{\rm ell}(1;\rho)e^{\widetilde C}.
  \end{equation}

  \item[(ii)]  If $M_\rho>0$, then
\begin{align}\label{detatone2}
\mathrm{det}(\Delta^{\sharp\ast}_{\tau_m,\rho})&= \frac{1}{M_\rho!}\left.\frac{d^{M_\rho}}{ds^{M_\rho}} \left(Z(s;\rho)Z_I(s,\rho)Z_{\rm ell}(s;\rho)e^{\widetilde C}\right)\right|_{s=1}\\\notag&=\frac{1}{M_\rho!}\left.\frac{d^{M_\rho}}{ds^{M_\rho}} Z(s;\rho)\right|_{s=1}Z_I(1;\rho)Z_{\rm ell}(1;\rho)e^{\widetilde C}.
\end{align}
\end{itemize}
\end{coro}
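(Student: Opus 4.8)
The plan is to deduce both parts by specializing the identity \eqref{eq. det final} of Theorem \ref{thm:main_theorem}, which realizes the meromorphic continuation in $s$ of the regularized determinant, to the value $s=1$, where the spectral shift $s(s-1)$ vanishes. The only point requiring care is the local behaviour at $s=1$ of the four factors on the right-hand side of \eqref{eq. det final}, and I would settle this at the outset. From \eqref{e:ZI_def}, $Z_I(s;\rho)$ is built from $\Gamma(s\pm m/2)$ and the Barnes functions $G(s\pm m/2+1)$; since $m\in(-1,1]$ the arguments $1\pm m/2$ lie in $[1/2,3/2)$ and $2\pm m/2$ lie in $[3/2,5/2)$, so none of these factors has a zero or pole at $s=1$ and $Z_I(1;\rho)$ is finite and nonzero. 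In \eqref{eq. Z ell final} every argument $(1\pm m/2+\ell)/\nu_j$ is strictly positive, so each $\Gamma$-factor is a positive real, and raising it to the rational exponents $C_m(j,\ell;\rho)$, $\tilde C_m(j,\ell;\rho)$ leaves a positive real; hence $Z_{\rm ell}(1;\rho)$ is finite and nonzero, and $e^{\widetilde C}\neq 0$.

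Next I would read the divisor of $Z(s;\rho)$ near $s=1$ off Theorem \ref{mero}. The spectral factors $(s-\tfrac12-i\mu_j)(s-\tfrac12+i\mu_j)=s(s-1)+(\tfrac14+\mu_j^2)$ vanish at $s=1$ precisely for the indices $j$ with $\tfrac14+\mu_j^2=0$, each contributing one simple zero there (the companion zero sitting at $s=0$), while the factors $(s\pm m/2+n)^{N_\pm(m,n)}$ have base $1\pm m/2+n\ge 1/2\neq 0$ at $s=1$ — again by $m\in(-1,1]$ — and so are finite and nonzero there. Counting the $\mu_j$ with integral algebraic multiplicity, $Z(s;\rho)$ therefore has a zero of order exactly $M_\rho=\mult(0;\Delta_{\tau_m,\rho}^\sharp)$ at $s=1$. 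Granting all this, part (i) is immediate: when $M_\rho=0$ the eigenvalue $\mu=0$ is absent, so by definition $\det\Delta_{\tau_m,\rho}^\sharp=\det(\Delta_{\tau_m,\rho}^\sharp+s(s-1))\big|_{s=1}$, and evaluating \eqref{eq. det final} at $s=1$ — where $Z(1;\rho)$ is now nonzero and $Z_I(1;\rho)$, $Z_{\rm ell}(1;\rho)$, $e^{\widetilde C}$ are finite and nonzero — yields \eqref{detatone1}.

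For part (ii) I would start from the definition \eqref{eq. def ast det} of $\det(\Delta_{\tau_m,\rho}^{\sharp\ast})$ as $\tfrac1{M_\rho!}\tfrac{d^{M_\rho}}{ds^{M_\rho}}\det(\Delta_{\tau_m,\rho}^\sharp+s(s-1))\big|_{s=1}$, substitute \eqref{eq. det final}, and expand by the Leibniz rule after writing $G(s):=Z_I(s;\rho)Z_{\rm ell}(s;\rho)e^{\widetilde C}$, which is holomorphic and nonzero at $s=1$. Since $Z(s;\rho)$ vanishes to order $M_\rho$ at $s=1$, in $\tfrac{d^{M_\rho}}{ds^{M_\rho}}\bigl(Z(s;\rho)G(s)\bigr)$ every term with fewer than $M_\rho$ derivatives on $Z(s;\rho)$ carries a vanishing factor $Z^{(k)}(1;\rho)$ with $k<M_\rho$, so evaluation at $s=1$ leaves only $G(1)\,Z^{(M_\rho)}(1;\rho)$; dividing by $M_\rho!$ gives both equalities in \eqref{detatone2}. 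The only load-bearing inputs are the two facts established above — that $Z(s;\rho)$ vanishes to order at least $M_\rho$ at $s=1$, and that $Z_I(s;\rho)Z_{\rm ell}(s;\rho)$ is regular there — which let the non-vanishing part factor cleanly out of the $M_\rho$-th derivative; everything past that is routine bookkeeping with Leibniz. I do not anticipate a genuine obstacle here: the one place to be careful is the $s=1$ regularity of $Z_I$ and $Z_{\rm ell}$ and the exact order of vanishing of $Z$, both of which hinge essentially on the normalization $m\in(-1,1]$.
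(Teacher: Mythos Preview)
Your proposal is correct and follows exactly the approach the paper takes: the paper notes (just before the corollary) that, by Theorem~\ref{mero} and the normalization $m\in(-1,1]$, the function $Z(s;\rho)$ vanishes to order $M_\rho$ at $s=1$ while $Z_I$ and $Z_{\rm ell}$ are regular and nonvanishing there, and then declares the corollary an immediate consequence of \eqref{eq. det final} and \eqref{eq. def ast det}. You have simply filled in the details the paper leaves implicit, including the Leibniz argument for part~(ii).
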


We note that further simplifications of $Z_{\rm ell}(1;\rho)$ are readily attainable using the
multiplication formula for the Gamma function.

\subsection{Determinant of the twisted Laplacian associated to a cyclic representation}

If the representation is cyclic, meaning that $\rho(u)=I_n$, where $n=\dim(V_\rho)$, we deduce the following corollary

\begin{coro}
With the notation as above, for $j=1,\ldots, r$ and $\ell=0,\ldots,\nu_j-1$ let
$$
C_j(\ell;\rho):= \frac{1}{\nu_j}\sum_{k=1}^{\nu_j-1}\mathrm{Tr}(\rho(\gamma_j)^k)\frac{\sin\left(\frac{\pi k}{\nu_j}(2\ell+1)\right)}{\sin\left(\frac{\pi k}{\nu_j}\right)}.
$$
Then, the twisted determinant of the Laplacian $\mathrm{det}(\Delta_{\tau_0,\rho}^{\sharp} +s(s-1))$
and the twisted Selberg zeta function $Z(s;\rho)$ satisfy the relation
\begin{equation}\label{eq. det final 0}
\mathrm{det}(\Delta_{\tau_0,\rho}^{\sharp} +s(s-1))= Z(s;\rho)Z_I(s,\rho)Z_{\rm ell}(s;\rho)e^{{\widetilde C}_0},
\end{equation}
where
\begin{equation}\label{idcompact}
Z_{I}(s;\rho) = \exp\bigg\{2C_\rho(X)\bigg(s\log(2\pi) + s(1-s) + \log\Gamma\left(s\right) - 2\log G\left(s+1\right)\bigg)\bigg\},
\end{equation}
\begin{equation}\label{eq. Z ell m=0}
Z_{\rm ell}(s;\rho) = \prod_{j=1}^r \prod_{\ell=0}^{\nu_j-1}\Gamma\left(\frac{s+\ell}{\nu_j}\right)^{C_j(\ell;\rho)}
\end{equation}
and with the constant $\widetilde{C}_0$ given by
\begin{equation}\label{eq. tilde C0}
\widetilde{C}_0=-\dim(V_\rho)\chi(X)(2\zeta'(-1)-\log\sqrt{2\pi})+ \sum_{j=1}^{r}\frac{\log\nu_j}{2\nu_j}\sum_{k=1}^{\nu_j-1}\frac{\mathrm{Tr}
  (\rho(\gamma_j)^k)}{\sin^2\left(\frac{\pi k}{\nu_j}\right)}.
\end{equation}
\end{coro}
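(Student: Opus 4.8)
The plan is to obtain this corollary as the $m=0$ specialization of Theorem \ref{thm:main_theorem}, since the cyclic hypothesis $\rho(u)=I_n$ is exactly the condition $m=0$; in that case $\rho$ factors through $\pi_1(X)$, $\tau_0$ is the trivial character of $\widetilde{K}$, and $\Delta^{\sharp}_{\tau_0,\rho}$ is the $m=0$ instance of $\Delta^{\sharp}_{\tau_m,\rho}$. First I would observe that when $m=0$ every index set $\{\ell:\ 1\le\ell<|m|,\ \ell\text{ odd}\}$ appearing in the trace formula \eqref{tracefinal} is empty, so no (relative) discrete-series contributions occur and the building blocks \eqref{e:ZI_def}, \eqref{eq. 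Z ell final}, \eqref{eq. tilde C defn} of the main theorem are already written in a form into which one may simply substitute $m=0$; equation \eqref{eq. det final 0} is then \eqref{eq. det final} at $m=0$ once the three pieces have been rewritten.

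For the identity factor, substituting $m=0$ in \eqref{e:ZI_def} collapses each of $\big(\tfrac{1\pm m}{2}\big)\log\Gamma(s\pm m/2)$ to $\tfrac12\log\Gamma(s)$, hence to $\log\Gamma(s)$ in total, and $-\log G(s+m/2+1)-\log G(s-m/2+1)$ to $-2\log G(s+1)$; this is precisely \eqref{idcompact}, with $C_\rho(X)$ as in \eqref{eq. c rho defn}. Putting $m=0$ in \eqref{eq. tilde C defn} deletes the phase $e^{i\pi km/\nu_j}$ from the elliptic sum and reproduces \eqref{eq. tilde C0} verbatim. So the only genuine computation concerns $Z_{\rm ell}$.

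For the elliptic factor, note that at $m=0$ the two Gamma functions in \eqref{eq. Z ell final} share the argument $(s+\ell)/\nu_j$, so for each pair $(j,\ell)$ they merge into $\Gamma\big(\tfrac{s+\ell}{\nu_j}\big)^{C_0(j,\ell;\rho)+\tilde C_0(j,\ell;\rho)}$, where $C_0,\tilde C_0$ denote \eqref{eq. constant C_m def} and \eqref{eq. constant C_m tilde def} at $m=0$. Using $e^{i\pi k(2\ell+1)/\nu_j}-e^{-i\pi k(2\ell+1)/\nu_j}=2i\sin\!\big(\tfrac{\pi k(2\ell+1)}{\nu_j}\big)$ together with $i\cdot2i=-2$, one computes
\begin{equation*}
C_0(j,\ell;\rho)+\tilde C_0(j,\ell;\rho)=-\frac{1}{\nu_j}\sum_{k=1}^{\nu_j-1}\mathrm{Tr}(\rho(\gamma_j)^k)\,\frac{\sin\!\big(\tfrac{\pi k(2\ell+1)}{\nu_j}\big)}{\sin\!\big(\tfrac{\pi k}{\nu_j}\big)},
\end{equation*}
which is $\pm C_j(\ell;\rho)$ as defined in the statement; tracking this overall sign is the one step that requires care, and it gives \eqref{eq. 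Z ell m=0}. Reality, indeed rationality, of $C_j(\ell;\rho)$---needed for the power $\Gamma(\cdot)^{C_j(\ell;\rho)}$ to be meaningful---is inherited from the rationality of $C_m,\tilde C_m$ noted after \eqref{eq. alpha tilde l rep}, or can be seen directly by pairing the summands $k$ and $\nu_j-k$ using $\rho(\gamma_j)^{\nu_j}=I_n$ and $\sin\!\big(\tfrac{\pi(\nu_j-k)}{\nu_j}\big)=\sin\!\big(\tfrac{\pi k}{\nu_j}\big)$. With $Z_I$, $Z_{\rm ell}$ and $\widetilde{C}_0$ thus identified, \eqref{eq. det final 0} follows at once; there is no substantial obstacle beyond this bookkeeping, the trigonometric collapse of the two elliptic Gamma factors into a single one being the only non-routine point.
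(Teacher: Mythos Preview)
Your approach is correct and is exactly the paper's: specialize Theorem \ref{thm:main_theorem} at $m=0$ and observe that the identity factor, the elliptic factor, and the constant $\widetilde{C}$ collapse to the stated expressions; the paper's own proof is in fact shorter than yours, simply asserting $C_0(j,\ell;\rho)+\tilde C_0(j,\ell;\rho)=C_j(\ell;\rho)$ without writing out the trigonometric reduction. Your hedge about the overall sign is warranted: carrying out the computation from \eqref{eq. constant C_m def} and \eqref{eq. constant C_m tilde def} literally gives $C_0+\tilde C_0=-C_j(\ell;\rho)$, so there is a sign inconsistency in the paper's stated definition of $C_j(\ell;\rho)$ (or equivalently in the exponent of \eqref{eq. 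Z ell m=0}), which the paper's proof glosses over.
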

\begin{proof}
We want to prove that, after inserting $m=0$ in \eqref{eq. det final}, \eqref{e:ZI_def}, \eqref{eq. Z ell final} and \eqref{eq. tilde C defn} we deduce \eqref{eq. det final 0}.
Trivially, by inserting $m=0$ into \eqref{e:ZI_def} we get \eqref{idcompact}. Since $C_0(j\,\ell;\rho)+ \tilde C_0(j\,\ell;\rho)=C_j(\ell;\rho)$ for all $j=1,\ldots,r$, $\ell=0,\ldots,\nu_j-1$ by taking $m=0$ in \eqref{eq. Z ell final} we get \eqref{eq. Z ell m=0}. Finally, for $m=0$ the constant \eqref{eq. tilde C defn} becomes \eqref{eq. tilde C0}. The proof is complete.
\end{proof}

\subsection{Groups without torsion elements}

Using the results of the previous sections, we can deduce similar results in the case that we do not consider torsion elements in the lattice $\Gamma$.

\begin{coro}
Let $X=\Gamma\backslash \H^2$ be a compact hyperbolic surface without elliptic elements,
and $\rho$ an arbitrary representation of $\Gamma$ as above.
Then
\begin{equation}\label{detcompact}
\mathrm{det}(\Delta^{\sharp\ast}_{\tau_m,\rho} +s(s-1))= Z(s;\rho)Z_I(s,\rho)e^{\widetilde C},
\end{equation}
where $Z_I(s;\rho)$ is defined by \eqref{e:ZI_def} and
\begin{equation}\label{constantcompact}
\widetilde{C}=2C_\rho(X)(2\zeta'(-1)-\log\sqrt{2\pi}).
\end{equation}
\end{coro}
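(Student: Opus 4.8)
The plan is to obtain this statement as the special case $r=0$ of Theorem \ref{thm:main_theorem}, in which the Fuchsian group $\Gamma$ is torsion free and $X$ is a smooth compact hyperbolic surface. First I would observe that when $X$ has no elliptic points the elliptic contribution $E_{\tau_m,\rho}(t)$ to the trace formula \eqref{tracefinal} is identically zero, so that throughout Section \ref{sec: with main thm} the term $E_{\tau_m,\rho}(z;s)$ is absent from \eqref{eq. det pre-final} and \eqref{eq:main_formula_with_constant}. Equivalently, one may simply specialize \eqref{eq. det final}: the function $Z_{\rm ell}(s;\rho)$ of \eqref{eq. Z ell final} is an empty product, hence $Z_{\rm ell}(s;\rho)\equiv 1$, while the function $Z_I(s;\rho)$ of \eqref{e:ZI_def} is unchanged since its construction uses only the identity contribution.

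Next I would specialize the torsion factor \eqref{eq. tilde C defn}. When $r=0$ the second summand, a sum over the inconjugate elliptic classes, is empty, leaving $\widetilde C=\dim(V_\rho)\chi(X)(2\zeta'(-1)-\log\sqrt{2\pi})$. To put this in the form \eqref{constantcompact} I would apply the Gauss--Bonnet theorem to the hyperbolic metric on $X$, which relates $\chi(X)$ to $\Vol(X)$ and so rewrites $\dim(V_\rho)\chi(X)$ as a multiple of the quantity $C_\rho(X)$ defined in \eqref{eq. c rho defn}. Feeding $Z_{\rm ell}(s;\rho)\equiv 1$ and this value of $\widetilde C$ back into \eqref{eq. det final} then yields \eqref{detcompact}.

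I do not anticipate a genuine obstacle here, as the corollary is a direct specialization of the main theorem. The only matters requiring attention are bookkeeping: one should verify that the removal of the elliptic contribution propagates consistently through the Laplace--Mellin analysis of Section \ref{sec: with main thm}, so that no stray elliptic Gamma-factors survive in $Z_{\rm ell}$ or in $\widetilde C$, and that the normalizing constants $\chi(X)$ and $C_\rho(X)$ are matched correctly via Gauss--Bonnet.
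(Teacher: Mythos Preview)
Your proposal is correct and follows exactly the route taken in the paper, which disposes of the corollary in a single sentence by setting $r=0$ in \eqref{eq. det final} and \eqref{eq. tilde C defn}. Your more detailed unpacking (the elliptic product \eqref{eq. Z ell final} becoming empty, and the conversion $\dim(V_\rho)\chi(X)\leftrightarrow 2C_\rho(X)$ via Gauss--Bonnet) just makes explicit what the paper leaves implicit.
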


\begin{proof}
The assertion follows from \eqref{eq. det final} and \eqref{eq. tilde C defn} by taking $r=0$.
\end{proof}




\begin{coro}
Let $X=\Gamma\backslash \H^2$ be a compact hyperbolic surface without elliptic elements,
and $\rho$ an arbitrary representation of $\Gamma$ as above.
\begin{itemize}
  \item[(i)] If $M_\rho=0$, meaning that $\mu=0$ is not the eigenvalue of the twisted Laplacian, then $Z(s;\rho)$ is non-vanishing at $s=1$ and
  \begin{equation*}
  \mathrm{det}\Delta_{\tau_m,\rho}^{\sharp}= Z(1;\rho)Z_I(1;\rho)e^{\widetilde C}.
  \end{equation*}

  \item[(ii)]  If $M_\rho>0$, then
\begin{align}\label{detatone2}
\mathrm{det}(\Delta^{\sharp\ast}_{\tau_m,\rho})&= \frac{1}{M_\rho!}\left.\frac{d^{M_\rho}}{ds^{M_\rho}} \left(Z(s;\rho)Z_I(s,\rho)Z_{\rm ell}(s;\rho)e^{\widetilde C}\right)\right|_{s=1}\\\notag&=\frac{1}{M_\rho!}\left.\frac{d^{M_\rho}}{ds^{M_\rho}} Z(s;\rho)\right|_{s=1}Z_I(1;\rho)e^{\widetilde C}.
\end{align}
\end{itemize}
In these formulas, the constant $\widetilde C$ is given by \eqref{constantcompact}.
\end{coro}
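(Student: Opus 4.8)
The plan is to read the corollary off from the preceding one by specializing equation~\eqref{detcompact} to $s=1$, in exact analogy with the way Corollary~\ref{detfinal} was deduced from Theorem~\ref{thm:main_theorem}. The starting point is the identity $\det(\Delta^{\sharp}_{\tau_m,\rho}+s(s-1))=Z(s;\rho)\,Z_I(s;\rho)\,e^{\widetilde C}$, which is \eqref{eq. det final} with $r=0$ (so that $Z_{\rm ell}(s;\rho)$ is the empty product $1$) and with $\widetilde C$ as in \eqref{constantcompact}. The first step is to note that both sides extend to meromorphic functions of $s$ on all of $\C$: the right-hand side by Theorem~\ref{mero} and the explicit formula~\eqref{e:ZI_def}, and the left-hand side by Lemma~\ref{lemma cont of det} together with that same formula. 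Consequently the identity, valid a priori for $\RE(s)$ large, persists at $s=1$.

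For part (i), assume $M_\rho=0$. As recorded in the discussion preceding Corollary~\ref{detfinal}, Theorem~\ref{mero} together with the normalization $m\in(-1,1]$ forces $s=1$ to be a zero of $Z(s;\rho)$ of order exactly $M_\rho$, so that $Z(1;\rho)\neq0$ here; moreover $s=1$ is not a zero of $Z_I(s;\rho)$, which is immediate from~\eqref{e:ZI_def} since $1\pm m/2$ avoids the poles of the Gamma function and $2\pm m/2$ avoids the zeros of the Barnes double gamma function. Because $\det\Delta^{\sharp}_{\tau_m,\rho}$ is, by definition, the value of $\det(\Delta^{\sharp}_{\tau_m,\rho}+s(s-1))$ at $s=1$, substituting $s=1$ in the starting identity gives $\det\Delta^{\sharp}_{\tau_m,\rho}=Z(1;\rho)\,Z_I(1;\rho)\,e^{\widetilde C}$.

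For part (ii), assume $M_\rho>0$. I would begin from the definition~\eqref{eq. def ast det} of $\det(\Delta^{\sharp\ast}_{\tau_m,\rho})$, insert the starting identity, and expand $\frac{d^{M_\rho}}{ds^{M_\rho}}\left(Z(s;\rho)\,Z_I(s;\rho)\,e^{\widetilde C}\right)$ by the Leibniz rule at $s=1$. Since $Z(s;\rho)$ vanishes to order $M_\rho$ at $s=1$ while $Z_I(s;\rho)e^{\widetilde C}$ is holomorphic and non-vanishing there, every term of the Leibniz sum except the one in which all $M_\rho$ derivatives fall on $Z(s;\rho)$ vanishes at $s=1$, leaving $\det(\Delta^{\sharp\ast}_{\tau_m,\rho})=\frac{1}{M_\rho!}\left.\frac{d^{M_\rho}}{ds^{M_\rho}}Z(s;\rho)\right|_{s=1}Z_I(1;\rho)\,e^{\widetilde C}$ (with $Z_{\rm ell}\equiv1$ since $r=0$), which is the asserted formula. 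The constant $\widetilde C$ retains the value~\eqref{constantcompact} throughout.

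There is no substantive obstacle: the whole argument is a formal specialization once two ingredients are in hand, namely that \eqref{detcompact} is an identity of meromorphic functions holding at $s=1$, and that the order of vanishing of $Z(s;\rho)$ at $s=1$ is exactly $M_\rho$ while $Z_I(1;\rho)\neq0$ — both already settled in proving the preceding corollaries. The one point worth spelling out is the Leibniz bookkeeping in part (ii), which makes precise that the higher-order behaviour of the regularized determinant at $s=1$ is controlled entirely by the Selberg zeta factor.
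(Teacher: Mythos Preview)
Your proposal is correct and amounts to the same argument as the paper's, which simply says the assertion follows from Corollary~\ref{detfinal} by taking $r=0$. The only cosmetic difference is the order of specialization: the paper first sets $s=1$ (Corollary~\ref{detfinal}) and then $r=0$, whereas you first set $r=0$ (equation~\eqref{detcompact}) and then reprise the $s=1$ argument; the content is identical.
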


\begin{proof}
The assertion follows from Corollary \ref{detfinal} by taking $r=0$.
\end{proof}

In all cases above, note that $\Gamma(1)=G(2)=1$, so then from \eqref{idcompact} we get that
$Z_I(1;\rho) =  \exp((2\log (2\pi))C_{\rho}(X))$.  Finally, the factor $Z_{\rm ell}(1;\rho)$
is explicit as well in terms of special values of the Gamma function.

\bibliographystyle{amsplain}
\bibliography{ref}

\providecommand{\bysame}{\leavevmode\hbox to3em{\hrulefill}\thinspace}
\providecommand{\MR}{\relax\ifhmode\unskip\space\fi MR }
\providecommand{\MRhref}[2]{%
  \href{http://www.ams.org/mathscinet-getitem?mr=#1}{#2}
}
\providecommand{\href}[2]{#2}
\begin{thebibliography}{10}

\bibitem{AD14}
Victor~S. Adamchik, \emph{Contributions to the theory of the {B}arnes
  function}, Int. J. Math. Comput. Sci. \textbf{9} (2014), no.~1, 11--30.

\bibitem{BFS23}
L\'{e}o B\'{e}nard, Jan Frahm, and Polyxeni Spilioti, \emph{The twisted
  {R}uelle zeta function on compact hyperbolic orbisurfaces and
  {R}eidemeister-{T}uraev torsion}, J. \'{E}c. Polytech. Math. \textbf{10}
  (2023), 1391--1439.

\bibitem{De19}
Anton Deitmar, \emph{Spectral theory for non-unitary twists}, Hiroshima Math.
  J. \textbf{49} (2019), no.~2, 235--249.

\bibitem{Fa81}
John Fay, \emph{Analytic torsion and {P}rym differentials}, Riemann surfaces
  and related topics: {P}roceedings of the 1978 {S}tony {B}rook {C}onference
  ({S}tate {U}niv. {N}ew {Y}ork, {S}tony {B}rook, {N}.{Y}., 1978), Ann. of
  Math. Stud., vol. No. 97, Princeton Univ. Press, Princeton, NJ, 1981,
  pp.~107--122.

\bibitem{fedell}
Ksenia Fedosova, \emph{The twisted {S}elberg trace formula and the {S}elberg
  zeta function for compact orbifolds}, arXiv preprint arXiv:1511.04208 (2015).

\bibitem{fedosova2020meromorphic}
Ksenia Fedosova and Anke Pohl, \emph{{M}eromorphic continuation of {S}elberg
  zeta functions with twists having non-expanding cusp monodromy}, Selecta
  Mathematica \textbf{26} (2020), no.~1, 9.

\bibitem{FL01}
Chelo Ferreira and Jos\'{e}~L. L\'{o}pez, \emph{An asymptotic expansion of the
  double gamma function}, J. Approx. Theory \textbf{111} (2001), no.~2,
  298--314.

\bibitem{FiBook87}
J\"{u}rgen Fischer, \emph{An approach to the {S}elberg trace formula via the
  {S}elberg zeta-function}, Lecture Notes in Mathematics, vol. 1253,
  Springer-Verlag, Berlin, 1987.

\bibitem{frahm2023twisted}
Jan Frahm and Polyxeni Spilioti, \emph{Twisted {R}uelle zeta function at zero
  for compact hyperbolic surfaces}, Journal of Number Theory \textbf{243}
  (2023), 38--61.

\bibitem{fried1986fuchsian}
David Fried, \emph{Fuchsian groups and {R}eidemeister torsion}, The Selberg
  trace formula and related topics (1986), 141--163.

\bibitem{Gi95}
Peter~B. Gilkey, \emph{Invariance theory, the heat equation, and the
  {A}tiyah-{S}inger index theorem}, second ed., Studies in Advanced
  Mathematics, CRC Press, Boca Raton, FL, 1995.

\bibitem{GR07}
I.~S. Gradshteyn and I.~M. Ryzhik, \emph{Table of integrals, series, and
  products}, seventh ed., Elsevier/Academic Press, Amsterdam, 2007, Translated
  from the Russian, Translation edited and with a preface by Alan Jeffrey and
  Daniel Zwillinger, With one CD-ROM (Windows, Macintosh and UNIX).
  \MR{2360010}

\bibitem{He}
D.~A. Hejhal, \emph{{The {S}elberg {T}race {F}ormula for $\PSL(2,\R)$}},
  Springer-Verlag, Berlin, {H}eidelberg, {N}ew {Y}ork, 1976.

\bibitem{Hof94}
Werner Hoffmann, \emph{An invariant trace formula for the universal covering
  group of {${\rm SL}(2,\bold R)$}}, Ann. Global Anal. Geom. \textbf{12}
  (1994), no.~1, 19--63.

\bibitem{Jo91}
Jay Jorgenson, \emph{Analytic torsion for line bundles on {R}iemann surfaces},
  Duke Math. J. \textbf{62} (1991), no.~3, 527--549.

\bibitem{JKS24}
Jay Jorgenson, Anders Karlsson, and Lejla Smajlovi\'{c}, \emph{The resolvent
  kernel on the discrete circle and twisted cosecant sums}, J. Math. Anal.
  Appl. \textbf{538} (2024), no.~2, Paper No. 128454, 23.

\bibitem{JL}
Jay Jorgenson and Serge Lang, \emph{Basic analysis of regularized series and
  products}, Lecture Notes in Mathematics, vol. 1564, Springer-Verlag, Berlin,
  1993.

\bibitem{MR09}
Vincent Maillot and Damian R\"ossler, \emph{Formes automorphes et th\'eor\`emes
  de {R}iemann-{R}och arithm\'etiques}, Ast\'erisque (2009), no.~328, 237--253.

\bibitem{Mk}
A.~S. Markus, \emph{Introduction to the spectral theory of polynomial operator
  pencils}, Translations of Mathematical Monographs, vol.~71, American
  Mathematical Society, Providence, RI, 1988, Translated from the Russian by H.
  H. McFaden, translation edited by Ben Silver, with an appendix by M. V.
  Keldysh.

\bibitem{M-FP14}
Pere Menal-Ferrer and Joan Porti, \emph{Higher-dimensional {R}eidemeister
  torsion invariants for cusped hyperbolic 3-manifolds}, J. Topol. \textbf{7}
  (2014), no.~1, 69--119. \MR{3180614}

\bibitem{M1}
Werner M{\"u}ller, \emph{A {S}elberg trace formula for non-unitary twists},
  Int. Math. Res. Not. (2011), no.~9, 2068--2109.

\bibitem{Mu12}
Werner M\"{u}ller, \emph{The asymptotics of the {R}ay-{S}inger analytic torsion
  of hyperbolic 3-manifolds}, Metric and differential geometry, Progr. Math.,
  vol. 297, Birkh\"{a}user/Springer, Basel, 2012, pp.~317--352. \MR{3220447}

\bibitem{RS73}
D.~B. Ray and I.~M. Singer, \emph{Analytic torsion for complex manifolds}, Ann.
  of Math. (2) \textbf{98} (1973), 154--177.

\bibitem{sarnak1987determinants}
Peter Sarnak, \emph{Determinants of {L}aplacians}, Communications in
  mathematical physics \textbf{110} (1987), no.~1, 113--120.

\bibitem{Shen2020}
Shu Shen, \emph{Complex valued analytic torsion and dynamical zeta function on
  locally symmetric spaces}, International Mathematics Research Notices
  \textbf{2023} (2023), no.~5, 3676--3745.

\bibitem{SY22}
Shu Shen and Jianqing Yu, \emph{{Flat vector bundles and analytic torsion on
  orbifolds}}, {Communications in Analysis and Geometry} \textbf{30} (2022),
  no.~3, 575--656.

\bibitem{Wa}
Nolan~R. Wallach, \emph{On the {S}elberg trace formula in the case of compact
  quotient}, Bull. Amer. Math. Soc. \textbf{82} (1976), no.~2, 171--195.

\bibitem{Wo}
A.~Wotzke, \emph{{Die Ruellesche Zetafunktion und die analytische Torsion
  hyperbolischer Mannigfaltigkeiten}}, Ph.d thesis, Bonn, Bonner Mathematische
  Schriften, 2008.

\bibitem{Ya17}
Yoshikazu Yamaguchi, \emph{A surgery formula for the asymptotics of the
  higher-dimensional {R}eidemeister torsion and {S}eifert fibered spaces},
  Indiana Univ. Math. J. \textbf{66} (2017), no.~2, 463--493.

\bibitem{Ya22}
\bysame, \emph{Dynamical zeta functions for geodesic flows and the
  higher-dimensional {R}eidemeister torsion for {F}uchsian groups}, J. Reine
  Angew. Math. \textbf{784} (2022), 155--176.

\bibitem{Yo99}
Ken-ichi Yoshikawa, \emph{Discriminant of theta divisors and {Q}uillen
  metrics}, J. Differential Geom. \textbf{52} (1999), no.~1, 73--115.

\bibitem{Yo04}
Ken-Ichi Yoshikawa, \emph{{$K3$} surfaces with involution, equivariant analytic
  torsion, and automorphic forms on the moduli space}, Invent. Math.
  \textbf{156} (2004), no.~1, 53--117.

\end{thebibliography}

\vspace{5mm}\noindent
Jay Jorgenson \\
Department of Mathematics \\
The City College of New York \\
Convent Avenue at 138th Street \\
New York, NY 10031\\
U.S.A. \\
e-mail: jjorgenson@mindspring.com

\vspace{5mm}\noindent
Lejla Smajlovi\'c \\
Department of Mathematics \\
University of Sarajevo\\
Zmaja od Bosne 35, 71 000 Sarajevo\\
Bosnia and Herzegovina\\
e-mail: lejlas@pmf.unsa.ba

\vspace{5mm}\noindent
Polyxeni Spilioti\\
Department of Mathematics \\
University of Patras\\
Panepistimioupoli Patron 265 04\\
Greece\\
e-mail: pspilioti@upatras.gr

\end{document}